\numberwithin{equation}{section}
\newcommand{\R}{{\mathbb R}}
\newcommand{\be}{\begin{eqnarray}}
\newcommand{\ben}{\begin{eqnarray*}}
\newcommand{\en}{\end{eqnarray}}
\newcommand{\enn}{\end{eqnarray*}}
\newcommand{\pa}{\partial}
\newtheorem{theorem}{Theorem}[section]
\newtheorem{lemma}[theorem]{Lemma}
\newtheorem{corollary}[theorem]{Corollary}
\newtheorem{remark}[theorem]{Remark}
\newtheorem{proposition}[theorem]{Proposition}
\definecolor{rot}{rgb}{0.000,0.000,0.000}
\definecolor{rot1}{rgb}{0.000,0.000,0.000}
\newcommand{\jp}[1]{{\color{black}#1}}
\newcommand{\tao}[1]{{\color{black}#1}}
\begin{document}
\renewcommand{\theequation}{\arabic{section}.\arabic{equation}}
\begin{titlepage}
\title{Regularized boundary integral equation methods for open-arc scattering problems in thermoelasticity}
\author{Yixuan X. Kong\thanks{School of Mathematical Science, University of Chinese Academy of Sciences, China; State Key Laboratory of Mathematical Sciences and Institute of Computational Mathematics and Scientific/Engineering Computing, Academy of Mathematics and System Sciences, Chinese Academy of Sciences, Beijing, 100190, China. Email: {\tt kongxiangyixuan24@mails.ucas.ac.cn}}\;, 
Jos\'{e} Pinto\thanks{Facultad de Ingenier\'{i}a y Ciencias, Universidad Adolfo Ib\'a\~{n}ez, Santiago, Chile. Email: {\tt jose.pinto@uai.cl}}\;,
Tao Yin\thanks{State Key Laboratory of Mathematical Sciences and Institute of Computational Mathematics and Scientific/Engineering Computing, Academy of Mathematics and Systems Science, Chinese Academy of Sciences, Beijing 100190, China. Email: {\tt yintao@lsec.cc.ac.cn}}}

\end{titlepage}
\maketitle
%\vspace{.2in}

\begin{abstract}
This paper devotes to developing novel boundary integral equation (BIE) solvers for the problem of thermoelastic scattering by open-arcs with four different boundary conditions in two dimensions. The proposed methodology is inspired by the Calder\'on formulas, whose eigenvalues are shown to accumulate at particular points depending only on Lam\'e parameters, satisfied by the thermoelastic boundary integral operators (BIOs) on both closed- and open-surfaces. Regularized BIEs in terms of weighted BIOs on open-arc that explicitly exhibits the edge singularity behavior, depending on the types of boundary conditions, of the unknown potentials are constructed to effectively reduce the required iteration number to solve the corresponding discretized linear systems. We implement the new formulations utilizing regularizations of singular integrals, which reduces the strongly- and hyper-singular integrals into weakly-singular integrals. Combined with spectrally accurate quadrature rules, numerical examples are presented to illustrate the accuracy and efficiency of the proposed solvers.

{\bf Keywords:} Thermoelastic scattering, boundary integral equation, edge singularity, spectrum
\end{abstract}

\section{Introduction}
\label{sec:1}

Developing accurate and efficient numerical solvers for wave scattering problems \cite{N01,CK13,HW08,L09,K13} in acoustics, elastodynamics, and electromagnetics remains challenging due to the highly oscillatory nature of the associated time-harmonic fields. Compared to volumetric discretization methods \cite{CB89,TSC06,WLXY}, such as the finite difference method and the finite element method, the boundary integral equation (BIE) method \cite{C00,HW08,BL12,LB15,BXY21,XY22,BL13,YHX17,ZXY21,BXY17,BXY19,CKM00,K95,BET12,L09,K13} offers two key advantages. First, it only requires the discretization of the boundary, reducing the problem's dimensionality. Second, the resulting integral representation naturally satisfies the radiation condition. Furthermore, the method can be combined with high-order discretization strategies and acceleration techniques \cite{BET12,CKM00,K95,L09} to achieve highly accurate numerical solutions, even at high frequencies.

In this work, we focus on the application of the BIE method for the thermoelastic problem on open-arcs, a problem of great importance in various engineering fields such as non-destructive testing, geophysics, and energy production (see \cite{E12,LB11,SIS16} for some applications). In contrast to closed boundary problems, where the BIE method is well understood \cite{DK88,CD98,N01,HW08,YHX17,ZXY21,BXY17,BXY19}, there are specific challenges associated with open-arcs. On one hand, the Calderón identities do not hold, making it impossible to obtain a second-kind formulation using the same approach as in the closed boundary case. Therefore, different techniques are needed to construct an appropriate preconditioner. On the other hand, the unknown density functions in the BIEs exhibit singular behavior near the edges of the open-arc, depending on the boundary conditions. This singular behavior poses considerable challenges to both the theoretical analysis and the numerical discretization of the BIOs. 

Recently, there have been numerous studies on open-arc and open-surface scattering problems aimed at establishing proper second-kind formulations. For instance, such formulations have been developed for elastic and acoustic waves in 2D \cite{BL12,LB15,BXY21,XY22} and for acoustic waves in 3D \cite{BL13}. The primary approach extends the results for closed boundaries by considering compositions of appropriately modified versions of the acoustic/elastic single-layer (denoted by $S^w$) and hyper-singular integral operators (denoted by $N^w$). The eigenvalues of the operator $N^w \circ S^w$
  were numerically computed and shown to remain bounded away from zero and infinity in \cite{BL12,BXY21}, with rigorous proofs provided in \cite{LB15,XY22}. Additionally, the referenced articles present numerical examples that demonstrate the effectiveness of this preconditioning strategy in reducing the number of iterations needed to solve the discrete systems.
As for spectral analysis of open-surface Calder\'on identities for other scattering problems, it still remains open.

We consider the elastic scattering problem by open arcs, where the influence of the temperature field cannot be neglected. In this situation, the Biot model can be employed to characterize the coupling effect between the elastic displacement field and the temperature field \cite{B56,B64,CD98,C00}. The mathematical theory of thermoelastic wave scattering by closed surfaces has been well developed in \cite{K79}, where four types of boundary conditions were considered. Furthermore, in \cite{C00}, the well-posedness of the BIEs for thermoelastic wave scattering by open surfaces, as well as the description of edge singularities for the four types of boundary conditions, were established. In this work, we focus on constructing second-kind formulations for thermoelastic wave scattering by open arcs.

Unlike other scattering problems, an additional challenge arises in constructing Calderón formulas on closed boundaries, as the double-layer operator 
$K$ and its adjoint 
$K^*$  are no longer compact. By analyzing the detailed structure of these operators, we show in Section \ref{sec:3.1} that the results from \cite{AJKKY18} can be used to establish the polynomial compactness of the double-layer operator and its adjoint. This, in turn, enables us to recover the second-kind formulation for the four types of boundary conditions on closed surfaces, as presented in Theorem~\ref{V12}.

Using the second-kind formulations for the closed boundary case and previous works \cite{BL12,LB15,BXY21,XY22} as inspiration, we construct new integral formulations by considering different compositions of weighted BIOs, where the exact combinations depend on the underlying boundary conditions.
Numerical examples confirm that the eigenvalues of the composite operators accumulate precisely at certain nonzero constants determined solely by the elastic medium parameters, suggesting that they are Fredholm operators of the second kind, even in high-frequency scenarios. We also present the detailed spectrum analysis for this regularized formulations. 
As a result, we observe that the new formulations require fewer GMRES iterations to achieve a given residual tolerance. The overall computing time, including the discretization of the BIOs, is also significantly reduced, except in the case of the pure Dirichlet problem, where the additional cost of assembling the new formulation outweighs the gains in solving the linear system.

Another unavoidable difficulty arising from the BIE method's implementation is the evaluation of strongly- and hyper-singular integrals defined in the sense of Cauchy principle value or Hadamard finite part, respectively. This paper adopts a method of establishing regularized formulations \cite{BXY21,YHX17,ZXY21}. 
By utilizing the techniques of G\"unter derivative and integration-by-parts, strongly- and hyper-singular integrals  on the closed-surface are transformed into a series of combination of weakly-singular integrals and tangential derivatives. 
For the open-arc case, the same results holds since the singular terms involved in the integration-by-parts calculation vanish at the end points. Furthermore, the resulted weakly-singular integrals can be discretized according to the Nystr\"om method \cite{CK13}, while the tangential derivatives can be calculated quickly using FFT. The proposed scheme offers the advantages of a simpler form and higher efficiency \cite{CKM00,K95}, in comparison with some alternative numerical treatments.   

The rest of this paper is organized as follows. Section~\ref{sec:2} introduces the thermoelastic scattering problem on the open-arc and derives four corresponding categories of BIEs. Section~\ref{sec:3.1} analyzes the spectral properties of the thermoelastic BIOs on the closed-surface, in which the appropriately composite operators are proven to be the second-kind Fredholm operators. Weighted BIOs and new regularized BIEs on the open-arc are presented in  Section~\ref{sec:3.2}, with their corresponding spectrum analysis in Section~\ref{sec:3.3}. Regularized representation formulas for the hyper-singular BIOs $V_i^w,i=2,3,4$ are then established in Section~\ref{sec:3.4}. Finally, in Section~\ref{sec:4.1} we present our numerical discretization strategy with high-order quadrature rules, and in  Section~\ref{sec:4.2} we show  ample numerical examples to demonstrate the high accuracy and efficiency of our algorithm.

\section{Preliminaries}
\label{sec:2}
\subsection{The thermoelastic scattering problem}
\label{sec:2.1}

Let $\Gamma$ denote a smooth open-arc in the plane $\mathbb{R}^2$. The
complement of $\Gamma$ in $\mathbb{R}^2$ is occupied by the medium of propagation, a linear isotropic and homogeneous thermoelastic medium of Biot type, characterized by the Lam\'e
constants $\lambda, \mu$ (satisfying $\mu>0$, $\lambda+\mu>0$),
the constant mass density $\rho>0$, the coefficient of thermal
diffusity $\kappa$ and the coupling constants $\gamma$ and
$\eta$ are given by
\ben
\gamma=(\lambda+\frac{2}{3}\mu)\,\alpha,\quad\eta=\frac{T_0\gamma}{\lambda_0},
\enn
respectively, where $\alpha$ is the volumetric thermal expansion coefficient,
$T_0$ is a reference value of the absolute temperature and $\lambda_0$
is the coefficient of thermal conductivity. Denote by $\epsilon$ :=$\gamma\eta\kappa/(\lambda+2\mu) $
the dimensionless thermoelastic coupling constant which assumes "small" positive values
for most thermoelastic media and $q=i\omega/\kappa$.
Suppressing the time-harmonic dependence
$e^{-i\omega t}$ in which $\omega>0$ is the frequency, the
displacement field $\bm u(\bm x)=(u_1(x_1,x_2), \,u_2(x_1,x_2))^{\top}$ and the temperature variation field $p(\bm x)=p(x_1,x_2)$ can be modeled by the following Biot
system of linearized thermoelasticity
\begin{numcases}{}
\label{navier}
\Delta^*\bm u+\rho\omega^2\bm u-\gamma\nabla p=0 \quad\mbox{in}\quad\R^2\backslash\Gamma,  \\
\label{navier2}
\Delta p+qp+i\omega\eta\nabla\cdot \bm u=0 \quad\mbox{in}\quad\R^2\backslash\Gamma.
\end{numcases}
Here $\Delta^*$ is the Lam\'e operator defined by
\ben
\Delta^* = \mu\,\mbox{div}\,\mbox{grad} + (\lambda + \mu)\,\mbox{grad}\, \mbox{div}\,. \enn
Rewriting (\ref{navier})-(\ref{navier2}) into a matrix form, we obtain
\be
\label{13}
\mathbb{L}\bm U=0, \quad \mathbb{L}=\begin{pmatrix}
  (\mu\Delta + \rho \omega ^2)I_2+(\lambda +\mu )\nabla\nabla\cdot & -\gamma \nabla \\
  q\eta \kappa \nabla\cdot & \Delta +q   \end{pmatrix},
  \quad \bm U=\begin{pmatrix} \bm u\\p \end{pmatrix}\,. \en
On $\Gamma$, four different kinds of boundary conditions \cite{K79,C00} taking the forms $\mathbb{B}_i(\pa,\bm\nu)\bm U=\bm G_i$ on $\Gamma$, $i=1,2,3,4$, will be considered.
\begin{itemize}
\item 1st-kind (Dirichlet-type):
\be \label{14}
\mathbb{B}_1(\pa,\bm\nu)=\mathbb{I}_3
\en
\item 2nd-kind (Neumann-type):
\be \label{15}
\mathbb{B}_2(\pa,\bm\nu)=\begin{pmatrix}
  \bm T(\pa,\bm\nu) & -\gamma\bm\nu \\ 0 & \pa_{\bm\nu}  \end{pmatrix} 
\en
\item 3rd-kind (mixed-type):
\be  \label{17} \mathbb{B}_3(\pa,\bm\nu):=\begin{pmatrix}
  \mathbb{I}_2 & 0 \\0 &\pa_{\bm\nu} \end{pmatrix} \en
\item 4th-kind (mixed-type):
  \be  \label{16} 
\mathbb{B}_4(\pa,\bm\nu):=\begin{pmatrix}
  \bm T(\pa,\bm\nu) & -\gamma\bm\nu \\0 & -1 \end{pmatrix}
 \en
\end{itemize}
Here, $\mathbb{I}_n$ denotes a $n\times n$ identity operator, $\bm T(\pa,\bm\nu)$ is the traction operator on the
boundary defined as \ben \bm T(\pa,\bm\nu)\bm u:=2\mu\,\pa_{\bm\nu}\,\bm u+\lambda \bm\nu\,\mbox{div}\bm u
+\mu\bm\nu\times\mbox{curl}\bm u, \qquad \bm\nu=(\nu_1,\nu_2){^\top}, \enn
in which $\bm\nu$ is the outward unit normal to the boundary $\Gamma$
and $\pa_{\bm\nu}=\bm\nu\cdot\nabla$ is the normal derivative. In addition, to ensure the uniqueness of the thermoelastic problem, the scattered field $\bm U$ is assumed to satisfy appropriate radiation conditions at infinity and we refer to \cite[Theorem 3.1]{C00} for the uniqueness of the four thermoelastic open-arc scattering problems.

\subsection{Boundary integral equations}
\label{sec:2.2}

Now we introduce the solution representations and BIEs associated with the considered thermoelastic scattering problems. Let $\mathbb{E}({\bm x},{\bm y})$ be the fundamental solution of the adjoint operator $\mathbb{L}^*$ of $\mathbb{L}$ in $\mathbb{R}^2$ given by
\ben \mathbb{E}(x,y)=\begin{pmatrix}
  \mathbb{E}_{11}({\bm x},{\bm y}) & \bm E_{12}({\bm x},{\bm y}) \\ \bm E_{21}^{\top}({\bm x},{\bm y}) & E_{22}({\bm x},{\bm y}) \end{pmatrix} ,\enn
with
\begin{align*}
&\mathbb{E}_{11}({\bm x},{\bm y})=\frac{1}{\mu}\gamma_{k_s}({\bm x},{\bm y})I+\frac{1}{\rho \omega^2}\nabla_x \nabla_x^{\top}
[\gamma_{k_s}({\bm x},{\bm y})-\frac{k_p^{2}-k_2^{2}}{k_1^{2}-k_2^{2}}\gamma_{k_1}({\bm x},{\bm y})+\frac{k_p^{2}-k_1^{2}}{k_1^{2}-k_2^{2}}\gamma_{k_2}({\bm x},{\bm y})], \\
&\bm E_{12}({\bm x},{\bm y})=\frac{i\omega\eta}{(k_1^{2}-k_2^{2})(\lambda+2\mu)}\nabla_x [\gamma_{k_1}({\bm x},{\bm y})-\gamma_{k_2}({\bm x},{\bm y})], \\
&\bm E_{21}({\bm x},{\bm y})=-\frac{\gamma}{(k_1^{2}-k_2^{2})(\lambda+2\mu)}\nabla_x[\gamma_{k_1}({\bm x},{\bm y})-\gamma_{k_2}({\bm x},{\bm y})], \\
&E_{22}({\bm x},{\bm y})=-\frac{1}{k_1^{2}-k_2^{2}}[(k_p^{2}-k_1^{2})\gamma_{k_1}({\bm x},{\bm y})-(k_p^{2}-k_2^{2})\gamma_{k_2}({\bm x},{\bm y})], \end{align*}
where $\gamma_k ({\bm x},{\bm y})$ denotes the fundamental solution of the Helmholtz equation in $\R^2$ with wavenumber $k$ which takes the form
\be \label{gak}  \gamma_k({\bm x},{\bm y})=\frac{i}{4}H_0^{(1)}(k|{\bm x}-{\bm y}|),\quad x\neq y.  \en
The wavenumbers $k_1$, $k_2$, corresponding to the elastothermal and thermoelastic
waves respectively, are the roots of the characteristic system
\be k_1^{2}+k_2^{2}=q(1+\epsilon )+k_p^{2}, \quad k_1^{2} k_2^{2}=q k_p^{2}, \quad \mu k_s^{2}=\rho \omega^2, \en
for which $k_p:=\omega \sqrt{\rho /(\lambda+2\mu)}$
is the wavenumber of the longitudinal wave in the absence of thermal coupling and
$k_s:=\omega \sqrt{\rho /\mu}$ is the wavenumber of the uncoupled transverse wave. In particular, $\mathrm{Re}\{k_j\}> 0$ and $\mathrm{Im}\{k_j\}> 0$ \cite{WLXY} for $j=1,2$.

Denote by $\mathbb{B}_i^*(\pa,\bm\nu)$ the adjoint operators of $\mathbb{B}_i(\pa,\bm\nu)$ for $i=1,2,3,4$ associated with the Biot system. It can be derived that
\ben
\mathbb{B}_1^*(\pa,\bm\nu)=\mathbb{B}_1(\pa,\bm\nu),\quad \mathbb{B}_2^*(\pa,\bm\nu):=\begin{pmatrix}
  \bm T(\pa,\bm\nu) & -i\omega \eta \bm\nu \\ 0 &\pa_{\bm \nu}
\end{pmatrix},\\
 \mathbb{B}_3^{*}(\pa,\bm\nu)=\mathbb{B}_3(\pa,\bm\nu), \quad
\mathbb{B}_4^{*}(\pa,\bm \nu):=\begin{pmatrix}
  \bm T(\pa,\bm\nu) & -i\omega \eta \bm\nu \\ 0& -1 \end{pmatrix}.
\enn
Then the solution to the open-arc thermoelastic scattering problems can be represented as
\be
\label{solrep}
\bm U(\bm x)=\mathcal{S}_i[\bm\psi_i](\bm x):=\int _{\Gamma}(\mathbb{B}_i^{*}(\pa_{\bm y},\bm\nu_{\bm y})\mathbb{E}({\bm x},{\bm y}))^{\top}\bm \psi_i ({\bm y})d s_{\bm y}, \quad  {\bm x}\in \R^2\backslash \Gamma, \quad i =1,2,3,4,
\en
with the corresponding unknown potentials $\bm\psi_i=(\bm \varphi_i,\psi_i)$ satisfying the BIE
\be
\label{BIE}
\bm V_i[\bm\psi_i]=\bm G_i\quad\mbox{on}\quad \Gamma,
\en
where $\bm V_i$ is a BIO defined as
\be
\label{operatorVi}
\bm V_i[\bm\psi_i](\bm x)=\int _{\Gamma}\mathbb{B}_i(\pa_{\bm x},\bm\nu_{\bm x})(\mathbb{B}_i^{*}(\pa_{\bm y},\bm\nu_{\bm y})\mathbb{E}({\bm x},{\bm y}))^{\top}\bm \psi_i ({\bm y})d s_{\bm y}, \quad  {\bm x}\in \Gamma, \quad i=1,2,3,4.
\en
\tao{Through out of this paper, the strongly- and hyper-singular boundary integrals appearing in $\bm V_i, i=2,3,4$ and the forthcoming BIOs in the following sections are defined in the sense of Cauchy principle value or Hadmard finite part, respectively.}

Assuming that $\Gamma\subset\partial\Omega$ where $\partial\Omega$ is a smooth
boundary of a bounded domain $\Omega$ in $\mathbb{R}^2$, we denote by $\widetilde H^s(\Gamma)$ the space of all $f\in H^s(\partial\Omega)$ such that $\mathrm{supp}(f)\subset\overline{\Gamma}$. Then it can be shown~\cite[Theorem 4.1]{C00} that $\bm V_1: \widetilde H^{-1/2}(\Gamma)^3\rightarrow H^{1/2}(\Gamma)^3$, $\bm V_2: \widetilde H^{1/2}(\Gamma)^3\rightarrow H^{-1/2}(\Gamma)^3$,  $\bm V_3: \widetilde H^{-1/2}(\Gamma)^2\times\widetilde H^{1/2}(\Gamma) \rightarrow H^{1/2}(\Gamma)^2\times H^{-1/2}(\Gamma)$ and $\bm V_
4: \widetilde H^{1/2}(\Gamma)^2\times\widetilde H^{-1/2}(\Gamma) \rightarrow H^{-1/2}(\Gamma)^2\times H^{1/2}(\Gamma)$ are bicontinuous, i.e., continuous and invertible.

\begin{theorem}
Suppose that $\bm G_1\in H^{1/2}(\Gamma)^3$, $\bm G_2\in H^{-1/2}(\Gamma)^3$, $\bm G_3\in H^{1/2}(\Gamma)^2\times H^{-1/2}(\Gamma)$ and $\bm G_4\in H^{-1/2}(\Gamma)^2\times H^{1/2}(\Gamma)$, the four BIEs admit unique solutions in $\widetilde H^{-1/2}(\Gamma)^3$, $\widetilde H^{1/2}(\Gamma)^3$, $\widetilde H^{-1/2}(\Gamma)^2\times\widetilde H^{1/2}(\Gamma)$ and $\widetilde H^{1/2}(\Gamma)^2\times\widetilde H^{-1/2}(\Gamma)$, respectively.
\end{theorem}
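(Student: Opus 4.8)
The plan is to read the statement off directly from the bicontinuity of the operators $\bm V_i$ recorded immediately above, which is \cite[Theorem 4.1]{C00}. Fix $i\in\{1,2,3,4\}$ and let $X_i$ and $Y_i$ denote the domain and range spaces appearing in the $i$-th mapping property, so that $\bm V_i\colon X_i\to Y_i$ is a linear homeomorphism; these $X_i$ are exactly the spaces $\widetilde H^{-1/2}(\Gamma)^3$, $\widetilde H^{1/2}(\Gamma)^3$, $\widetilde H^{-1/2}(\Gamma)^2\times\widetilde H^{1/2}(\Gamma)$, $\widetilde H^{1/2}(\Gamma)^2\times\widetilde H^{-1/2}(\Gamma)$ listed in the theorem, and the $Y_i$ are exactly the spaces in which $\bm G_1,\dots,\bm G_4$ are assumed to lie. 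The $i$-th BIE \eqref{BIE} is the operator equation $\bm V_i[\bm\psi_i]=\bm G_i$ with $\bm G_i\in Y_i$; surjectivity of $\bm V_i$ provides a solution $\bm\psi_i=\bm V_i^{-1}\bm G_i\in X_i$ and injectivity of $\bm V_i$ makes it the only one. This disposes of all four cases at once, so the only thing to verify is that the function spaces match verbatim, which they do.

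For completeness I would also indicate how the bicontinuity is obtained, since that is where the genuine content sits. Continuity is standard: it follows from the mapping properties of the thermoelastic single- and double-layer potentials between the scales $\widetilde H^s(\Gamma)$ and $H^s(\Gamma)$, together with the explicit form of the boundary operators $\mathbb{B}_i,\mathbb{B}_i^*$ used to build $\bm V_i$ in \eqref{operatorVi}. Invertibility is handled by the Fredholm alternative: one splits $\bm V_i=\bm V_i^0+\bm C_i$, where $\bm V_i^0$ collects the principal part (the leading-order, Laplace-type contributions of the four thermoelastic kernels $\mathbb E_{jk}$) and $\bm C_i$ is compact; a Gårding inequality for $\bm V_i^0$ on the weighted product space $X_i$ then yields that $\bm V_i\colon X_i\to Y_i$ is Fredholm of index zero. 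Injectivity is reduced to the uniqueness of the forward problem: if $\bm\psi_i\in X_i$ and $\bm V_i[\bm\psi_i]=0$, put $\bm U:=\mathcal S_i[\bm\psi_i]$; then $\mathbb L\bm U=0$ in $\R^2\setminus\Gamma$, $\bm U$ satisfies the radiation conditions, and by \eqref{operatorVi} its boundary datum $\mathbb B_i(\pa,\bm\nu)\bm U$ on $\Gamma$ equals $\bm V_i[\bm\psi_i]=0$. Since $\R^2\setminus\overline\Gamma$ is connected, \cite[Theorem 3.1]{C00} forces $\bm U\equiv 0$, and the jump relations of the thermoelastic layer potentials across $\Gamma$ identify $\bm\psi_i$ with a jump of traces of $\bm U$, whence $\bm\psi_i=0$. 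Combining injectivity with the index-zero Fredholm property gives invertibility, and the theorem follows as above.

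I expect the main obstacle in this line of argument to be the coercivity (Gårding) step for $\bm V_i^0$ on the correct product of weighted Sobolev spaces, in the cases $i=2,3,4$: one must isolate the genuine principal part of the Neumann- and mixed-type weighted operators and show it has the right sign modulo compact perturbations. This is delicate precisely because, as noted in the introduction, the double-layer operator $K$ and its adjoint $K^*$ are not compact for the Biot system, so the leading part of $\bm V_2$ and $\bm V_4$ is not simply the hypersingular Laplace operator but carries an extra non-compact term that has to be controlled. Everything else — the passage from bicontinuity to existence and uniqueness of the BIE solutions — is immediate.
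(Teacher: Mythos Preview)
Your proposal is correct and matches the paper's approach: the paper gives no proof at all for this theorem, treating it as an immediate consequence of the bicontinuity of $\bm V_i$ cited from \cite[Theorem~4.1]{C00} in the sentence directly preceding the statement --- exactly your first paragraph. Your additional sketch of how bicontinuity itself is obtained (G{\aa}rding inequality, Fredholm alternative, uniqueness via the forward problem) goes beyond what the paper does, but is consistent with the argument in \cite{C00} that the paper simply cites.
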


\section{Regularized boundary integral equations}
\label{sec:3}

The fact that the boundary integral operator (BIO) $\bm V_1$ is weakly singular, while the operators $\bm V_i$, $i = 2, 3, 4$, are all hypersingular—due to the presence of second-order derivatives in the operators $T(\partial, \bm \nu)$ and/or $\partial_{\bm \nu}$—introduces several challenges:

\begin{itemize} \item[(1)] The eigenvalues of the single-layer operator $\bm V_1$ and the hypersingular operators $\bm V_i$, $i = 2, 3, 4$, form sequences that accumulate at zero and infinity, respectively. As a result, employing Krylov-subspace iterative solvers such as GMRES to solve the corresponding discretized linear systems typically requires a large number of iterations.

\item[(2)] The solutions to the boundary integral equations (BIEs) \eqref{BIE} exhibit singular behavior near the endpoints of the open arc, depending on the type of boundary conditions. This introduces additional difficulties in achieving high accuracy and reducing the number of GMRES iterations in numerical implementations.

\item[(3)] The numerical evaluation of strongly and hypersingular integrals must be properly regularized to ensure computational efficiency. \end{itemize}

Each of these issues will be addressed in the following subsections.

\subsection{Spectrum for closed boundaries}
\label{sec:3.1}

\jp{To reduce the number of GMRES iterations for the solution of (\ref{BIE}), we rely on analytic preconditioning constructed from the Calder\'on identities for the corresponding BIOs. } 

In this subsection, we let $\Gamma=\Gamma_c$ be a smooth closed-curve which is the boundary of a bounded domain $\Omega^-$, i.e., $\Gamma_c=\partial\Omega^-$. The exterior domain $\mathbb{R}^2\backslash\overline{\Omega^-}$ is denoted by $\Omega^+$. Denote
\begin{align*}
& \bm K[\bm \psi](\bm x)= \int _{\Gamma_c}(\mathbb{B}_2^{*}(\pa_{\bm y},\bm\nu_{\bm y})\mathbb{E}({\bm x},{\bm y}))^{\top}\bm \psi ({\bm y})d s_{\bm y}, \quad  {\bm x}\in \Gamma,\\
& \bm K'[\bm \psi](\bm x)= \int _{\Gamma_c}\mathbb{B}_2(\pa_{\bm x},\bm\nu_{\bm x})\mathbb{E}^\top({\bm x},{\bm y})\bm \psi ({\bm y})d s_{\bm y}, \quad  {\bm x}\in \Gamma,\\
& \bm P[\bm\psi](\bm x)=\int _{\Gamma_c}\mathbb{B}_4(\pa_{\bm x},\bm\nu_{\bm x})(\mathbb{B}_3^{*}(\pa_{\bm y},\bm\nu_{\bm y})\mathbb{E}({\bm x},{\bm y}))^{\top}\bm \psi ({\bm y})d s_{\bm y}, \quad  {\bm x}\in \Gamma,\\
& \bm Q[\bm\psi](\bm x)=\int _{\Gamma_c}\mathbb{B}_3(\pa_{\bm x},\bm\nu_{\bm x})(\mathbb{B}_4^{*}(\pa_{\bm y},\bm\nu_{\bm y})\mathbb{E}({\bm x},{\bm y}))^{\top}\bm \psi ({\bm y})d s_{\bm y}, \quad  {\bm x}\in \Gamma.
\end{align*}
Let us introduce the matrix block of the operators, 
$$
\bm{K}  = \begin{pmatrix}
 \bm{K}_{11} & \bm{K}_{12}\\
 \bm{K}_{21} & \bm{K}_{22}
\end{pmatrix}, \bm{K}'  = \begin{pmatrix}
 \bm{K}_{11}' & \bm{K}_{12}'\\
 \bm{K}_{21}' & \bm{K}_{22}'
\end{pmatrix}, \quad 
\bm{V}_i  = \begin{pmatrix}
 \bm{V}_{i,11} & \bm{V}_{i,12}\\
 \bm{V}_{i,21} & \bm{V}_{i,22}
\end{pmatrix}, \ i = 1,2,3,4.
$$

\begin{lemma}
\label{lemma:repPQ}
The matrix blocks of the operators $\bm{V}_3, \bm{V}_4$, $\bm{P}$ and $\bm{Q}$ are given by:
\begin{align*}
    \bm{V}_3  = \begin{pmatrix}
        \bm{V}_{1,11}  & \bm{K}_{12} \\
         \bm{K}'_{21} & \bm{V}_{2,22}
    \end{pmatrix}, \quad
    \bm{V}_4  = \begin{pmatrix}
        \bm{V}_{2,11}  & -\bm{K}'_{12} \\
        - \bm{K}_{21} & \bm{V}_{1,22}
    \end{pmatrix} , 
\end{align*}
and
\begin{align*}
    \bm{P} = \begin{pmatrix}
    \bm{K}'_{11} & \bm{V}_{2,12} \\
    -\bm{V}_{1,21} & -\bm{K}_{22}
    \end{pmatrix}, \quad \bm{Q} = \begin{pmatrix}
    \bm{K}_{11} & -\bm{V}_{1,12} \\
    \bm{V}_{2,21} & -\bm{K}'_{22}
    \end{pmatrix}.
\end{align*}
\end{lemma}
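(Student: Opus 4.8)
The plan is to reduce all four identities to one algebraic observation: the mixed-type boundary operators $\mathbb{B}_3,\mathbb{B}_4$ and their adjoints are row-wise splices of $\mathbb{B}_1$ and $\mathbb{B}_2$. Introduce the constant symmetric projections $P_u=\mathrm{diag}(1,1,0)$ and $P_p=\mathrm{diag}(0,0,1)$ of $\mathbb{R}^3=\mathbb{R}^2\times\mathbb{R}$ onto the displacement and temperature components, so that $P_u+P_p=\mathbb{I}_3$ and $P_a^\top=P_a$. Since the lower-left $\mathbb{R}^2\to\mathbb{R}$ entry of every $\mathbb{B}_i$ and $\mathbb{B}_i^*$ listed above vanishes, one checks directly that
\begin{align*}
\mathbb{B}_3 &= P_u\mathbb{B}_1 + P_p\mathbb{B}_2, & \mathbb{B}_4 &= P_u\mathbb{B}_2 - P_p\mathbb{B}_1, \\
\mathbb{B}_3^* &= P_u\mathbb{B}_1^* + P_p\mathbb{B}_2^*, & \mathbb{B}_4^* &= P_u\mathbb{B}_2^* - P_p\mathbb{B}_1^*.
\end{align*}

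The second step is to substitute these decompositions into the integral kernels of $\bm V_3,\bm V_4,\bm P,\bm Q$. Because $P_u,P_p$ have constant entries, they commute with the boundary differential operators acting in the opposite variable and satisfy $(P_aM)^\top=M^\top P_a$, so they factor out of the kernel; for example,
\[
\mathbb{B}_3(\partial_{\bm x})\bigl(\mathbb{B}_3^*(\partial_{\bm y})\mathbb{E}\bigr)^{\!\top}
=\sum_{a,b\in\{u,p\}} \varepsilon_{ab}\, P_a\,\mathbb{B}_{i(a)}(\partial_{\bm x})\bigl(\mathbb{B}_{i(b)}^*(\partial_{\bm y})\mathbb{E}\bigr)^{\!\top} P_b ,
\]
where $i(u)=1$, $i(p)=2$ and $\varepsilon_{ab}=+1$ for all $a,b$ in the case of $\bm V_3$. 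The two diagonal terms are the kernels of $\bm V_1$ and $\bm V_2$ pre- and post-multiplied by $P_u$, respectively $P_p$. For the two off-diagonal terms one invokes $\mathbb{B}_1=\mathbb{B}_1^*=\mathbb{I}_3$, which identifies $\mathbb{B}_1(\partial_{\bm x})(\mathbb{B}_2^*(\partial_{\bm y})\mathbb{E})^\top$ with the kernel of $\bm K$ and $\mathbb{B}_2(\partial_{\bm x})(\mathbb{B}_1^*(\partial_{\bm y})\mathbb{E})^\top=\mathbb{B}_2(\partial_{\bm x})\mathbb{E}^\top$ with the kernel of $\bm K'$. Since $P_aMP_b$ picks out the $(1,1),(1,2),(2,1),(2,2)$ block of a matrix $M$ according to $(a,b)$, collecting the four blocks gives $\bm V_3=\begin{pmatrix}\bm V_{1,11}&\bm K_{12}\\ \bm K'_{21}&\bm V_{2,22}\end{pmatrix}$. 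Repeating the computation with $\mathbb{B}_4=P_u\mathbb{B}_2-P_p\mathbb{B}_1$ (so that $\varepsilon_{up}=\varepsilon_{pu}=-1$) yields the stated $\bm V_4$, and the two cross-combinations, with the test block taken from $\mathbb{B}_4$ and the source block from $\mathbb{B}_3^*$ (and vice versa), yield $\bm P$ and $\bm Q$.

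The argument is essentially bookkeeping, so I do not anticipate a genuine obstacle. The delicate points are: (i) keeping track of transposes when $\mathbb{B}_i^*(\partial_{\bm y})$ is moved past $\mathbb{E}$ to the right; (ii) the sign bookkeeping forced by the entry $-1$ in the temperature rows of $\mathbb{B}_4,\mathbb{B}_4^*$ as opposed to $+1$ in $\mathbb{B}_1,\mathbb{B}_1^*$; and (iii) verifying that the mixed products genuinely coincide with the kernels of $\bm K$ and $\bm K'$, which hinges only on $\mathbb{B}_1=\mathbb{B}_1^*=\mathbb{I}_3$. I expect (ii) to be the most error-prone step, and would cross-check each of the four matrices by evaluating one representative scalar entry directly from the explicit formulas for $\mathbb{E}_{11},\bm E_{12},\bm E_{21},E_{22}$.
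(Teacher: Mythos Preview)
Your proposal is correct and amounts to the same direct kernel computation the paper invokes (the paper's own proof simply says ``directly computing the associated singular kernels \ldots\ details are omitted''). Your projection device $P_u,P_p$ and the identities $\mathbb{B}_3=P_u\mathbb{B}_1+P_p\mathbb{B}_2$, $\mathbb{B}_4=P_u\mathbb{B}_2-P_p\mathbb{B}_1$ are a clean way to organize that computation and make the sign bookkeeping transparent, so in that sense your write-up is more informative than what the paper provides.
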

\begin{proof}
The result can be obtained by directly computing the associated singular kernels for each of the BIOs. The details are omitted, as they do not provide meaningful information.
\end{proof}

\jp{

\begin{lemma}
\label{lemma:compactness1}
If $\Gamma_c$ is smooth the following operators are compact.
\begin{align*}
    \bm{V}_{i,12} &: H^{(-1)^i/2}(\Gamma) \rightarrow H^{-(-1)^i/2}(\Gamma)^2, \quad i=1,2, \\
    \bm{V}_{i,21} &: H^{(-1)^i/2}(\Gamma)^2 \rightarrow H^{-(-1)^i/2}(\Gamma), \quad i=1,2, \\ 
    \bm{K}_{12}&: H^{1/2}(\Gamma) \rightarrow H^{1/2}(\Gamma)^2,  \quad \quad
        \bm{K}'_{12}: H^{-1/2}(\Gamma) \rightarrow H^{-1/2}(\Gamma)^2, \\
            \bm{K}_{21}&: H^{1/2}(\Gamma)^2 \rightarrow H^{1/2}(\Gamma),  \quad \quad
        \bm{K}'_{21}: H^{-1/2}(\Gamma)^2 \rightarrow H^{-1/2}(\Gamma), \\
            \bm{K}_{22}&: H^{1/2}(\Gamma) \rightarrow H^{1/2}(\Gamma), \quad \quad
        \bm{K}'_{22}: H^{-1/2}(\Gamma) \rightarrow H^{-1/2}(\Gamma).        
\end{align*}
\end{lemma}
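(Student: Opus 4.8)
The plan is to reduce every operator in the list to an explicit integral kernel, via Lemma~\ref{lemma:repPQ} and the closed forms of $\mathbb{E}$, $\mathbb{B}_i$ and $\mathbb{B}_i^*$, and then to show that each such kernel is either weakly singular or — in the two worst cases — a Calder\'on--Zygmund kernel of order $0$ plus a weakly singular remainder. The structural point that makes this possible is that the elastic--thermal coupling in $\mathbb{E}$ is carried only by $\bm E_{12}$ and $\bm E_{21}$, both proportional to $\nabla_{\bm x}(\gamma_{k_1}-\gamma_{k_2})$, and by the Helmholtz-type scalar $E_{22}$. Because the leading logarithmic singularity of $\gamma_k(\bm x,\bm y)=\frac{i}{4}H_0^{(1)}(k|\bm x-\bm y|)$ does not depend on $k$, the difference $\phi:=\gamma_{k_1}-\gamma_{k_2}$ carries no logarithm: one has $\phi(\bm x,\bm y)=c_0+O(r^2\log r)$ with $r=|\bm x-\bm y|$, so $\phi$ is $C^{1,\alpha}$ near the diagonal for every $\alpha<1$. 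Consequently a kernel obtained from $\phi$ by at most two derivatives is weakly singular (at worst $O(\log r)$), while one obtained by exactly three derivatives is, modulo a continuous remainder, homogeneous of degree $-1$, i.e. Calder\'on--Zygmund of order $0$. I will also use two elementary facts to rule out any hidden stronger singularity when $\bm T$ acts on a gradient field: $\mathrm{curl}\,\nabla\equiv 0$, which annihilates the curl term of $\bm T$ applied to $\bm E_{12}$ or $\bm E_{21}$; and $\Delta_{\bm x}\gamma_{k_j}(\bm x,\bm y)=-k_j^2\gamma_{k_j}(\bm x,\bm y)$ off the diagonal, so the divergence term of $\bm T$ converts $\Delta_{\bm x}\phi$ into $-k_1^2\gamma_{k_1}+k_2^2\gamma_{k_2}$, which is only logarithmically singular. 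Finally I will invoke the classical fact that on a smooth closed curve the normal derivative of a Helmholtz single-layer kernel — i.e. the kernel of the double-layer and of its adjoint — is weakly singular (indeed $C^\infty$), since $\bm\nu_{\bm y}\cdot(\bm y-\bm x)=O(r^2)$ along $\Gamma$.

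With these ingredients the verification is block-by-block. The kernels of $\bm V_{1,12}$ and $\bm V_{1,21}$ are, up to constants and transposition, $\bm E_{21}$ and $\bm E_{12}$, i.e. $\nabla_{\bm x}\phi$, hence continuous. The kernels of $\bm K_{12},\bm K_{21},\bm K'_{12},\bm K'_{21}$ involve one further derivative of $\phi$ (appearing only as $\partial_{\bm\nu}\nabla\phi$ or $\Delta\phi$, handled by the two facts above) or one derivative of $E_{22}$, and are therefore weakly singular. The kernels of $\bm K_{22}$ and $\bm K'_{22}$ are exactly $\partial_{\bm\nu_{\bm y}}E_{22}$ and $\partial_{\bm\nu_{\bm x}}E_{22}$, weakly singular by the last fact. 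The only non-weakly-singular operators are $\bm V_{2,12}$ and $\bm V_{2,21}$: their kernels contain the term $\bm T_{\bm x}\partial_{\bm\nu_{\bm y}}\bm E_{21}$ (respectively its $\bm x\leftrightarrow\bm y$-transposed analogue), which, using again $\mathrm{curl}\,\nabla=0$ and $\Delta_{\bm x}\phi=-k_1^2\gamma_{k_1}+k_2^2\gamma_{k_2}$, splits into $2\mu\,\partial_{\bm\nu_{\bm x}}\nabla_{\bm x}\partial_{\bm\nu_{\bm y}}\phi$ — a triple derivative of $\phi$, hence Calder\'on--Zygmund of order $0$ — plus $\lambda\,\bm\nu_{\bm x}\,\partial_{\bm\nu_{\bm y}}(-k_1^2\gamma_{k_1}+k_2^2\gamma_{k_2})$ and a multiple of $\bm\nu_{\bm x}\,\partial_{\bm\nu_{\bm y}}E_{22}$, both weakly singular.

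It then remains to convert these kernel estimates into compactness. Since $\Gamma$ is a smooth closed curve, a weakly singular kernel (here at worst logarithmic) defines a smoothing boundary integral operator that gains at least one Sobolev order, a Calder\'on--Zygmund kernel of order $0$ defines an operator bounded on $H^s(\Gamma)$ for every $s$, and the embedding $H^{s+\varepsilon}(\Gamma)\hookrightarrow H^s(\Gamma)$ is compact for every $\varepsilon>0$. Running through the list, each operator maps its source space boundedly into a space compactly embedded in its target space: $\bm V_{1,12},\bm V_{1,21}$ map $H^{-1/2}(\Gamma)^{(\cdot)}$ into $H^{3/2}(\Gamma)^{(\cdot)}\hookrightarrow\hookrightarrow H^{1/2}(\Gamma)^{(\cdot)}$; $\bm V_{2,12},\bm V_{2,21}$ map $H^{1/2}(\Gamma)^{(\cdot)}$ boundedly into $H^{1/2}(\Gamma)^{(\cdot)}\hookrightarrow\hookrightarrow H^{-1/2}(\Gamma)^{(\cdot)}$; and each weakly singular $\bm K$-block maps into a space one order smoother than required. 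Hence all the listed operators are compact.

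The hard part will be the kernel bookkeeping for $\bm V_{2,12}$ and $\bm V_{2,21}$: one must check that the logarithmic cancellation built into $\gamma_{k_1}-\gamma_{k_2}$, together with $\mathrm{curl}\,\nabla=0$ and $\Delta_{\bm x}\gamma_{k_j}=-k_j^2\gamma_{k_j}$, really does remove every would-be strongly or hyper-singular contribution, leaving at worst a genuine Cauchy-principal-value kernel, and then to invoke the (standard but not entirely elementary) $H^s(\Gamma)$-boundedness of such singular integral operators on a smooth closed curve. Everything else is routine once the kernels have been written out.
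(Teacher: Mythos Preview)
Your approach is correct and mirrors the paper's: both rest on the observation that $\bm E_{12},\bm E_{21}\propto\nabla_{\bm x}(\gamma_{k_1}-\gamma_{k_2})$ have leading singularity $(\bm x-\bm y)\log|\bm x-\bm y|$ (equivalently, that $\phi=\gamma_{k_1}-\gamma_{k_2}$ loses its logarithm), and then invoke standard mapping properties of the resulting boundary integral operators, with your write-up supplying the explicit block-by-block kernel bookkeeping and the $H^s$-boundedness/compact-embedding step that the paper compresses into a one-line citation. One minor slip that does not affect the argument: the Helmholtz double-layer kernel on a smooth closed curve is continuous but not $C^\infty$, since the $r^2\log r$ remainder in the Hankel expansion survives.
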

\begin{proof}
For $\bm{K}_{22}$, $\bm{K}'_{22}$ the results is direct consequence of the compactness of the double layer and the adjoint double layer for the Helmholtz equation. All the other cases follows from the fact that the leading singular term of $\bm{E}_{12}$ and $\bm{E}_{21}$ is $(\bm{x}-\bm{y})\log\|\bm{x}-\bm{y}\|$, and standard results for integral operators see, \cite[Chapter 6]{SJG13}. 
\end{proof}
}

\begin{lemma}
\label{lem-jump}
Define
\ben
\mathbb{B}_i^{\pm}\bm v(\bm x)= \lim_{h\rightarrow 0^+, \bm z=\bm x\pm h\bm\nu_{\bm x}} \mathbb{B}_i(\pa_{\bm z},\bm\nu_{\bm x})\bm v(\bm z),\quad x\in\Gamma.
\enn
Then the following jump relations hold:
\begin{align}
\label{jump1}
& \mathbb{B}_2^{\pm} \mathcal{S}_1[\bm\psi]  = \bm{K}'[\bm \psi]\mp \frac{1}{2} \bm \psi,\\
\label{jump2}
& \mathbb{B}_1^{\pm} \mathcal{S}_2[\bm\psi] =  \bm{K}[\bm \psi]\pm \frac{1}{2}\bm \psi,\\
\label{jump3}
& \mathbb{B}_4^{\pm} \mathcal{S}_3[\bm\psi]  = \bm{P}[\bm \psi] \mp \frac{1}{2} \bm \psi,\\
\label{jump4}
& \mathbb{B}_3^{\pm} \mathcal{S}_4[\bm\psi]  = \bm{Q}[\bm \psi] \pm \frac{1}{2}  \bm \psi .
\end{align}
\end{lemma}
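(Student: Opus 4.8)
The plan is to reduce all four identities to the classical jump relations for the scalar Helmholtz and the Navier (linear elasticity) single- and double-layer potentials, exploiting the $2\times2$ block structure of $\mathbb E$, $\mathbb B_i$ and $\mathbb B_i^*$. The structural facts I would need about the kernels are: (i) $\mathbb E_{11}$ has the same leading singularity as the elastostatic (Kelvin) fundamental-solution matrix and $E_{22}$ has leading part $-\tfrac1{2\pi}\log\|\bm x-\bm y\|$ — both follow by inspecting the explicit formulas, using that the coefficients multiplying the $\gamma_{k_j}$-combinations in $\mathbb E_{11}$ and in $E_{22}$ sum to zero, so the ``extra'' logarithmic terms cancel and leave an $O(\|\bm x-\bm y\|^2\log\|\bm x-\bm y\|)$ remainder; (ii) $\bm E_{12}=c_{12}\nabla_{\bm x}(\gamma_{k_1}-\gamma_{k_2})$ and $\bm E_{21}=c_{21}\nabla_{\bm x}(\gamma_{k_1}-\gamma_{k_2})$ for constants $c_{12},c_{21}$, and $\gamma_{k_1}-\gamma_{k_2}$ is bounded with $O(\log\|\bm x-\bm y\|)$ second derivatives (its leading log cancels), so $\bm E_{12},\bm E_{21}$ are of the milder type $(\bm x-\bm y)\log\|\bm x-\bm y\|$ already used in Lemma~\ref{lemma:compactness1}. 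Since for each $i$ the operator $\mathbb B_j(\partial_{\bm x},\bm\nu_{\bm x})$ applied to $\mathcal S_i[\bm\psi]$ away from $\Gamma_c$ reproduces exactly the corresponding principal-value operator $\bm K'$, $\bm K$, $\bm P$ or $\bm Q$, it remains only to compute the jump term on each side.

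For \eqref{jump1}--\eqref{jump2}, write $\bm\psi=(\bm\varphi,\psi)$ and expand $\mathbb B_2^{\pm}\mathcal S_1$ (resp.\ $\mathbb B_1^{\pm}\mathcal S_2$) block-wise. The $(1,1)$-contribution is the traction of the elastic single-layer (resp.\ the elastic double-layer) of $\bm\varphi$, and the $(2,2)$-contribution is the normal derivative of the $E_{22}$-single-layer (resp.\ the $E_{22}$-double-layer) of $\psi$; by the classical jump relations these produce $\mp\tfrac12\bm\varphi$ and $\mp\tfrac12\psi$ (resp.\ $\pm\tfrac12\bm\varphi$ and $\pm\tfrac12\psi$), independently of $k_s,k_1,k_2$. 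Every other contribution — the lower-order parts of $\mathbb E_{11},E_{22}$, and all contributions involving $\bm E_{12},\bm E_{21}$ — has at most one derivative landing on a $\log$- or $(\bm x-\bm y)\log$-type kernel, because $\mathbb B_1$ is of order $0$ and $\mathbb B_2,\mathbb B_2^*$ of order $1$; such terms are weakly singular integral operators, hence continuous across $\Gamma_c$ with no jump. Collecting the blocks gives $\mathbb B_2^{\pm}\mathcal S_1[\bm\psi]=\bm K'[\bm\psi]\mp\tfrac12\bm\psi$ and $\mathbb B_1^{\pm}\mathcal S_2[\bm\psi]=\bm K[\bm\psi]\pm\tfrac12\bm\psi$.

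Identities \eqref{jump3}--\eqref{jump4} are the delicate ones: now $\mathbb B_3^*$ (resp.\ $\mathbb B_4^*$) and $\mathbb B_4$ (resp.\ $\mathbb B_3$) each contribute one derivative, so up to two further derivatives may act on $\bm E_{12},\bm E_{21}$, which then become strongly singular, and one must show these extra coupling terms produce no \emph{net} jump. The diagonal part again gives $\mp\tfrac12\bm\psi$ (resp.\ $\pm\tfrac12\bm\psi$) exactly as before. For the coupling part: since $\gamma_{k_1}-\gamma_{k_2}$ has bounded gradient, one differentiation of the associated layer potentials leaves continuous traces from both sides, so only second-order differentiation can create a jump. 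Writing $S_k,D_k$ for the scalar Helmholtz single- and double-layer potentials, and using $\Delta_{\bm x}(\gamma_{k_1}-\gamma_{k_2})=-k_1^2\gamma_{k_1}+k_2^2\gamma_{k_2}$, $\Delta D_k=-k^2D_k$, the surface identity $\partial_{\bm\nu}^2=\Delta-\partial_{\bm\tau}^2-(\mbox{curv.})\partial_{\bm\nu}$, and $\bm T(\partial,\bm\nu)\nabla=2\mu\,\partial_{\bm\nu}\nabla+\lambda\bm\nu\,\Delta$ on gradient fields, the $\bm T$-operator applied to $\bm E_{12},\bm E_{21}$ turns these coupling terms into combinations of normal derivatives of $S_{k_j}\rho$ and $D_{k_j}\rho$ carrying explicit $k_j^2$-weights; the tangential pieces of the resulting jumps cancel between $k_1$ and $k_2$, while the surviving (normal) piece is proportional to $k_1^2-k_2^2$ times the trace density contracted with $\bm\nu$. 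After multiplication by the explicit coefficient $\propto 1/((k_1^2-k_2^2)(\lambda+2\mu))$ of $\bm E_{12},\bm E_{21}$, the factor $k_1^2-k_2^2$ cancels, the coefficient $2\mu+\lambda$ issuing from $\bm T(\partial,\bm\nu)\nabla$ cancels the $1/(\lambda+2\mu)$, and the jump becomes wavenumber-independent and equal and opposite to the jump of the zeroth-order coupling term — $-\gamma\bm\nu_{\bm x}$ in $\mathbb B_4$ (resp.\ $-i\omega\eta\bm\nu_{\bm y}$ in $\mathbb B_4^*$) acting on the $E_{22}$-double-layer of $\psi$ (resp.\ on $\partial_{\bm\nu_{\bm x}}$ of the $E_{22}$-single-layer of $\bm\nu\cdot\bm\varphi$). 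Hence all coupling jumps cancel in pairs and $\mathbb B_4^{\pm}\mathcal S_3[\bm\psi]=\bm P[\bm\psi]\mp\tfrac12\bm\psi$, $\mathbb B_3^{\pm}\mathcal S_4[\bm\psi]=\bm Q[\bm\psi]\pm\tfrac12\bm\psi$.

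The main obstacle is exactly this last step: one has to identify which coupling contributions in \eqref{jump3}--\eqref{jump4} are genuinely strongly singular, compute their wavenumber-dependent jumps, and verify the precise cancellation against the zeroth-order coupling terms — a computation in which the coefficients of $\bm E_{12},\bm E_{21}$ and the algebra of the traction operator conspire. A more conceptual alternative would be to invoke the third Green identity for the Biot system $\mathbb L$ for the two complementary pairs $(\mathbb B_1,\mathbb B_2)$ and $(\mathbb B_3,\mathbb B_4)$ (cf.\ \cite{K79,C00}) and to read off the jumps from the fact that the associated representation formula reproduces a radiating solution inside its domain and vanishes outside; this avoids the kernel bookkeeping at the price of first establishing the Green identity for the pair $(\mathbb B_3,\mathbb B_4)$.
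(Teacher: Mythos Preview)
Your treatment of \eqref{jump1}--\eqref{jump2} is essentially the paper's: both reduce to the standard jump relations for the thermoelastic single- and double-layer potentials, the paper simply citing \cite{K79} rather than re-deriving them from the block singularities.

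For \eqref{jump3}--\eqref{jump4} the paper takes a shorter, more structural route that bypasses the kernel-level cancellation you identify as ``the main obstacle''. Instead of computing the jumps of the coupling terms involving $\bm E_{12},\bm E_{21}$ directly, it invokes Lemma~\ref{lemma:repPQ}, which identifies each block of $\bm P$ and $\bm Q$ with a block of $\bm K$, $\bm K'$, $\bm V_1$ or $\bm V_2$. The jumps are then read off block by block from \eqref{jump1}--\eqref{jump2} together with the continuity of $\mathcal S_1$ and of $\mathbb B_2\mathcal S_2$ across $\Gamma_c$ (so that the $\bm V_{1,ij}$- and $\bm V_{2,ij}$-blocks contribute no jump). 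For example, the $(1,2)$-block of $\mathbb B_4\mathcal S_3$ is, by Lemma~\ref{lemma:repPQ}, exactly the $(1,2)$-block of $\mathbb B_2\mathcal S_2$, hence jump-free --- this is precisely the cancellation you compute by hand between $\bm T_{\bm x}(\partial_{\bm\nu_{\bm y}}\bm E_{21})$ and $-\gamma\bm\nu_{\bm x}\partial_{\bm\nu_{\bm y}}E_{22}$. Your direct calculation is valid and self-contained, but the block identification renders the delicate coefficient matching automatic; what you describe as a conspiracy of constants is just the statement that those two terms together \emph{are} a block of the hypersingular operator $\bm V_2$.
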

\begin{proof}
\jp{The first two identities are the standard relations for the traces of the single-layer and double-layer potentials. The last two follow from the first relations, combined with the jump relations of the potentials and Lemma \ref{lemma:repPQ}. We also refer the reader to \cite[Ch. X, \S 2]{K79} for a detailed analysis.}
\end{proof}

\begin{lemma}
\label{lem-calderon}
For the thermoelastic scattering by closed-surfaces, the Calder\'on identities
\be  
\label{calc}  
\bm V_2\bm V_1=-\frac{\mathbb{I}_3}{4}+(\bm K')^2, \quad \bm V_1\bm V_2=-\frac{\mathbb{I}_3}{4}+\bm K^2,
\en
and
\be
\label{calc2}
\bm V_4\bm V_3=-\frac{\mathbb{I}_3}{4}+\bm P^2, \quad 
\bm V_3\bm V_4=-\frac{\mathbb{I}_3}{4}+\bm Q^2,
\en
hold.
\end{lemma}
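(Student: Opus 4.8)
The plan is to run the classical Calder\'on--projector argument, adapted to the Biot system $\mathbb{L}\bm U=0$. Two ingredients are needed. The first is the exterior Somigliana representation: every radiating solution $\bm U$ of $\mathbb{L}\bm U=0$ in $\Omega^+$ satisfies
\begin{align*}
\bm U &= \mathcal{S}_2[\mathbb{B}_1^+\bm U] - \mathcal{S}_1[\mathbb{B}_2^+\bm U] \quad\mbox{in }\Omega^+,\\
\bm U &= \mathcal{S}_4[\mathbb{B}_3^+\bm U] - \mathcal{S}_3[\mathbb{B}_4^+\bm U] \quad\mbox{in }\Omega^+,
\end{align*}
the first being the standard formula available from \cite[Ch.\ X, \S 2]{K79} and the second following from it by the elementary identities $\mathcal{S}_3[(\bm\varphi,\psi)]=\mathcal{S}_1[(\bm\varphi,0)]+\mathcal{S}_2[(0,\psi)]$ and $\mathcal{S}_4[(\bm\varphi,\psi)]=\mathcal{S}_2[(\bm\varphi,0)]-\mathcal{S}_1[(0,\psi)]$, which are immediate from the structure of $\mathbb{B}_3^*,\mathbb{B}_4^*$. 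The second ingredient consists, besides the jump relations of Lemma~\ref{lem-jump}, of the no-jump identities $\mathbb{B}_1^\pm\mathcal{S}_1=\bm V_1$, $\mathbb{B}_2^\pm\mathcal{S}_2=\bm V_2$, $\mathbb{B}_3^\pm\mathcal{S}_3=\bm V_3$ and $\mathbb{B}_4^\pm\mathcal{S}_4=\bm V_4$: the first is the continuity across $\Gamma$ of the weakly singular single-layer-type potential, and the remaining three --- the absence of a jump for the hyper-singular/mixed-trace operators --- follow from the block decomposition of $\mathcal{S}_3,\mathcal{S}_4$ above, the weak singularity of $\bm E_{12},\bm E_{21}$ (leading term $(\bm x-\bm y)\log\|\bm x-\bm y\|$) and of $\gamma_{k_1}-\gamma_{k_2}$, and the classical no-jump property of the Helmholtz and elastic hyper-singular operators.

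With these in hand, I would fix a density $\bm\psi$ and set $\bm U:=\mathcal{S}_1[\bm\psi]$, which by construction is a radiating solution of $\mathbb{L}\bm U=0$ in $\Omega^+$. By Lemma~\ref{lem-jump} its exterior Cauchy data are $\mathbb{B}_1^+\bm U=\bm V_1\bm\psi$ and $\mathbb{B}_2^+\bm U=(\bm K'-\frac12)\bm\psi$. Substituting these into the first representation formula and then applying $\mathbb{B}_2^+$ to both sides --- using $\mathbb{B}_2^+\mathcal{S}_2=\bm V_2$ and $\mathbb{B}_2^+\mathcal{S}_1[\bm\phi]=(\bm K'-\frac12)\bm\phi$ from \eqref{jump1} --- gives
\[
(\bm K'-\tfrac12)\bm\psi = \bm V_2\bm V_1\bm\psi - (\bm K'-\tfrac12)^2\bm\psi ;
\]
expanding the square and cancelling terms yields $\bm V_2\bm V_1\bm\psi=\big((\bm K')^2-\frac14\mathbb{I}_3\big)\bm\psi$, and since $\bm\psi$ is arbitrary this is the first identity in \eqref{calc}. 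Taking instead $\bm U:=\mathcal{S}_2[\bm\psi]$, whose exterior Cauchy data are $\mathbb{B}_1^+\bm U=(\bm K+\frac12)\bm\psi$ and $\mathbb{B}_2^+\bm U=\bm V_2\bm\psi$ by \eqref{jump2}, and applying $\mathbb{B}_1^+$ to the first representation gives, in exactly the same way, $\bm V_1\bm V_2=-\frac14\mathbb{I}_3+\bm K^2$.

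The identities in \eqref{calc2} are obtained by repeating this computation verbatim with $(\mathcal{S}_3,\mathcal{S}_4,\mathbb{B}_3,\mathbb{B}_4,\bm P,\bm Q)$ in place of $(\mathcal{S}_1,\mathcal{S}_2,\mathbb{B}_1,\mathbb{B}_2,\bm K,\bm K')$ and the second representation formula: with $\bm U:=\mathcal{S}_3[\bm\psi]$ (exterior Cauchy data $\mathbb{B}_3^+\bm U=\bm V_3\bm\psi$, $\mathbb{B}_4^+\bm U=(\bm P-\frac12)\bm\psi$ by \eqref{jump3}) applying $\mathbb{B}_4^+$ yields $\bm V_4\bm V_3=-\frac14\mathbb{I}_3+\bm P^2$, while with $\bm U:=\mathcal{S}_4[\bm\psi]$ (exterior Cauchy data $\mathbb{B}_3^+\bm U=(\bm Q+\frac12)\bm\psi$, $\mathbb{B}_4^+\bm U=\bm V_4\bm\psi$ by \eqref{jump4}) applying $\mathbb{B}_3^+$ yields $\bm V_3\bm V_4=-\frac14\mathbb{I}_3+\bm Q^2$. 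Alternatively, once \eqref{calc} is available one may deduce \eqref{calc2} purely algebraically: Lemma~\ref{lemma:repPQ} rewrites $\bm V_3,\bm V_4,\bm P,\bm Q$ as $2\times2$ block operators in the blocks of $\bm V_1,\bm V_2,\bm K,\bm K'$, and \eqref{calc2} then follows from the block form of \eqref{calc} together with the commutation relations $\bm K\bm V_1=\bm V_1\bm K'$ and $\bm V_2\bm K=\bm K'\bm V_2$, which drop out of the same substitution argument (e.g.\ by applying $\mathbb{B}_1^+$ instead of $\mathbb{B}_2^+$ to $\mathcal{S}_1[\bm\psi]$).

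The conceptual content is light; the point that requires genuine care is the sign and orientation bookkeeping. Since $\mathbb{E}$ is the fundamental solution of the adjoint operator $\mathbb{L}^*$ --- not of $\mathbb{L}$ --- and $\mathbb{B}_i\neq\mathbb{B}_i^*$ for $i=2,4$, one must verify that the potentials $\mathcal{S}_i$ actually solve $\mathbb{L}(\cdot)=0$ and satisfy the thermoelastic radiation conditions, and fix the signs in the Green representation accordingly; this is precisely the information drawn from \cite[Ch.\ X, \S 2]{K79}. The remaining mildly technical step is the justification of the no-jump identities $\mathbb{B}_3^\pm\mathcal{S}_3=\bm V_3$ and $\mathbb{B}_4^\pm\mathcal{S}_4=\bm V_4$ for the mixed traces, which is where the block decomposition of $\mathcal{S}_3,\mathcal{S}_4$ and the weak singularity of $\bm E_{12},\bm E_{21}$ enter; everything after that is elementary algebra, as shown above.
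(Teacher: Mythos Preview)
Your proof is correct and follows essentially the same approach as the paper: both derive the identities from the thermoelastic Green representation formula combined with the jump relations of Lemma~\ref{lem-jump}. The paper packages the argument by assembling these relations into the Calder\'on projectors $\bm C_i$ and reading off \eqref{calc}--\eqref{calc2} from $\bm C_i^2=\bm C_i$, whereas you substitute the layer potentials $\mathcal{S}_i[\bm\psi]$ directly into the exterior representation and compute; these are two standard, equivalent presentations of the same classical argument.
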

\begin{proof}
Let $\bm U$ be a solution to the thermoelastic equation in $\mathbb{R}^2 \backslash \Gamma_c$ that satisfies the Kupradze radiation condition. Denote the jump of $\mathbb{B}_i \bm U$ across the boundary $\Gamma_c$ by $$[\mathbb{B}_i \bm U]=\mathbb{B}_i^- \bm U -\mathbb{B}_i^+ \bm U , \quad i=1,2,3,4.$$ t follows from the solution representation of layer potentials (see \cite{K79}) that \begin{equation} \label{rep} \bm U(\bm x) = \mathcal{S}_1\big[ [\mathbb{B}_2 \bm U] \big] - \mathcal{S}_2\big[ [\mathbb{B}_1 \bm U] \big], \quad \forall, \bm x \in \mathbb{R}^2 \backslash \Gamma_c. \end{equation} By directly evaluating the right-hand side of \eqref{rep}, we obtain an alternative representation: \begin{equation} \label{rep2} \bm U(\bm x) = \mathcal{S}_3\big[ [\mathbb{B}_4 \bm U] \big] - \mathcal{S}_4\big[ [\mathbb{B}_3 \bm U] \big], \quad \forall, \bm x \in \mathbb{R}^2 \backslash \Gamma_c. \end{equation} Using the jump relations stated in Lemma~\ref{lem-jump}, we define the following operators:
\ben
\bm C_1=\frac{\mathbb{I}_3}{2}+
\begin{pmatrix}
-\bm K & \bm V_1\\
-\bm V_2 & \bm K'
\end{pmatrix},\quad \bm C_2=\frac{\mathbb{I}_3}{2}+
\begin{pmatrix}
-\bm Q & \bm V_3\\
-\bm V_4 & \bm P
\end{pmatrix}.
\enn
These operators are projectors, i.e., $\bm C_i^2 = \bm C_i$, for $i = 1,2$. The desired Calderón identities then follow.

\end{proof}

\begin{lemma}
\label{Kspectrum}
It holds that $\bm K^2 - \bm I_{\lambda,\mu}: H^{1/2}(\Gamma)^3\rightarrow H^{1/2}(\Gamma)^3$, $(\bm K')^2 - \bm I_{\lambda,\mu}: H^{-1/2}(\Gamma)^3\rightarrow H^{-1/2}(\Gamma)^3$, $\bm P^2 - \bm I_{\lambda,\mu}: H^{-1/2}(\Gamma)^2\times H^{1/2}(\Gamma)\rightarrow H^{-1/2}(\Gamma)^2\times H^{1/2}(\Gamma)$ and $\bm Q^2 - \bm I_{\lambda,\mu}: H^{1/2}(\Gamma)^2\times H^{-1/2}(\Gamma)\rightarrow H^{1/2}(\Gamma)^2\times H^{-1/2}(\Gamma)$ are compact where 
\ben
\bm I_{\lambda,\mu}=\begin{pmatrix}
 C_{\lambda ,\mu }^2\mathbb{I}_2 & 0\\
 0 & 0
 \end{pmatrix},
\enn
with $C_{\lambda,\mu}$ being a constant given by
\ben
C_{\lambda ,\mu }= \frac{\mu }{2(\lambda  + 2\mu )} < \frac{1}{2}.
\enn
\end{lemma}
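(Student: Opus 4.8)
The strategy is to reduce the statement to scalar/elastic facts already available in the literature. Since $\bm K$, $\bm K'$, $\bm P$, $\bm Q$ are written in block form (Lemma~\ref{lemma:repPQ}), I would first compute the squares blockwise and identify which off-diagonal blocks are compact by Lemma~\ref{lemma:compactness1}. After discarding the compact contributions, each operator's square collapses (modulo compacts) to a $3\times3$ block-diagonal operator whose only surviving pieces are the $2\times2$ elastic block $\bm K_{11}^2$ (or $\bm K_{11}'^2$) and the scalar Helmholtz piece $\bm K_{22}^2$ (or $\bm K_{22}'^2$). So the assertion $\bm K^2 - \bm I_{\lambda,\mu}$ compact becomes: (i) the scalar pieces are compact perturbations of $0$ (true, since the Helmholtz double-layer $\bm K_{22}$ is itself compact, hence so is its square); and (ii) $\bm K_{11}^2 - C_{\lambda,\mu}^2 \mathbb{I}_2$ is compact on $H^{1/2}(\Gamma)^2$.

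For the elastic part (ii), the key input is that $\bm K_{11}$ and $\bm K_{11}'$ are the elastodynamic double-layer and adjoint double-layer operators associated with the Lam\'e operator $\mu\Delta+(\lambda+\mu)\nabla\nabla\cdot+\rho\omega^2$; these are \emph{not} compact, but it is classical (and was used in \cite{BXY21,XY22}; cf.\ also \cite{AJKKY18}) that their squares are compact perturbations of a constant multiple of the identity, with the constant being exactly $C_{\lambda,\mu}^2$, where $C_{\lambda,\mu}=\mu/(2(\lambda+2\mu))$ comes from the principal symbol of the elastic double-layer. Concretely, one shows $\bm K_{11}$ (as a pseudodifferential operator of order $0$) has principal symbol $\pm i C_{\lambda,\mu}$ in suitable local coordinates, so $\bm K_{11}^2$ has principal symbol $-C_{\lambda,\mu}^2$—wait, sign: the static elastic double-layer has purely imaginary principal symbol, so its square has principal symbol $-C_{\lambda,\mu}^2$; matching this against the known Calder\'on relation $\bm V_{2,11}\bm V_{1,11}=-\tfrac14\mathbb{I}_2+(\bm K_{11}')^2 + \text{compact}$ fixes the sign convention so that $(\bm K_{11}')^2 - C_{\lambda,\mu}^2\mathbb{I}_2$ is compact. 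I would cite the relevant statement from \cite{AJKKY18} or \cite{XY22} directly rather than re-derive the symbol computation.

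For $\bm P^2$ and $\bm Q^2$ I would use the block representations from Lemma~\ref{lemma:repPQ} in the same way: e.g.
\[
\bm P^2 = \begin{pmatrix} \bm K'_{11} & \bm V_{2,12} \\ -\bm V_{1,21} & -\bm K_{22}\end{pmatrix}^2
= \begin{pmatrix} (\bm K'_{11})^2 & * \\ * & \bm K_{22}^2\end{pmatrix} + (\text{off-diagonal products}),
\]
and each off-diagonal product is a composition that includes one of the compact factors $\bm V_{i,12}$, $\bm V_{i,21}$ from Lemma~\ref{lemma:compactness1}, hence compact (using continuity of the other factor between the appropriate Sobolev spaces, as recorded before Theorem~1.1); the diagonal $(\bm K'_{11})^2$ produces the $C_{\lambda,\mu}^2\mathbb{I}_2$ and $\bm K_{22}^2$ is compact. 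The only subtlety is checking that the \emph{products} $\bm V_{2,12}\bm V_{1,21}$ and $\bm V_{1,21}\bm V_{2,12}$ actually land in the right spaces to be compact there—this is a mapping-property bookkeeping exercise using the sharp orders $\pm1/2$ quoted for $\bm V_1,\bm V_2$ and the smoothing in Lemma~\ref{lemma:compactness1}.

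\textbf{Main obstacle.} The genuinely non-routine point is establishing that the constant is exactly $C_{\lambda,\mu}=\mu/(2(\lambda+2\mu))$ with the correct sign, i.e.\ that the elastic double-layer's principal symbol squares to $-C_{\lambda,\mu}^2$ and that this matches the Calder\'on identity \eqref{calc}. Everything else—the blockwise expansion, discarding compact cross terms, compactness of the Helmholtz double-layer and its square—is bookkeeping. I would handle the obstacle by invoking the pseudodifferential-operator analysis of the two-dimensional elastodynamic double-layer operator (as in \cite{AJKKY18} and as used in \cite{XY22}), where the plasmon/Calder\'on spectrum of the elastic Neumann–Poincar\'e operator is computed and is known to accumulate at $\pm C_{\lambda,\mu}$; consistency with \eqref{calc} restricted to the $(1,1)$-block then forces the stated value.
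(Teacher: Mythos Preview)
Your approach is essentially the same as the paper's: for $\bm P^2$ and $\bm Q^2$ the paper performs exactly the blockwise expansion you describe and invokes Lemma~\ref{lemma:compactness1} for the cross terms, while for $\bm K^2$ and $(\bm K')^2$ it simply cites \cite[Theorem~3.1]{ZXY21} (the poroelastic analogue), which amounts to the same block reduction plus the elastic Neumann--Poincar\'e input you spell out. One minor caveat: your aside about the elastic double-layer having ``purely imaginary principal symbol'' is off---in 2D its essential spectrum is the real pair $\pm C_{\lambda,\mu}$ (see \cite{AJKKY18}), so $\bm K_{11}^2$ directly has essential spectrum $\{C_{\lambda,\mu}^2\}$ without any sign gymnastics---but since you ultimately defer to the cited accumulation-point result this does not affect the argument.
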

\begin{proof}
Note that the thermoelastic scattering problem takes an analogous Biot system as the poroelastic scattering problem~\cite{S09,SSU09}. Then the compactness of $(\bm K)^2 - \bm I_{\lambda,\mu}$ and $(\bm K')^2 - \bm I_{\lambda,\mu}$ can be shown by following the proof of \cite[Theorem 3.1]{ZXY21}. To show the compactness of $(\bm P)^2 - \bm I_{\lambda,\mu}$, we note from the matrix blocks of $\bm P$ that
\ben
(\bm P)^2 - \bm I_{\lambda,\mu}= \begin{pmatrix}
(\bm{K}'_{11})^2-\bm{V}_{2,12}\bm{V}_{1,21} -C_{\lambda ,\mu }^2\mathbb{I}_2 & \bm{K}'_{11}\bm{V}_{2,12}-\bm{V}_{2,12}\bm{K}_{22}\\ -\bm{V}_{1,21}\bm{K}'_{11}+ \bm{K}_{22}\bm{V}_{1,21} & -\bm{V}_{1,21}\bm{V}_{2,12}+\bm{K}_{22}^2
\end{pmatrix},
\enn
\jp{the result is then a direct consequence of the compactness of the first component of $(\bm K')^2 - \bm I_{\lambda,\mu}$, and Lemma \ref{lemma:compactness1}. The proof for $(\bm Q)^2 - \bm I_{\lambda,\mu}$ is essentially the same. } 
%Analogous steps of the proof can be found in the proof of Theorem~\ref{V34}.
\end{proof}

% Now we conclude from Lemmas~\ref{lem-calderon} and \ref{lem-specKPQ} the following spectral properties.

\begin{theorem}
\label{V12}
It follows that $\bm V_2\bm V_1: H^{-1/2}(\Gamma)^3\rightarrow H^{-1/2}(\Gamma)^3$ \jp{,} $\bm V_1\bm V_2: H^{1/2}(\Gamma)^3\rightarrow H^{1/2}(\Gamma)^3$, \jp{$\bm V_4\bm V_3: H^{-1/2}(\Gamma)^2 \times H^{1/2}(\Gamma) \rightarrow H^{-1/2}(\Gamma)^2 \times H^{1/2}(\Gamma) $, $\bm V_3\bm V_4: H^{1/2}(\Gamma)^2 \times H^{-1/2}(\Gamma) \rightarrow H^{1/2}(\Gamma)^2 \times H^{-1/2}(\Gamma) $} are second-kind Fredholm operators and their spectrum consist of two nonempty sequences of eigenvalues which accumulate at $-\frac{1}{4}$ and $-\frac{1}{4}+C_{\lambda,\mu}^2 (<0)$, respectively.
\end{theorem}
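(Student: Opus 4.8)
The plan is to use the Calder\'on identities of Lemma~\ref{lem-calderon} to rewrite each of the four products as $-\tfrac14\mathbb{I}_3$ plus the square of a ``double-layer type'' operator, then invoke Lemma~\ref{Kspectrum} to replace that square by the fixed matrix $\bm I_{\lambda,\mu}$ up to a compact error, and finally read off the Fredholm property and the spectral structure from the resulting form ``invertible multiplication operator $+$ compact''. Concretely, on the closed curve $\Gamma_c$ one has $\widetilde H^s(\Gamma_c)=H^s(\Gamma_c)$, so by \cite[Theorem 4.1]{C00} the four products act boundedly on the spaces in the statement. From \eqref{calc}--\eqref{calc2},
\[
\bm V_2\bm V_1 = -\tfrac14\mathbb{I}_3 + (\bm K')^2,\quad \bm V_1\bm V_2 = -\tfrac14\mathbb{I}_3 + \bm K^2,\quad \bm V_4\bm V_3 = -\tfrac14\mathbb{I}_3 + \bm P^2,\quad \bm V_3\bm V_4 = -\tfrac14\mathbb{I}_3 + \bm Q^2 ,
\]
and by Lemma~\ref{Kspectrum} each of $(\bm K')^2-\bm I_{\lambda,\mu}$, $\bm K^2-\bm I_{\lambda,\mu}$, $\bm P^2-\bm I_{\lambda,\mu}$, $\bm Q^2-\bm I_{\lambda,\mu}$ is compact on the corresponding space. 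Hence every one of the four products equals $\bm A+\bm T$ with $\bm T$ compact and
\[
\bm A := \bm I_{\lambda,\mu}-\tfrac14\mathbb{I}_3 = \begin{pmatrix} \big(C_{\lambda,\mu}^2-\tfrac14\big)\mathbb{I}_2 & 0 \\[2pt] 0 & -\tfrac14 \end{pmatrix}
\]
a constant matrix acting by multiplication. Since $0<C_{\lambda,\mu}<\tfrac12$, both $C_{\lambda,\mu}^2-\tfrac14$ and $-\tfrac14$ are nonzero, so $\bm A$ is boundedly invertible; writing $\bm A+\bm T = \bm A(\mathbb{I}_3+\bm A^{-1}\bm T)$ shows each product is $\bm A$ times an operator of the form ``identity $+$ compact'', hence a Fredholm operator of index zero, and of second kind in the (generalized, block-scalar) sense required.

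For the spectrum I would argue as follows. The two eigenvalues $C_{\lambda,\mu}^2-\tfrac14$ and $-\tfrac14$ of $\bm A$ each have an infinite-dimensional eigenspace, so $\sigma_{\mathrm{ess}}(\bm A)=\{C_{\lambda,\mu}^2-\tfrac14,\,-\tfrac14\}$, and by stability of the essential spectrum under compact perturbations the same holds for each product. For $z\notin\{C_{\lambda,\mu}^2-\tfrac14,-\tfrac14\}$ the operator $\bm V_2\bm V_1-z=(\bm A-z)+\bm T$ is Fredholm, and since $\|(\bm A-z)^{-1}\|\to 0$ as $|z|\to\infty$ it is invertible for $|z|$ large; the analytic Fredholm theorem then shows that $(\bm V_2\bm V_1-z)^{-1}$ is meromorphic on $\C\setminus\{C_{\lambda,\mu}^2-\tfrac14,-\tfrac14\}$, so outside these two points the spectrum is a set of isolated eigenvalues of finite algebraic multiplicity that can accumulate only at $C_{\lambda,\mu}^2-\tfrac14$ and $-\tfrac14$, both negative since $C_{\lambda,\mu}^2<\tfrac14$. (The value $-\tfrac14$ is associated with the thermal channel and $-\tfrac14+C_{\lambda,\mu}^2$ with the elastic one.) The same reasoning applies verbatim to $\bm V_1\bm V_2$, $\bm V_4\bm V_3$ and $\bm V_3\bm V_4$.

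The remaining point — and the one I expect to be the main obstacle — is to show that each of the two clusters actually contains infinitely many eigenvalues, so that one genuinely obtains two \emph{nonempty} sequences rather than, say, a finite spectrum. This does not follow from soft arguments alone, since an infinite-rank compact perturbation can still be quasinilpotent on an invariant subspace. I would extract it from finer structure of $\bm T$: on the smooth curve $\Gamma_c$ the operators $\bm V_1,\bm V_2,\bm K,\bm K',\bm P,\bm Q$ are classical pseudodifferential operators, so each product is a classical pseudodifferential operator of order $0$ with constant principal symbol $\bm A$, whence $\bm T$ has order $-1$; computing its symbol from the expansion of $\mathbb{E}(\bm x,\bm y)$ and checking that it does not vanish identically, together with a Weyl-type eigenvalue count (or, alternatively, an explicit Fourier diagonalization on the circle followed by an analytic-perturbation/homotopy argument), yields infinitely many eigenvalues accumulating at each of $-\tfrac14$ and $-\tfrac14+C_{\lambda,\mu}^2$. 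This is where the detailed form of the thermoelastic fundamental solution genuinely enters, and where I would expect to spend most of the effort.
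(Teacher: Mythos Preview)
Your argument is correct and matches the paper's approach exactly: the paper's proof of this theorem is a single sentence invoking Lemmas~\ref{lem-calderon} and~\ref{Kspectrum}. You have gone further by spelling out the Fredholm and essential-spectrum reasoning and by flagging the nonemptiness of the two eigenvalue sequences, a point the paper itself does not address.
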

\begin{proof}
\jp{This is a direct consequence of lemmas \ref{lem-calderon}, and \ref{Kspectrum}}.

% Note that for the thermoelastic scattering by closed-surface, the Calder\'on identities
% \be  
% \label{calc}  
% \bm V_2\bm V_1=-\frac{\mathbb{I}_3}{4}+(\bm K_2')^2\quad \mbox{and}\quad \bm V_1\bm V_2=-\frac{\mathbb{I}_3}{4}+(\bm K_2)^2
% \en
% hold. Then this theorem is a direct consequence of Lemma~\ref{Kspectrum}.
\end{proof}

\subsection{Edge singularities and regularized BIEs}
\label{sec:3.2}

It is known that the solutions to the problem of wave scattering by open-surfaces exhibit certain types of singular behaviour near the edge of the open-surface (two endpoints of the two-dimensional open-arc). As shown in~\cite[Theorems 4.2-4.5]{C00}, the solutions $\bm\psi_i=(\bm\phi_i,\psi_i)$, $i=1,2,3,4$ to the BIEs (\ref{BIE}) can be expressed asymptotically as
\ben
\bm\phi_1\sim \frac{\bm\xi_1}{\sqrt{d}}+\bm\chi_1,\quad \psi_1\sim \frac{\zeta_1}{\sqrt{d}}+\varrho_1,\\
\bm\phi_2\sim \bm\xi_2\sqrt{d}+\bm\chi_2,\quad \psi_2\sim \zeta_2\sqrt{d}+\varrho_2,\\
\bm\phi_3\sim \frac{\bm\xi_3}{\sqrt{d}}+\bm\chi_3,\quad \psi_3\sim \zeta_3\sqrt{d}+\varrho_3, \\
\bm\phi_4\sim \bm\xi_4\sqrt{d}+\bm\chi_4,\quad \psi_4\sim \frac{\zeta_4}{\sqrt{d}}+\varrho_4,
\enn
where for $i=1,2,3,4$, $\bm \xi_i,\zeta_i$ denote certain cut-off functions, $\bm\chi_i,\varrho_i$ are less singular than $\bm\phi_i,\psi_i$, and $d$ denotes the distance to the endpoints of the open-arc $\Gamma$. Assuming the boundary $\Gamma$ and boundary data $\bm G_i$, $i=1,2,3,4$ to be sufficiently smooth, we can simply express the density functions $\bm\psi_i=(\bm\phi_i,\psi_i)$, $i=1,2,3,4$ as 
\ben
\bm\phi_1= \frac{\bm\xi_1}{w},\quad \psi_1= \frac{\zeta_1}{w},\\
\bm\phi_2= \bm\xi_2w,\quad \psi_2= \zeta_2w,\\
\bm\phi_3= \frac{\bm\xi_3}{w},\quad \psi_3= \zeta_3{w},\\
\bm\phi_4= \bm\xi_4 w,\quad \psi_4= \frac{\zeta_4}{w},
\enn
where $w$ is a weight function that reproduces the $\sqrt{d}$ asymptotic character. Denoting 
\ben
\mathbb{W}_1=\mathrm{diag}\{w^{-1},w^{-1},w^{-1}\}, \quad \mathbb{W}_2=\mathrm{diag}\{w,w,w\},\\
 \mathbb{W}_3=\mathrm{diag}\{w^{-1},w^{-1},w\},\quad
\mathbb{W}_4=\mathrm{diag}\{w,w,w^{-1}\},
\enn
we define the weighted BIOs associated with $\bm V_i$, $i=1,2,3,4$ as
\be
\bm V_i^w[\bm\psi]= \bm V_i[\mathbb{W}_i\bm\psi].
\en
Then the original BIEs (\ref{BIE}) can be replaced by 
\be
\label{weighted}
\bm V_i^w[\bm\psi_i](\bm x)= \bm G_i(\bm x),\quad \bm x\in\Gamma,
\en
which explicitly incorporates the edge singularities of the unknown densities.

Similarly to the case of the standard BIOs ($\bm V_i$, $i=1,2,3,4$), which was  mentioned at the beginning of Section~\ref{sec:3}, it can be shown that the eigenvalues of the single-layer operator $\bm V_1^w$ and hyper-singular operators $\bm V_i^w$, $i=2,3,4$ consist of sequences which accumulate at zero and/or infinity depending on the singularity of the kernels. To reduce the required numbers of GMRES iterations, appropriate preconditioning or regularization techniques are necessary. It has been shown in~\cite{XY22} that the weighted elastic BIOs on a smooth open-arc enjoy an analogous spectral property as the classical elastic BIOs on a smooth closed-surface. 
% Inspired by the spectrum study in Section~\ref{sec:3.1} for closed-surface case, we guess that the integral operators $\bm V_i^w$, $i=1,2,3,4$ admit analogous spectral properties, whose rigorous proof is given in the next section. 

Then taking in considerations the results from Section~\ref{sec:3.1} we modify the solution representation (\ref{solrep}) and
construct regularized BIEs to solve the thermoelastic open-arc scattering problems. The regularized solution representations and BIEs for the four cases of boundary conditions are listed in Table~\ref{RsolrepHIE}. 
% Relying on the Calde\'on relations,
% some alternative regularized BIEs can also be constructed and we omit the details here.

\begin{table}[htb]
\label{modified}
  \caption{The regularized solution representations and BIEs.}
\centering
\begin{tabular}{c|c|c}
\hline
type of boundary condition & solution representation in $\mathbb{R}^2\backslash\Gamma$ & BIE on $\Gamma$ \\
\hline
1st-kind & $\bm U=\mathcal{S}_1^w[\widetilde{\bm\psi}_1]$  &  $\bm V^w_2 \bm V_1^w[\widetilde{\bm\psi}_1]=\bm V^w_2[\bm G_1]$ \\
\hline
2nd-kind & $\bm U=\mathcal{S}_2^w\bm V_1^w[\widetilde{\bm\psi}_2]$  &  $\bm V^w_2 \bm V_1^w[\widetilde{\bm\psi}_2]=\bm G_2$\\
\hline
3rd-kind & $\bm U=\mathcal{S}_3^w[\widetilde{\bm\psi}_3]$  &  $\bm V_4^w\bm V_3^w[\widetilde{\bm\psi}_3]=\bm V_4^w[\bm G_3]$\\
\hline
4th-kind & $\bm U=\mathcal{S}_4^w \bm V_3^w[\widetilde{\bm\psi}_4]$  &  $\bm V_4^w\bm V_3^w[\widetilde{\bm\psi}_4]= \bm G_4$ \\
\hline
\end{tabular}
\label{RsolrepHIE}
\end{table}

\subsection{Spectrum for open arcs}
\label{sec:3.3}

The purpose of this section is to study the spectral properties of the regularized formulations presented in Section~\ref{sec:3.2}. Without loss of generality, we assume that the open-arc $\Gamma$ is analytic and can be parameterized by means of a smooth vector function $\bm x=\bm x(t)=(x_1(t),x_2(t))^\top\in\Gamma, t\in(-1,1)$ with $\mathcal{J}(t)=|\bm x'(t)|\ne 0$ in which the prime $'$ denotes the derivative with respect to $t$. In this case, the weight function $w$ can be selected as $w(\bm x(t))=\sqrt{1-t^2}$ to produce the $\sqrt{d}$ asymptotic character. It is important to highlight that the spectral properties of the composite operators $\bm{J}^{2,1}=\bm V^w_2 \bm V_1^w$ and $\bm{J}^{4,3}=\bm V^w_4 \bm V_3^w$ do not come from a direct consequence of the results for closed boundaries (Theorem \ref{V12}), as formulas \eqref{rep} and \eqref{rep2} are no valid for open boundaries.

We begin with the discussion of line-segment case $\Gamma=\Gamma_0$ with $x_1(t)=t, x_2(t)=0, t\in(-1,1)$ for $x\in\Gamma_0$ which primarily rely on the tools introduced in \cite{pinto2024shape} and the results for general case follows analogously. Let 
$T_n$ and $U_n$ denote the first-kind and second-kind Chebyshev polynomials, respectively. We will utilize the following Chebyshev-based function spaces:
$$
T^s := \left\lbrace u = \sum_{n=0}^\infty a_n \frac{T_n}{w}: \|u\|_{T^s}^2 = \sum _{n=0}^\infty (1+n^2)^s |a_n|^2 < \infty \right\rbrace,
$$
$$
W^s :=  \left\lbrace u = \sum_{n=0}^\infty a_n T_n: \|u\|_{W^s}^2 = \sum _{n=0}^\infty (1+n^2)^s |\widehat{a}_n|^2 < \infty\right\rbrace,
$$
$$
U^s :=  \left\lbrace u = \sum_{n=0}^\infty a_n w U_n: \|u\|_{U^s}^2 = \sum _{n=0}^\infty (1+n^2)^s |\ddot{a}_n|^2 < \infty \right\rbrace,
$$
$$
Y^s :=  \left\lbrace u = \sum_{n=0}^\infty a_n U_n: \|u\|_{Y^s}^2 = \sum _{n=0}^\infty (1+n^2)^s |\widetilde{a}_n|^2 < \infty \right\rbrace,
$$
where $s \in \mathbb{R}$, and  
$$a_n = \frac{2-\delta_{0n}}{\pi} \int_{-1}^1 u(t) T_n(t) dt, \quad  \widehat{a}_n = \frac{2-\delta_{0n}}{\pi} \int_{-1}^1 u(t) \frac{T_n}{w}(t) dt,$$
$$\ddot{a}_n = \frac{2}{\pi} \int_{-1}^1 u(t) U_n(t) dt, \quad \widetilde{a}_n = \frac{2}{\pi} \int_{-1}^1 u(t) wU_n(t) dt.$$
Here, we use the notation $u(t)=u(\bm x(t))$ for $\bm x\in\Gamma_0$. Furthermore, the Chebyshev-based spaces are related to the classical Sobolev spaces in the following way, 
$$
\widetilde{H}^{-1/2}(\Gamma_0) = T^{-1/2}, \quad H^{-1/2}(\Gamma_0) = Y^{-1/2}, 
$$
$$
\widetilde{H}^{1/2}(\Gamma_0) = U^{1/2}, \quad H^{1/2}(\Gamma_0) = W^{1/2}.
$$
We will also make use of some properties of the Chebyshev polynomials that are collected in the following result. 
\begin{proposition}
\label{cheb:prop1}
The follows relations hold:
    $$T_n ' = nU_{n-1},\quad (wU_n)' = -(n+1)\frac{T_{n+1}}{w},   $$
$$T_n = \frac{1}{2}(U_n - U_{n-2}), \,\, n\geq 2 \quad\mbox{and}\quad 
U_n =
2 \sum_{j \ \text{odd/even}}^n T_j, $$
where the sum in the last equation means that $j$ takes all the odd numbers from $1$ to $n$ if $n$ is odd, or all the even numbers from $0$ to $n$ if $n$ is even, and the term $j=0$ is divided by $2$.  
\end{proposition}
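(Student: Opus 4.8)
The plan is to reduce every identity to elementary trigonometry via the substitution $t = \cos\theta$ with $\theta \in (0,\pi)$, under which the standard representations $T_n(t) = \cos(n\theta)$ and $U_n(t) = \sin((n+1)\theta)/\sin\theta$ hold, and the weight becomes $w(\bm x(t)) = \sqrt{1-t^2} = \sin\theta$. Since $dt/d\theta = -\sin\theta$, the chain rule gives $\frac{d}{dt} = -\frac{1}{\sin\theta}\frac{d}{d\theta}$, and everything else is a short computation.

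For the first derivative formula I would write $T_n'(t) = -\frac{1}{\sin\theta}\frac{d}{d\theta}\cos(n\theta) = \frac{n\sin(n\theta)}{\sin\theta} = nU_{n-1}(t)$. For the second, the key observation is that $w(t)U_n(t) = \sin\theta \cdot \frac{\sin((n+1)\theta)}{\sin\theta} = \sin((n+1)\theta)$, so that $(wU_n)'(t) = -\frac{1}{\sin\theta}\frac{d}{d\theta}\sin((n+1)\theta) = -(n+1)\frac{\cos((n+1)\theta)}{\sin\theta} = -(n+1)\frac{T_{n+1}(t)}{w(t)}$.

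For the three-term relation I would apply the sum-to-product identity: $U_n(t) - U_{n-2}(t) = \frac{\sin((n+1)\theta) - \sin((n-1)\theta)}{\sin\theta} = \frac{2\cos(n\theta)\sin\theta}{\sin\theta} = 2T_n(t)$, which is valid for $n \geq 2$ so that $U_{n-2}$ is defined. The last identity, the expansion of $U_n$ in the $T_j$, then follows by telescoping: summing $U_j - U_{j-2} = 2T_j$ over $j = n, n-2, \dots$ down to $j=2$ when $n$ is even, or down to $j=1$ when $n$ is odd, and closing the telescope with $U_0 = 1 = T_0$ (even case) or $U_1 = 2t = 2T_1$ (odd case). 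In the even case the closing term $U_0 = T_0$ supplies a single copy of $T_0$ rather than a doubled one, which is precisely why the $j=0$ term in the stated sum carries the factor $1/2$; in the odd case the closing term $U_1 = 2T_1$ already has the correct coefficient and no adjustment is needed.

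There is no substantive obstacle here — all four identities are classical facts about Chebyshev polynomials. The only point requiring a little care is the final telescoping argument, specifically the parity case split and the correct handling of the endpoint term ($U_0$ versus $U_1$), which accounts for the $1/2$ convention on the $j=0$ term.
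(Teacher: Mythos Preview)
Your proof is correct; each of the four identities is handled cleanly via the substitution $t=\cos\theta$, and the telescoping argument with the parity case split correctly accounts for the $1/2$ factor on the $j=0$ term. The paper itself does not prove this proposition, stating the relations as standard properties of Chebyshev polynomials, so there is nothing to compare against.
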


The key property of Chebyshev polynomials in this context is that they serve as eigenfunctions of the following operators \cite{YS88}.
\begin{proposition}
\label{cheb:prop2}
It holds that for $n\in\mathbb{N}$,
\begin{align}
   & \int_{-1} ^1 \frac{-1}{2\pi} \log|t-s| w^{-1}(s) T_n(s) ds  = d_n T_n, \quad  d_n = \begin{cases}
	\frac{\log 2}{2}, \quad n=0, \\ 
	\frac{1}{2n}, \quad n > 0,
\end{cases}
\\
    & \frac{d}{dt} \int_{-1} ^1 \frac{-1}{2\pi} \log|t-s|\frac{d}{ds} (w(s) U_n(s)) ds  = e_n U_n, \quad  e_n = -\frac{n+1}{2}.
\end{align}
\end{proposition}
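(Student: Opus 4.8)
\textbf{Proof plan for Proposition \ref{cheb:prop2}.}

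The plan is to prove both identities by reducing them to classical spectral relations for the logarithmic single-layer operator on $(-1,1)$, together with the differentiation rules from Proposition \ref{cheb:prop1}. For the first identity, I would start from the well-known fact that the operator with kernel $-\frac{1}{2\pi}\log|t-s|$ acting on the weighted Chebyshev polynomials $w^{-1}T_n$ is diagonal; this is exactly the statement that $T_n/w$ are the eigenfunctions of the logarithmic-capacity operator on the interval, a result that goes back to the classical theory of airfoil/Tricomi equations (the reference \cite{YS88}). Concretely, I would verify it by using the Fourier representation: under the substitution $t=\cos\theta$, $s=\cos\phi$, one has the Glauert-type expansion $-\frac{1}{2\pi}\log|{\cos\theta-\cos\phi}| = \frac{\log 2}{2} + \sum_{m=1}^\infty \frac{1}{m}\cos(m\theta)\cos(m\phi)$ (valid as a Fourier series), and $w^{-1}(s)T_n(s)\,ds = \cos(n\phi)\,d\phi$ after the change of variables. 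Integrating term by term against $\cos(n\phi)$ and using orthogonality of $\{\cos(m\phi)\}$ on $(0,\pi)$ immediately yields $d_0=\frac{\log 2}{2}$ and $d_n=\frac{1}{2n}$ for $n\geq 1$.

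For the second identity I would proceed by differentiation, transferring derivatives off the kernel. Using $(wU_n)' = -(n+1)\frac{T_{n+1}}{w}$ from Proposition \ref{cheb:prop1}, the inner integral becomes $-(n+1)\int_{-1}^1 \frac{-1}{2\pi}\log|t-s|\,\frac{T_{n+1}}{w}(s)\,ds$, which by the first identity equals $-(n+1)\,d_{n+1}\,T_{n+1}(t)$. Applying $\frac{d}{dt}$ and invoking $T_{n+1}' = (n+1)U_n$ gives $-(n+1)^2 d_{n+1} U_n = -(n+1)^2\cdot\frac{1}{2(n+1)}U_n = -\frac{n+1}{2}U_n$, which is precisely $e_n U_n$ with $e_n = -\frac{n+1}{2}$. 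The only subtlety is to make sure the outer derivative passes through the integral: since after the first integration the result is the smooth polynomial $-(n+1)d_{n+1}T_{n+1}$, differentiation is entirely classical and no principal-value or Hadamard interpretation is needed at this stage — the hypersingular character is already encoded in the fact that $\frac{d}{ds}(wU_n)$ carries the edge weight $1/w$.

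The main obstacle, such as it is, is purely a matter of rigor rather than of ideas: one must justify the term-by-term integration of the logarithmic Fourier series (the series converges in $L^2(0,\pi)$ but not uniformly near $\theta=\phi$), and one must check the $n=0$ boundary case separately since the constant term behaves differently. Both points are standard — the first follows from $L^2$ convergence and the fact that we test against the bounded functions $\cos(n\phi)$, and the second is a direct computation $\int_0^\pi \frac{\log 2}{2}\,d\phi$ paired appropriately. I would therefore present the argument compactly, citing \cite{YS88} for the first spectral relation and deriving the second from it via Proposition \ref{cheb:prop1}, noting that these diagonalizations are exactly what make the Chebyshev-based spaces $T^s, W^s, U^s, Y^s$ the natural setting for the weighted open-arc operators.
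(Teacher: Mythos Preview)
The paper does not give a proof of this proposition; it simply states both identities as known results, citing \cite{YS88}. Your argument is correct and is the standard one: the cosine substitution together with the Fourier expansion of $\log|\cos\theta-\cos\phi|$ yields the first identity, and the second follows from the first via the differentiation rules $(wU_n)'=-(n+1)T_{n+1}/w$ and $T_{n+1}'=(n+1)U_n$ of Proposition~\ref{cheb:prop1}. One minor slip to fix if you write this up: the expansion should read
\[
-\tfrac{1}{2\pi}\log|\cos\theta-\cos\phi|=\tfrac{\log 2}{2\pi}+\tfrac{1}{\pi}\sum_{m\ge 1}\tfrac{1}{m}\cos(m\theta)\cos(m\phi);
\]
the missing factor of $1/\pi$ is absorbed by the orthogonality weight $\int_0^\pi\cos^2(n\phi)\,d\phi=\pi/2$, so your stated values of $d_n$ are unaffected.
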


To study the spectral properties of $\bm{J}^{2,1}$ and $\bm{J}^{4,3}$, we recall some spectral results for open arc problems of Helmholtz equation presented in \cite{LB15}. The integral operators for this case are defined as follows:
$$
(V_1^H u)(\bm x) = \int_{\Gamma_0} \gamma_k(\bm x,\bm y) u(\bm y) d\bm y,
$$
$$
(V_2^H u) (\bm x)  =  \int_{\Gamma_0} \partial_{\bm\nu_{\bm x} }\partial_{\bm{\nu}_{\bm y}} \gamma_k(\bm{x},\bm y )   u(\bm y) d\bm y,
$$
where $k >0$. The corresponding weighed versions are, $V_1^{H,w}  = V_1^H \circ w^{-1}$, and $V_2^{H,w}  = V_2^H \circ w^{}$. 
\begin{proposition}
\label{prop:helmspectrum}
    The pointwise spectrum of the bounded operator
    $$
     J^H := V_2^{H,w}V_1^{H,w} : W^s \rightarrow W^s, \quad s > -\frac{1}{2},
    $$
is the union of a discrete part $(\lambda_n)_{n \in \mathbb{N}}$, where 
$$
\lambda_n  = \begin{cases}
    -\frac{\log 2}{4}, \quad n=0, \\
    -\frac{1}{4}-\frac{1}{4n}, \quad n >0,  
\end{cases} 
$$
and a complementary part
$$
\left\lbrace
\lambda_x+i \lambda_y: \lambda_x +\frac{1}{4} = r \cos \theta ,\ \lambda_y= r \sin \theta, \ \ 0<r<-\frac{\cos\theta}{4s+2}, \, \frac{\pi}{2}<\theta< \frac{3\pi}{2}  
\right\rbrace ,
$$
up to a compact pertubation.
\end{proposition}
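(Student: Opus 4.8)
The plan is to split each weighted operator into a principal part that is diagonalized by Chebyshev polynomials plus a smoothing remainder, to recognize the composition of the principal parts, expressed in the Chebyshev basis $\{T_n\}$ of $W^s$, as a diagonal operator with entries $\lambda_n$ plus a Ces\`aro-type operator, and then to read off the spectrum of $J^H$ from the structure of these two model operators, collecting all remaining terms into a compact perturbation.

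\textbf{Splitting and diagonalization.} First I would write $\gamma_k(\bm x,\bm y)=-\frac1{2\pi}\log|\bm x-\bm y|+a_k(\bm x,\bm y)$ with $a_k$ real-analytic on $\Gamma_0\times\Gamma_0$; since $|\bm x(t)-\bm x(s)|=|t-s|$ on the segment, this gives $V_1^H=L_1+R_1$, with $L_1u(t)=\int_{-1}^1-\frac1{2\pi}\log|t-s|\,u(s)\,ds$ and $R_1$ smoothing. For $V_2^H$ I would use the standard integration-by-parts identity for the Helmholtz hypersingular operator (the unit normal being constant on $\Gamma_0$, so $\bm\nu_{\bm x}\cdot\bm\nu_{\bm y}\equiv1$) to obtain $V_2^H=L_2-k^2L_1+R_2$ with $L_2u(t)=\frac{d}{dt}\int_{-1}^1-\frac1{2\pi}\log|t-s|\,u'(s)\,ds$ and $R_2$ smoothing; here $-k^2L_1$ is of lower order than $L_2$. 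By Proposition~\ref{cheb:prop2}, $L_1(w^{-1}T_n)=d_nT_n$ and $L_2(wU_n)=e_nU_n$, hence $V_1^{H,w}T_n=d_nT_n+(\text{compact})$ and $V_2^{H,w}U_n=e_nU_n+(\text{compact})$. Composing and returning to the $\{T_n\}$ basis by $T_0=U_0$, $T_1=\tfrac12U_1$, $T_n=\tfrac12(U_n-U_{n-2})$ ($n\ge2$), and $U_m=2\sum_{j\le m,\,j\equiv m\,(\mathrm{mod}\,2)}T_j$ with the $j=0$ term halved (Proposition~\ref{cheb:prop1}), the identity $e_n-e_{n-2}=-1$ yields, for $n\ge2$,
\[
J^HT_n=\lambda_nT_n-\frac1{2n}\sum_{\substack{0\le j\le n-2\\ j\equiv n\,(\mathrm{mod}\,2)}}c_j\,T_j+(\text{compact}),\qquad c_0=\tfrac12,\ \ c_j=1\ \text{otherwise},
\]
while $J^HT_0=\lambda_0T_0+(\text{compact})$ and $J^HT_1=\lambda_1T_1+(\text{compact})$, and in all cases $\lambda_n=d_ne_n$ coincides with the value in the statement. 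Thus, in the basis $\{T_n\}$, $J^H=D+M+K$ with $D=\mathrm{diag}(\lambda_n)$, $K$ compact on $W^s$, and $M$ the strictly upper-triangular operator with entries $M_{jn}=-c_j/(2n)$ for $0\le j<n$, $j\equiv n\,(\mathrm{mod}\,2)$.

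\textbf{Spectra of the model operators.} Since $\lambda_n\to-\tfrac14$, $D+\tfrac14\mathbb{I}$ is compact, so $J^H=-\tfrac14\mathbb{I}+M+(\text{compact})$, and it remains to analyze $M$ on $W^s=\ell^2((1+n^2)^s)$. Splitting into even and odd indices and relabeling, $M$ coincides, up to a compact correction, with $-\tfrac14$ times the adjoint Ces\`aro operator on $\ell^2((1+m^2)^s)$; a weighted Schur test shows $M$ is bounded on $W^s$ when $s>-\tfrac12$, and the spectral theory of the weighted Ces\`aro operator --- developed in \cite{LB15} with the tools of \cite{pinto2024shape,YS88} --- identifies its spectrum as the closed disk through $-\tfrac14$ with center $-\tfrac14-\tfrac1{2(4s+2)}$ and radius $\tfrac1{2(4s+2)}$, whose interior consists of eigenvalues. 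Shifting by $-\tfrac14$ gives precisely the complementary part of the statement. On the other hand, $D+M$ is upper-triangular with pairwise distinct diagonal entries $\lambda_n$, so solving the associated finite triangular systems shows that each $\lambda_n$ is an eigenvalue with a finitely supported eigenvector; this is the discrete part. Combining these descriptions and absorbing $K$ into the compact perturbation yields the claim.

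\textbf{Main obstacle.} The crux is that $M$ is \emph{not} compact: it is what produces the two-dimensional region in the spectrum, and pinning down the stated $s$-dependent disk requires the non-self-adjoint, weighted Ces\`aro spectral analysis of \cite{LB15,pinto2024shape}. A secondary, technical difficulty is the bookkeeping on the Chebyshev/Sobolev scales $T^s,W^s,U^s,Y^s$ needed to justify that the discrepancy between $J^H$ and the explicit model $D+M$ is compact on $W^s$ throughout the admissible range $s>-\tfrac12$ --- note in particular that $V_2^{H,w}V_1^{H,w}$ lands back in $W^s$ only thanks to an exact cancellation in the change of basis between $\{T_n\}$ and $\{U_n\}$, not for abstract mapping-property reasons.
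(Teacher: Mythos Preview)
The paper does not provide its own proof of this proposition; it is stated as a recall of the main result of \cite{LB15} (see the sentence immediately preceding Proposition~\ref{prop:helmspectrum}). Your sketch is essentially an outline of the argument in \cite{LB15}: stripping off smoothing remainders, diagonalizing the principal parts of $V_1^{H,w}$ and $V_2^{H,w}$ in the Chebyshev bases $\{T_n\}$ and $\{U_n\}$ via Proposition~\ref{cheb:prop2}, converting back with Proposition~\ref{cheb:prop1}, and then analyzing the resulting upper-triangular model operator. The computation you give, $J_0^H T_n=\lambda_nT_n-\tfrac{1}{2n}\sum_j c_jT_j$, is correct and is the analogue of the display for $\widehat J_0^H$ in the paper's proof of Proposition~\ref{prop:helmspectrum2}.

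The one place your write-up departs from \cite{LB15} (and from the closely parallel argument the paper gives for Proposition~\ref{prop:helmspectrum2}) is the treatment of the complementary part: you package it as the spectrum of an adjoint Ces\`aro-type operator $M$ on a weighted $\ell^2$ space, whereas \cite{LB15} works directly with the single model operator $D+M$ and derives a two-term recurrence $(n+2)f_{n+2}=\big(\cdots\big)(nf_n)$ for the coefficients of a would-be eigenfunction at $z\notin\{\lambda_n\}$, then reads off the $s$-dependent disk from the asymptotics $|nf_n|=O(n^{-2x/(x^2+y^2)})$ and the membership condition $f\in W^s$. Your Ces\`aro viewpoint is equivalent and arguably more conceptual, but note a small logical wrinkle: you extract the discrete part $\{\lambda_n\}$ from $D+M$ and the complementary part from $-\tfrac14\mathbb I+M$, which are two different compact perturbations of $J^H$. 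Under the phrasing ``up to a compact perturbation'' this is harmless, but to match the statement as written you should exhibit both parts as the point spectrum of a \emph{single} operator, namely $J_0^H=D+M$; the recurrence argument of \cite{LB15} does exactly that.
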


We also present the pointwise spectrum of $\widehat{J}^H := V_1^{H,w} V_2^{H,w}$, which plays an important role in the spectral properties of $\bm V_4^w\bm V_3^w$.

\begin{proposition}
\label{prop:helmspectrum2}
    The pointwise spectrum of 
$
\widehat{J}^H :W^{s+1} \to W^{s+1},\,s >-\frac{1}{2}$
  coincides with that of 
$J^H$, up to a compact perturbation.
\end{proposition}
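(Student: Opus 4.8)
The plan is to exploit the fact that $\widehat{J}^H$ and $J^H$ are the two products, in opposite order, of the same pair of operators. Set $A := V_1^{H,w}$ and $B := V_2^{H,w}$, so that $J^H = BA$ and $\widehat{J}^H = AB$. The first step is to record the relevant mapping properties on the Chebyshev scale. From Proposition~\ref{cheb:prop2}, the decays $d_n = O(n^{-1})$ and $e_n = O(n)$, and the fact that the remainders $V_1^{H,w}-V_{1,0}^{H,w}$ and $V_2^{H,w}-V_{2,0}^{H,w}$ — obtained by replacing the Helmholtz kernel by its logarithmic (Laplace) principal part — are smoothing, one extracts that $A:W^s\to W^{s+1}$ and $B:W^{s+1}\to W^s$ are bounded and that
$$
A = A_0 + A_1, \qquad B = B_0 + B_1,
$$
where $A_0 := V_{1,0}^{H,w}$ is diagonalized by $\{T_n\}$ with eigenvalues $d_n$, $B_0 := V_{2,0}^{H,w}$ is diagonalized by $\{U_n\}$ with eigenvalues $e_n$, and $A_1,B_1$ are compact. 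Hence $J^H = B_0A_0 + (\text{compact})$ and $\widehat{J}^H = A_0B_0 + (\text{compact})$, with $B_0A_0$ the explicit operator whose spectrum is computed in Proposition~\ref{prop:helmspectrum}. This is the technical core, and it is precisely the analysis already performed for Proposition~\ref{prop:helmspectrum} along the lines of \cite{LB15,pinto2024shape}, which I would reuse.

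The second step is the elementary identity relating the point spectra of $AB$ and $BA$. Fix $\lambda\neq 0$. If $ABv=\lambda v$ with $v\in W^{s+1}\setminus\{0\}$, put $u:=Bv\in W^s$; then $BAu = B(ABv)=\lambda Bv=\lambda u$, and $u\neq 0$ because $u=0$ would give $\lambda v = A(Bv)=0$, hence $v=0$. So $\lambda\in\sigma_p(J^H)$; the reverse inclusion follows symmetrically by sending an eigenfunction $u$ of $J^H$ to $Au$. Thus the nonzero eigenvalues of $\widehat{J}^H$ and of $J^H$ coincide, and the same computation with $A_0,B_0$ in place of $A,B$ gives that the nonzero eigenvalues of $A_0B_0$ and $B_0A_0$ coincide.

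For $\lambda=0$: since $A_0$ and $B_0$ are diagonalized by the Chebyshev families with all eigenvalues $d_n\neq 0$, $e_n\neq 0$, both are injective with dense range, so $0$ is an eigenvalue of neither $A_0B_0$ nor $B_0A_0$. Combining with the previous step, $\sigma_p(A_0B_0)=\sigma_p(B_0A_0)$. Since $\widehat{J}^H-A_0B_0$ and $J^H-B_0A_0$ are compact, this is exactly the statement that the pointwise spectrum of $\widehat{J}^H$, up to a compact perturbation, agrees with that of $J^H$; by Proposition~\ref{prop:helmspectrum} this common set is the one displayed there. The passage from the segment $\Gamma_0$ to a general analytic arc is then made exactly as for Proposition~\ref{prop:helmspectrum}.

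The anticipated main obstacle is not the spectral bookkeeping but the verification that $B=V_2^{H,w}$ maps $W^{s+1}$ boundedly into $W^s$ with compact remainder: $V_2^H$ is diagonalized (modulo smoothing) by the weighted second-kind Chebyshev polynomials $wU_n$, whereas $W^s$ is defined through first-kind expansions, so boundedness hinges on Hardy/Hilbert-type bounds for the triangular change-of-basis matrices relating $\{T_n\}$ and $\{U_n\}$ in Proposition~\ref{cheb:prop1}. This estimate, however, is exactly the one already underlying Proposition~\ref{prop:helmspectrum}, so nothing new is required; the genuinely new content here is only the $AB\leftrightarrow BA$ observation together with the correct tracking of the one-unit regularity shift between the domains $W^s$ and $W^{s+1}$.
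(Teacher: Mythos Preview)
Your argument is correct and takes a genuinely different route from the paper's. The paper does not use the $AB\leftrightarrow BA$ observation at all: after reducing to the principal parts (the same reduction you invoke), it computes $\widehat{J}^H_0 T_n$ explicitly from Propositions~\ref{cheb:prop1}--\ref{cheb:prop2}, obtaining an upper-triangular operator in the $\{T_n\}$ basis with diagonal entries $e_nd_n$; this immediately yields the discrete part $\Lambda_D$. For the complementary part the paper then derives a two-term recursion $(n+2)f_{n+2}=\rho_n(z)\,(nf_n)$ for the Chebyshev coefficients of a putative eigenfunction and repeats the asymptotic analysis of \cite[Lemma~9]{LB15} to locate exactly the region described in Proposition~\ref{prop:helmspectrum}. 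In other words, the paper essentially redoes the proof of Proposition~\ref{prop:helmspectrum} for the reversed product.

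Your approach is more conceptual and shorter: once the reduction to $A_0,B_0$ is granted, the equality $\sigma_p(A_0B_0)\setminus\{0\}=\sigma_p(B_0A_0)\setminus\{0\}$ transfers both the discrete and the complementary eigenvalues in one stroke, with no recursion analysis needed. In fact you can sharpen it further: since $A_0$ is diagonal on $\{T_n\}$ with $d_n\sim n^{-1}$, it is an isomorphism $W^s\to W^{s+1}$, and $\widehat{J}^H_0=A_0\,J^H_0\,A_0^{-1}$ is a genuine similarity, giving equality of the full point spectra (including $\lambda=0$) and boundedness of $\widehat{J}^H_0$ on $W^{s+1}$ for free. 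This bypasses the $B_0:W^{s+1}\to W^s$ boundedness issue you flag --- that estimate is still implicitly needed to justify that $\widehat{J}^H-\widehat{J}^H_0$ is compact (the term $A_1B_0$), but this is the same kernel-regularity step the paper also absorbs into ``arguing as in \cite{LB15}''. What the paper's hands-on approach buys is that it is entirely self-contained and makes the eigenstructure of $\widehat{J}^H_0$ explicit; what your approach buys is economy and a transparent explanation of \emph{why} the two spectra must agree.
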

\begin{proof}
Arguing as in \cite{LB15} we only need to consider the principal parts of $V_1^{H,w}$ and $V_2^{H,w}$. Hence we are left with the following operator 
\begin{equation}
\widehat{J}^H_0  = \left(  \int_{-1}^1 \frac{-1}{2\pi}\log|t-s| (w(s)^{-1}  \cdot ) ds \right)
\circ
\left( \frac{d}{dt} \int_{-1}^1 \frac{-1}{2\pi}\log|t-s| \frac{d}{ds}(w(s)  \cdot ) ds \right).
\end{equation}
A direct computation using propositions \ref{cheb:prop1} and \ref{cheb:prop2} shows that 
\begin{equation}
\widehat{J}^H_0  T_n  = e_n d_n T_n +(e_n-e_{n-2})\sum_{j \text{ odd/even}}^{n-2} d_j T_j,  
\end{equation}
where the sum on the right-hand vanishes when $n=0,1$. Thus, $\widehat{J}^H_0$ is an upper triangular operator and its diagonal terms  $\{e_n d_n\}_{n \in \mathbb{N}}$, are eigenvalues. On the other hand if we have any other eigenfunction $f = \sum_{n=0}^{\infty} f_n T_n \in W^{s+1}$, we will have that 
\begin{equation}
\widehat{J}^H_0 f = \sum_{n=0}^{\infty} f_n \left(e_n d_n T_n +(e_n-e_{n-2})\sum_{j \text{ odd/even}}^{n-2} d_j T_j\right ) = z\sum_n f_n T_n,
\end{equation}
where $z$ is the corresponding eigenvalue. From the last equation, we deduce that
\begin{equation}
f_n e_n d_n  + \sum_{j=0}^\infty f_{n+2(j+1)} (e_{n+2(j+1)}- e_{n+2j})d_n = z f_n.
\end{equation}
Hence, by combining the expressions for $zf_n$ and $zf_{n+2}$, we obtain 
\begin{equation}
\frac{z}{d_n} f_n - \frac{z}{d_{n+2}}f_{n+2} = e_n(f_n-f_{n+2}).
\end{equation}
Thus, 
\begin{equation}
(n+2)f_{n+2} = (n+2)\left(\frac{\frac{z}{d_n}-e_n}{\frac{z}{d_{n+2}}-e_n}\right)f_n = \left(\frac{z+\frac{1}{4}(1+\frac{1}{n})}{z+\frac{1}{4}(1-\frac{1}{n+2})}\right)(n f_n), \quad n \geq 1.
\end{equation}
Following a proof similar to that of \cite[Lemma 9]{LB15}, the absolute value of $n f_n$ is asymptotically given by 
\begin{equation}
|n f_n|=O(n^{\frac{-2x}{x^2+y^2}}),
\end{equation}
where $x,y \in \mathbb{R}$ satisfying $-x+yi=2(4z+1),\, x>0$. Thus, our conclusion is obtained from $\sum_{n=0}^{\infty} (1+n^2)^{s+1} |f_n|^2 < \infty$.
\end{proof}

Now we are ready to present the equivalent result of Proposition \ref{prop:helmspectrum} for the thermoelastic operators on $\Gamma_0$.
\begin{theorem}
\label{v2v1thermo}
Let $\Gamma=\Gamma_0$. The pointwise spectrum of $\bm{J}^{2,1} : (W^s)^3 \rightarrow (W^s)^3$
   is the union of two sets (up to a compact pertubation), namely $\Lambda^1, \Lambda^2 $, each of them containing a discrete part (denoted by $\Lambda^1_D$, and $\Lambda^2_D$ respectably), and a complementary part (denoted by $\Lambda^1_C$, and $\Lambda^2_C$). Furthermore we have that
   $$
   \Lambda^1_D=C^{(1)}_{\lambda,\mu}C^{(2)}_{\lambda,\mu}\left(\left\lbrace -\frac{\log 2}{4}\right \rbrace\cup \left\lbrace  -\frac{1}{4} - \frac{1}{4n}\right \rbrace_{n \in \mathbb{N}_{>0}} \right), 
   $$
   $$
   \Lambda^2_D=\left\lbrace -\frac{\log 2}{4}\right \rbrace\cup \left\lbrace  -\frac{1}{4} - \frac{1}{4n}\right \rbrace_{n \in \mathbb{N}_{>0}}, 
   $$
   and 
   $$\Lambda^1_C = C^{(1)}_{\lambda,\mu}C^{(2)}_{\lambda,\mu} \left\lbrace
\lambda_x+i \lambda_y: \lambda_x +\frac{1}{4} = r \cos \theta ,\ \lambda_y= r \sin \theta, \ \ 0<r<-\frac{\cos\theta}{4s+2}, \, \frac{\pi}{2}<\theta< \frac{3\pi}{2}  
\right\rbrace ,$$
   $$\Lambda^2_C = \left\lbrace
\lambda_x+i \lambda_y: \lambda_x +\frac{1}{4} = r \cos \theta ,\ \lambda_y= r \sin \theta, \ \ 0<r<-\frac{\cos\theta}{4s+2}, \, \frac{\pi}{2}<\theta< \frac{3\pi}{2}  
\right\rbrace,  $$
where, 
$$C^{(1)}_{\lambda,\mu} = \frac{\lambda +3\mu}{2\mu(\lambda + 2\mu)}, \quad C^{(2)}_{\lambda,\mu} = \frac{2\mu(\lambda +\mu) }{\lambda +2 \mu}.$$
\end{theorem}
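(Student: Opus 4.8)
The plan is to show that, over the straight segment $\Gamma_0$, the operator $\bm{J}^{2,1}=\bm V_2^w\bm V_1^w$ is, modulo a compact operator, block-diagonal with a $2\times 2$ ``elastic'' block $\bm J^{el}$ and a scalar ``thermal'' block $\bm J^{th}$; to identify $\bm J^{th}$ with the Helmholtz composition $J^H$ of Proposition~\ref{prop:helmspectrum} and $\bm J^{el}$ with $C^{(1)}_{\lambda,\mu}C^{(2)}_{\lambda,\mu}\bigl(J^H\otimes I_2\bigr)$; and then to read off the spectrum from that proposition, using that the spectrum of a block-diagonal operator is the union of the spectra of its blocks and that $\mathrm{spec}(cT)=c\,\mathrm{spec}(T)$. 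Below, $\simeq$ denotes equality modulo compact operators between the relevant Chebyshev--Sobolev spaces.

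\textbf{Step 1 (block decoupling and reduction to principal parts).} From the explicit form of $\mathbb{E}$, the off-diagonal blocks $\bm E_{12},\bm E_{21}$ have leading singularity $(\bm x-\bm y)\log|\bm x-\bm y|$, one order milder than that of the diagonal blocks $\mathbb{E}_{11}$ and $E_{22}$. Arguing as in Lemma~\ref{lemma:compactness1}, but now for the weighted operators on $\Gamma_0$ in the scales $T^s,W^s,U^s,Y^s$ (using the mapping properties of \cite{pinto2024shape,LB15}), the $(1,2)$ and $(2,1)$ blocks of both $\bm V_1^w$ and $\bm V_2^w$ are compact; since the remaining blocks are bounded, $\bm{J}^{2,1}\simeq\mathrm{diag}\bigl(\bm J^{el},\bm J^{th}\bigr)$, where $\bm J^{el}$ is built from the elastic $(1,1)$-blocks and acts on $(W^s)^2$, while $\bm J^{th}$ is built from the thermal $(2,2)$-blocks and acts on $W^s$. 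Moreover, replacing inside each diagonal block the Hankel kernels $\gamma_{k_j}$ by their leading logarithmic parts changes each operator by one with a smoother kernel, hence (by standard results, e.g.\ \cite[Ch.~6]{SJG13}) by a compact operator; so it suffices to analyze these ``principal-part'' operators.

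\textbf{Step 2 (the two blocks).} A direct computation shows that the logarithmic coefficient of $E_{22}$ is exactly $-\tfrac{1}{2\pi}$, so the principal parts of $\bm V_{1,22}$ and of $\bm V_{2,22}$ (the latter carrying $\pa_{\bm\nu_{\bm x}}\pa_{\bm\nu_{\bm y}}$) coincide with those of the Helmholtz operators $V_1^H$ and $V_2^H$; incorporating the weights $\mathbb{W}_1,\mathbb{W}_2$ gives $\bm J^{th}\simeq V_2^{H,w}V_1^{H,w}=J^H$, and by Proposition~\ref{prop:helmspectrum} its pointwise spectrum on $W^s$ is $\Lambda^2_D\cup\Lambda^2_C$, up to a compact perturbation. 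For the elastic block, a direct expansion of the Hankel functions together with the identity $\tfrac{k_p^2-k_2^2}{k_1^2-k_2^2}-\tfrac{k_p^2-k_1^2}{k_1^2-k_2^2}=1$ shows that the principal part of $\mathbb{E}_{11}$ is the two-dimensional Kelvin matrix $-\tfrac{\lambda+3\mu}{4\pi\mu(\lambda+2\mu)}\log|\bm x-\bm y|\,I_2+\tfrac{\lambda+\mu}{4\pi\mu(\lambda+2\mu)}(\bm x-\bm y)\otimes(\bm x-\bm y)\,|\bm x-\bm y|^{-2}$. Since $\Gamma_0$ is a straight segment, $(\bm x-\bm y)\otimes(\bm x-\bm y)\,|\bm x-\bm y|^{-2}$ equals the \emph{constant} matrix $\bm\tau\otimes\bm\tau$ and thus contributes a finite-rank operator; hence the $(1,1)$-block of $\bm V_1^w$ (that is, $\bm V_{1,11}$ followed by the weight $w^{-1}$) has principal part $C^{(1)}_{\lambda,\mu}=\tfrac{\lambda+3\mu}{2\mu(\lambda+2\mu)}$ times the logarithmic single-layer operator of Proposition~\ref{cheb:prop2}, acting on each elastic component. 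A parallel analysis of the Kelvin hypersingular kernel obtained by applying $\bm T(\pa_{\bm x},\bm\nu_{\bm x})$ and $\bm T(\pa_{\bm y},\bm\nu_{\bm y})$ to the Kelvin matrix --- using the Maue/integration-by-parts representation of the hypersingular operator and the flatness of $\Gamma_0$ to collapse its direction-dependent terms to constants, exactly as in \cite{XY22} --- shows that the $(1,1)$-block of $\bm V_2^w$ has principal part $C^{(2)}_{\lambda,\mu}=\tfrac{2\mu(\lambda+\mu)}{\lambda+2\mu}$ times the logarithmic hypersingular operator of Proposition~\ref{cheb:prop2}, acting on each elastic component. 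Composing, $\bm J^{el}\simeq C^{(1)}_{\lambda,\mu}C^{(2)}_{\lambda,\mu}\bigl(J^H\otimes I_2\bigr)$, so by Proposition~\ref{prop:helmspectrum} its pointwise spectrum on $(W^s)^2$ is $C^{(1)}_{\lambda,\mu}C^{(2)}_{\lambda,\mu}\bigl(\Lambda^2_D\cup\Lambda^2_C\bigr)=\Lambda^1_D\cup\Lambda^1_C$, up to a compact perturbation.

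\textbf{Conclusion and main difficulty.} Combining Steps~1 and~2, the pointwise spectrum of $\bm{J}^{2,1}$ on $(W^s)^3$ equals, up to a compact perturbation, $\bigl(\Lambda^1_D\cup\Lambda^1_C\bigr)\cup\bigl(\Lambda^2_D\cup\Lambda^2_C\bigr)$, which is the asserted statement; one may also note the consistency relation $-\tfrac14\,C^{(1)}_{\lambda,\mu}C^{(2)}_{\lambda,\mu}=-\tfrac14+C_{\lambda,\mu}^2$ with Theorem~\ref{V12}. I expect the main obstacle to be the elastic part of Step~2: extracting the principal part of the hypersingular block $\bm V_{2,11}$ on $\Gamma_0$ --- handling the Hadamard finite-part regularization, verifying that flatness reduces its matrix structure to a scalar multiple of $I_2$, and pinning down the constant $C^{(2)}_{\lambda,\mu}$ --- together with the bookkeeping needed to guarantee that every discarded term (the off-diagonal blocks, the lower-order kernels, and the constant Kelvin term $\bm\tau\otimes\bm\tau$) is genuinely compact between the correct Chebyshev--Sobolev spaces; this step follows the strategy of \cite{XY22}, while the thermal block and the elastic single-layer computations are elementary. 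The general analytic-arc case then follows analogously, with the curvature corrections treated as compact perturbations.
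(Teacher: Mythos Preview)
Your proposal is correct and follows essentially the same route as the paper: reduce $\bm J^{2,1}$ modulo compacts to a block-diagonal operator, identify the thermal block with $J^H$ and the elastic block with $C^{(1)}_{\lambda,\mu}C^{(2)}_{\lambda,\mu}\,(J^H\otimes I_2)$, and then invoke Proposition~\ref{prop:helmspectrum}. The only cosmetic difference is that for the hypersingular elastic block the paper extracts the principal part $C^{(2)}_{\lambda,\mu}\,D_s\!\circ\!\log\!\circ\!D_s$ directly from the regularized representation in Theorem~\ref{regV3} (recall $\bm V_{2,11}^w=\bm V_{4,11}^w$), rather than appealing to \cite{XY22}; your flat-segment observation that $(\bm x-\bm y)\otimes(\bm x-\bm y)/|\bm x-\bm y|^2$ is constant and hence contributes a finite-rank term is exactly what makes that extraction yield a scalar multiple of $I_2$.
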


\begin{proof}
    From the definition of the fundamental solution, the principal part of $\bm{V}^w_1$ gives rise to the following operator, 
\begin{align*}
    \bm{V}^{S,w}_1 \begin{pmatrix} \bm{\psi}\\ \varphi\end{pmatrix} (\bm{x})= \begin{pmatrix}
        C_{\lambda, \mu }^{(1)}\int_{\Gamma_0}\frac{-1}{2\pi} \log \|\bm x - \bm y \| (w^{-1}(\bm y) \cdot) \mathbb{I}_2 d\bm y & \bm{0} \\ 
        \bm{0}^\top  & \int_{\Gamma_0} \frac{-1}{2\pi }\log \|\bm x - \bm y \| (w^{-1}(\bm y) \cdot) d \bm y
    \end{pmatrix}
    \begin{pmatrix} \bm{\psi}\\ \varphi\end{pmatrix},
\end{align*}    
on the other hand, from the formulas in Theorem \ref{regV3} we have that the principal part of $\bm{V}^w_2$ gives rise to the following operator 
\begin{align*}
&\bm{V}^{S,w}_2 \begin{pmatrix} \bm{\psi}\\ \varphi\end{pmatrix} (\bm{x})\\
= &\begin{pmatrix}
        C_{\lambda, \mu }^{(2)}D_{s,\bm x}\int_{\Gamma_0}\frac{-1}{2\pi} \log \|\bm x - \bm y \|   D_{s,\bm y}(w(\bm y)\cdot ) \mathbb{I}_2 d\bm y & \bm{0} \\ 
        \bm{0}^\top  & D_{s,\bm x}\int_{\Gamma_0} \frac{-1}{2\pi }\log \|\bm x - \bm y \| D_{s,\bm y} (w(\bm y) \cdot) d \bm y
    \end{pmatrix}
    \begin{pmatrix} \bm{\psi}\\ \varphi\end{pmatrix},   
\end{align*}    
where $D_{s,\cdot}$ denotes the tangential derivative on the appropriate variable. Then we obtain that the spectrum of $\bm J^{2,1}$ is given (up to compact perturbations)  by the union of the spectrums of 
$$
\bm{j}_{11}^{2,1} =  \left(C_{\lambda, \mu }^{(2)}D_{s,\bm x}\int_{\Gamma_0}\frac{1}{2\pi} \log \|\bm x - \bm y \|  D_{s,\bm y}(w(\bm y ) \cdot) \mathbb{I}_2 d\bm y   \right)\circ  \left(   C_{\lambda, \mu }^{(1)}\int_{\Gamma_0}\frac{-1}{2\pi} \log \|\bm x - \bm y \| (w^{-1} (\bm y)\cdot )\mathbb{I}_2 d\bm y\right),
$$
$$
\bm{j}_{22}^{2,1} =  \left(D_{s,\bm x}\int_{\Gamma_0}\frac{1}{2\pi} \log \|\bm x - \bm y \|  D_{s,\bm y} (w(\bm y)\cdot )d\bm y   \right)\circ  \left(   \int_{\Gamma_0}\frac{-1}{2\pi} \log \|\bm x - \bm y \| (w^{-1}(\bm y) \cdot)d\bm y\right).
$$
The case of $\bm{j}^{2,1}_{22}$ is identical to the Helmholtz case. Thus, $\Lambda^2$ is obtained from Proposition \ref{prop:helmspectrum}. Similarly, the spectrum of $\bm{j}^{2,1}_{22}$ also follows from Proposition \ref{prop:helmspectrum}, as the operators are the same but with an scaling factor $C^{(1)
}_{\lambda, \mu} C^{(2)}_{\lambda , \mu }$.
\end{proof}

\begin{corollary}
\label{v4v3thermo}
Let $\Gamma=\Gamma_0$. The pointwise spectrum of 
$\bm{J}^{4,3}:(W^s)^2 \times W^{s+1} \to (W^s)^2 \times W^{s+1}$
  coincides with that of 
$\bm{J}^{2,1}$ up to a compact perturbation.
\end{corollary}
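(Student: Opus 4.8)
The plan is to run the argument of Theorem~\ref{v2v1thermo} for the pair $(\bm V_3^w,\bm V_4^w)$ in place of $(\bm V_1^w,\bm V_2^w)$, the one essential difference being that the scalar (temperature) block now produces the composition $\widehat J^H=V_1^{H,w}V_2^{H,w}$ rather than $J^H=V_2^{H,w}V_1^{H,w}$, so that Proposition~\ref{prop:helmspectrum2} and the shifted space $W^{s+1}$ are used instead of Proposition~\ref{prop:helmspectrum} and $W^s$. First I would insert the block representations of $\bm V_3,\bm V_4$ from Lemma~\ref{lemma:repPQ} into the regularized formulas of Theorem~\ref{regV3} to read off the principal parts of $\bm V_3^w,\bm V_4^w$ on $\Gamma_0$. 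Modulo compact operators, the $2\times2$ block of $\bm V_3^w$ is $C^{(1)}_{\lambda,\mu}$ times the componentwise weighted logarithmic operator and its $(2,2)$ block is the scalar weighted hypersingular operator $V_2^{H,w}$; symmetrically, the $2\times2$ block of $\bm V_4^w$ is $C^{(2)}_{\lambda,\mu}$ times the componentwise weighted hypersingular operator and its $(2,2)$ block is the scalar weighted weakly-singular operator $V_1^{H,w}$. The off-diagonal blocks of both $\bm V_3^w$ and $\bm V_4^w$ are weighted versions of $\bm K_{12},\bm K'_{21},\bm K_{21},\bm K'_{12}$, which are compact by Lemma~\ref{lemma:compactness1} together with the fact that the leading kernel $(\bm x-\bm y)\log\|\bm x-\bm y\|$ is smooth enough to absorb the endpoint weight.

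I would then multiply the two factors. Since the off-diagonal entries of both factors are compact and the principal parts are bounded on the relevant Chebyshev-based spaces, $\bm J^{4,3}=\bm V_4^w\bm V_3^w$ is, up to a compact perturbation, block-diagonal. Its $2\times2$ block is $C^{(1)}_{\lambda,\mu}C^{(2)}_{\lambda,\mu}$ times the componentwise weighted hypersingular operator composed with the componentwise weighted logarithmic operator --- which is exactly the operator $\bm j^{2,1}_{11}$ from the proof of Theorem~\ref{v2v1thermo} --- so its pointwise spectrum on $(W^s)^2$ equals $\Lambda^1$ up to a compact perturbation. Its scalar block is $V_1^{H,w}$ composed with $V_2^{H,w}$, i.e. $\widehat J^H$; this is precisely why the temperature component lives in $W^{s+1}$, and Proposition~\ref{prop:helmspectrum2} gives that the pointwise spectrum of $\widehat J^H:W^{s+1}\to W^{s+1}$ coincides, up to a compact perturbation, with that of $J^H:W^s\to W^s$, namely $\Lambda^2$. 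Assembling the two blocks, the pointwise spectrum of $\bm J^{4,3}$ is $\Lambda^1\cup\Lambda^2$ up to a compact perturbation, which is exactly the spectrum of $\bm J^{2,1}$ from Theorem~\ref{v2v1thermo}.

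The step I expect to be the main obstacle is the bookkeeping that upgrades ``principal part plus compact'' for each factor to ``block-diagonal principal part plus compact'' for the product: one must verify that every cross term in the product containing at least one compact (off-diagonal or lower-order diagonal) piece is itself compact between the Chebyshev spaces in play, which forces one to track the mapping properties of the accompanying bounded factor carefully across the index shift ($W^s$ versus $W^{s+1}$) in the temperature slot. One should also confirm that the unbounded endpoint weight $w^{-1}$ does not spoil the compactness of the weighted off-diagonal operators --- this holds because $\bm K_{12},\bm K_{21}$ and their adjoints have a continuous (indeed $C^1$) leading kernel --- but it is worth spelling out within the functional framework of \cite{pinto2024shape,XY22}. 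Everything else is routine, given Theorem~\ref{v2v1thermo} and Proposition~\ref{prop:helmspectrum2}.
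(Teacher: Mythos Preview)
Your proposal is correct and follows essentially the same route as the paper: use Lemma~\ref{lemma:repPQ} and Lemma~\ref{lemma:compactness1} to discard the off-diagonal blocks of $\bm V_3^w$ and $\bm V_4^w$ as compact, identify the resulting $2\times2$ diagonal block with the first block of $\bm J^{2,1}$ (spectrum $\Lambda^1$), and handle the scalar block $\bm V_{1,22}^w\bm V_{2,22}^w=\widehat J^H$ via Proposition~\ref{prop:helmspectrum2}. The paper's proof is terser and does not spell out the product-of-compact-plus-bounded bookkeeping or the weight issue you flag, but the argument is the same.
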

\begin{proof}
From lemmas \ref{lemma:repPQ} and \ref{lemma:compactness1}, we know that the off-diagonal terms $\bm{V}^w_3$ and $\bm{V}^w_4$ are compact. Thus, we only need to consider the spectrum of 
$\bm{V}_{2,11} \circ w \circ \bm{V}_{1,11}\circ w^{-1}$ and $\bm{V}_{1,22} \circ w^{-1} \circ \bm{V}_{2,22} \circ w$. The first one is equal to the first block of $\bm{J}^{2,1}$ whose spectrum is $\Lambda_1$, while the second one is obtained from Proposition \ref{prop:helmspectrum2}.
\end{proof}

It can be seen from Theorem \ref{v2v1thermo} and Corollary \ref{v4v3thermo} that the point spectra of the operators $\bm{V}_2^w \bm{V}_1^w$ and $\bm{V}_4^w \bm{V}_3^w$ are bounded away from zero and infinity. In addition, the discrete spectrum $\Lambda_D^1$ and $\Lambda_D^2$ have two cluster points, $-\frac{1}{4}$ and 
$$-\frac{1}{4}C_{\lambda,\mu}^{(1)}C_{\lambda,\mu}^{(2)}=-\frac{1}{4}[1-(\frac{\mu}{\lambda+2\mu})^2]=-\frac{1}{4}+C_{\lambda,\mu}^2,$$
which are exactly the same as those in the closed-surface case in Theorem \ref{V12}. Utilizing the regularized formulations in Section \ref{sec:3.3} and the Nystr\"om methodology in Section \ref{sec:4.1}, we have verified these spectral properties numerically. Figure \ref{V4V3_eig} displays the eigenvalue distributions of the operator $\bm{V}_4 \bm{V}_3$ (defined on the unit circle) and the operator $\bm{V}_4^w \bm{V}_3^w$ (defined on $\Gamma_0$) for different values of $\lambda$, where the parameters $\omega=50,\rho=1,\mu=1,\kappa=1, \eta=0.2, \gamma=0.1$ are selected.  Note that, in all cases, the eigenvalues of $\bm{V}_4 \bm{V}_3$ and ${\bm{V}_4^w \bm{V}_3^w}$  accumulate
 around $(-\frac{1}{4}+C_{\lambda,\mu}^2 ,0)$ and $(-\frac{1}{4},0)$, as predicted by previous results. The operators $\bm{V}_2 \bm{V}_1$ and $\bm{V}_2^w \bm{V}_1^w$ have similar spectrum distributions so we omit the figures here. 

\begin{remark}
For general smooth open-arc $\Gamma$ parameterized as $\bm x=\bm x(t)=(x_1(t),x_2(t))^\top\in\Gamma, t\in(-1,1)$ with $\mathcal{J}(t)=|\bm x'(t)|\ne 0$, it can be shown following \cite[Theorem 4.3]{XY22} that the pointwise spectrum of $\bm{J}^{2,1}$ and $\bm{J}^{4,3}$ coincides with that in Theorem~\ref{v2v1thermo} up to a compact perturbation. We prefer to omitting the details here.
\end{remark}

% \begin{remark}
% The Nystr\"om discretization methodology is essentially to approximate the BIOs by different finite-dimensional operators, corresponding to boundary conditions. For example, $J^H: W^s \rightarrow W^s$ can be approximated by $J^H_N= P_N J^H P_N$, where $W_N  = \text{span} \left\lbrace T_n \right\rbrace_{n=0}^N \subset W^s$ and $P_N$ denoting its projection operator. Since the strong limit of finite-dimensional operators is compact, we can redefine $J^H$ to be compact as $J^H : W^s \rightarrow W^t, \,t<s$ \cite{pinto2024shape} \footnote{Other possibility will be to restrict its domain as $J^H : W^x \rightarrow W^s$ with $x > s$.}. Following a similar proof in \cite[Section 13.5]{K13}, it is concluded that $\| J^H_N - J^H \|_{\mathcal{L}(W^s,W^t)} \leq C N^{t-s}$ holds for some $C >0$. Moreover, the convergence of the operator norm leads to the convergence of the spectrum (see, \cite[Proposition 7.4]{duffy10}), providing a basis for our numerical verification. 
% \end{remark}

\begin{figure}[htbp]\small
\centering
\begin{tabular}{cc}
\includegraphics[scale=0.3]{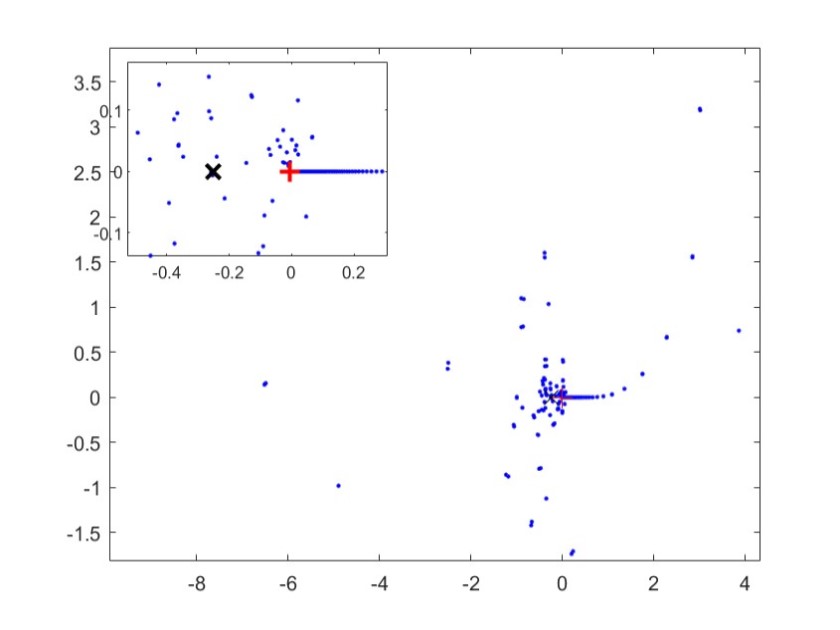} &
\includegraphics[scale=0.3]{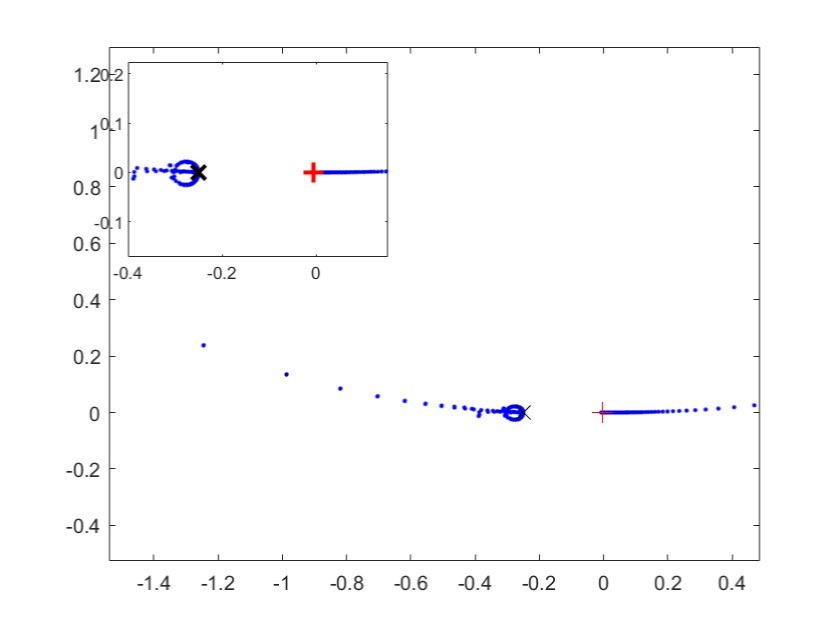} \\
(a) $eig(\bm V_4\bm V_3),\lambda=-0.99$ &
(b) $eig(\bm V_4^w\bm V_3^w),\lambda=-0.99$ \\
\includegraphics[scale=0.3]{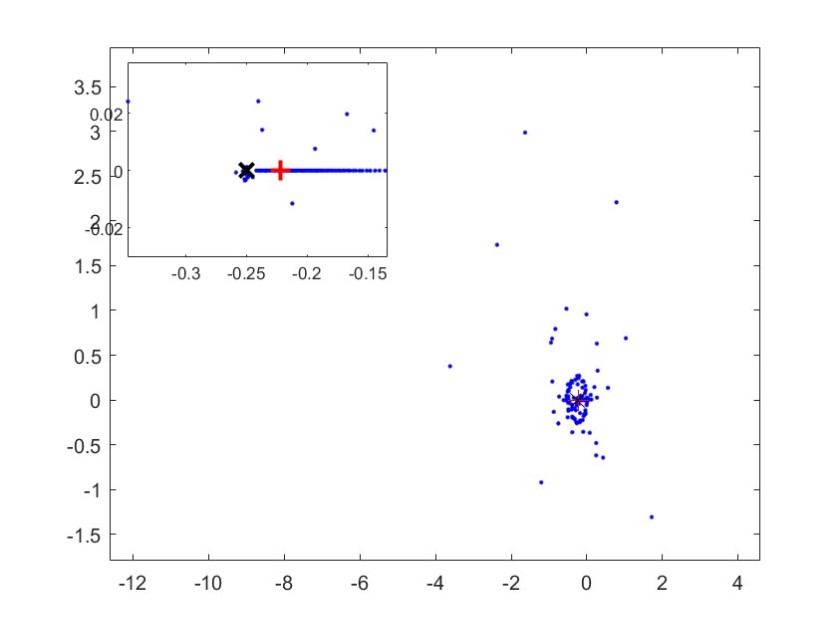} &
\includegraphics[scale=0.3]{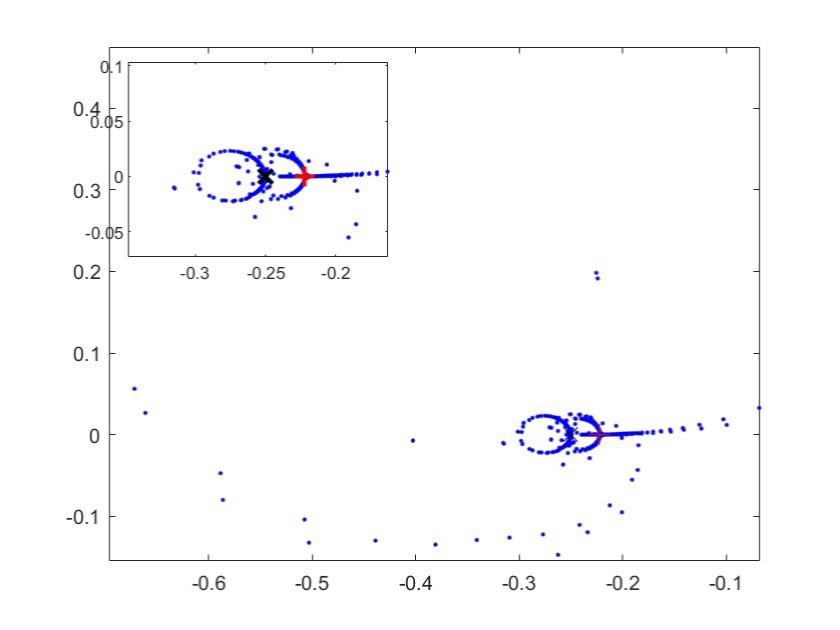}\\
(c) $eig(\bm V_4\bm V_3),\lambda=1$ & 
(d) $eig(\bm V_4^w\bm V_3^w),\lambda=1$ \\
\includegraphics[scale=0.3]{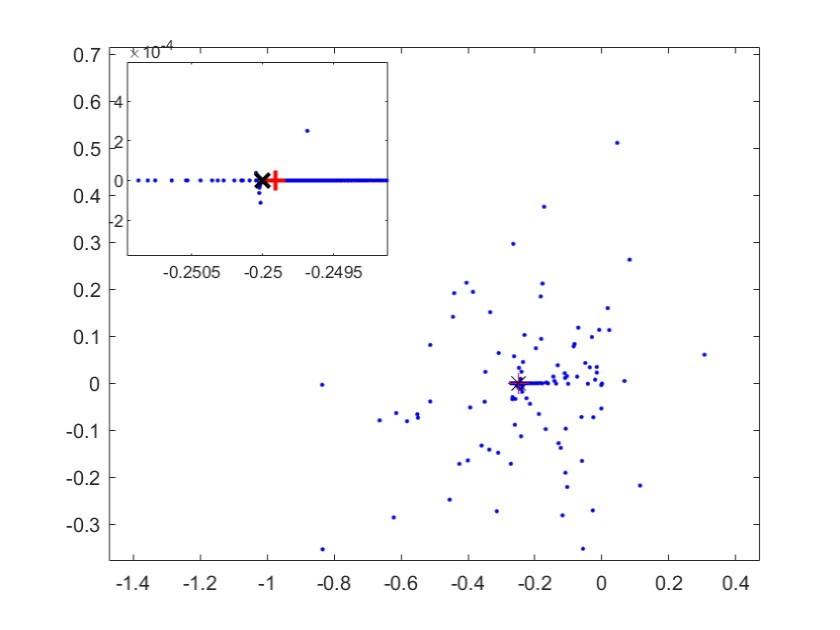} &
\includegraphics[scale=0.3]{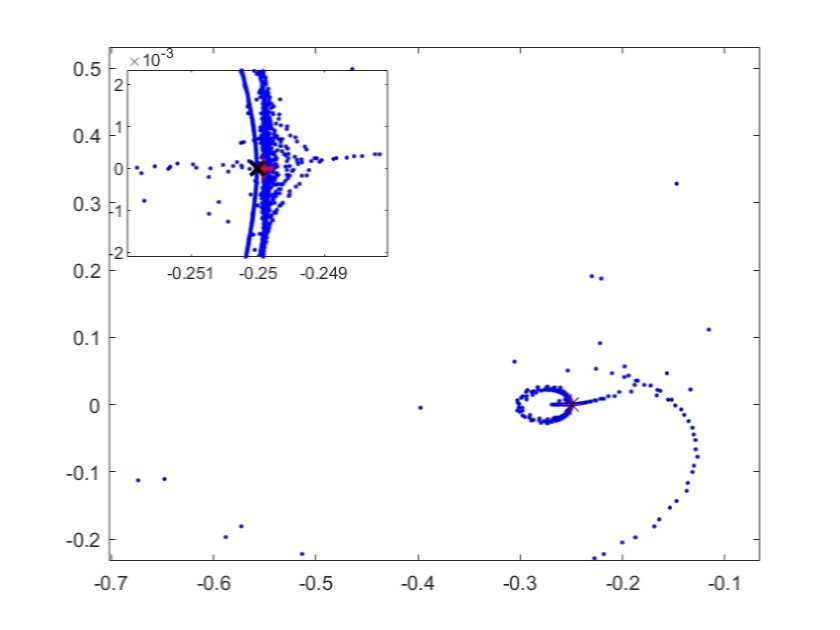} \\ 
(e) $eig(\bm V_4\bm V_3),\lambda=50$ &
(f) $eig(\bm V_4^w\bm V_3^w),\lambda=50$
\end{tabular}
\caption{Eigenvalue distribution of $\bm V_4\bm V_3$ and $\bm V_4^w\bm V_3^w$ for different values of $\lambda$. $(+): (C_{\lambda,\mu}^2-\frac{1}{4},0)$; $(\times): (-\frac{1}{4},0)$.}
\label{V4V3_eig}
\end{figure}

\subsection{Regularization of Singular Integrals}
\label{sec:3.4}
Now we are ready to study the regularization techniques for the treatment of strongly- and hyper-singular integrals in numerical evaluation. To avoid repetitive and redundant deductions, here we only present the result for the operator $\bm V_4^w$. The regularized formulations for $\bm V_i^w$, $i=2,3$ can be derived in an analogous manner and we present the details in the appendix.

Denoting 
\ben
\bm V_i^w[\bm\psi]=\begin{pmatrix}
\bm V_{i,11}^w & \bm V_{i,12}^w\\
\bm V_{i,21}^w & V_{i,22}^w
\end{pmatrix}\begin{pmatrix}
\bm\phi \\ \psi
\end{pmatrix}, \quad i=2,3,4,
\enn
it follows that
\begin{align*}
\bm V_{4,11}^w[\bm\phi](x)=& \int_\Gamma \big[\bm T(\pa_{\bm x},\bm\nu_{\bm x})(\bm T(\pa_{\bm y},\bm\nu_{\bm y})\mathbb{E}_{11}(\bm x,\bm y))^{\top}-i\omega\eta \bm T(\pa_{\bm x},\bm\nu_{\bm x})\bm E_{21}(\bm x,\bm y)\bm\nu_{\bm y}^{\top}   \\
&\,-\gamma {\bm\nu}_{\bm x}(\bm T(\pa_{\bm y},\bm\nu_{\bm y})\bm E_{12}(\bm x,\bm y))^{\top}+i\omega\eta\gamma\bm\nu_{\bm x} \bm \nu_{\bm y}^{\top}E_{22}(\bm x,\bm y)\big] w(\bm y) \bm \phi(\bm y)ds_{\bm y},\quad  \bm x\in\Gamma,\\
\bm V_{4,12}^w[\psi](x)=& \int_\Gamma \left[-\bm T(\pa_{\bm x},\bm\nu_{\bm x}) \bm E_{21}(\bm x,\bm y) +\gamma \bm\nu_{\bm x} E_{22}(\bm x,\bm y)\right]\frac{\psi(\bm y)}{w(\bm y)}ds_{\bm y} ,\quad \bm x\in\Gamma,\\
\bm V_{4,21}^w[\bm\phi](x)=& \int_\Gamma \left[-(\bm T(\pa_{\bm y},\bm\nu_{\bm y})\bm E_{12}(\bm x,\bm y))^{\top}+i\omega\eta \bm \nu_{\bm y}^{\top}E_{22}(\bm x,\bm y)\right] w(\bm y)\bm \phi(\bm y) ds_{\bm y} ,\quad x\in\Gamma,\\
\bm V_{4,22}^w[\psi](x)=& \int_\Gamma E_{22}(\bm x,\bm y) \frac{\psi(\bm y)}{w(\bm y)}ds_{\bm y} ,\quad \bm x\in\Gamma.  
\end{align*}
The regularized formulations for $\bm V_{4,11}^w, \bm V_{4,12}^w$ and $\bm V_{4,21}^w$ are concluded in the following theorem. Several new notations will be used. Denote $\mathbb{J}_{\bm \nu_{\bm x} ,\bm \nu_{\bm y}}:=\bm\nu_{\bm y} \bm \nu_{\bm x}^{\top}-\bm\nu_{\bm x} \bm \nu_{\bm y}^{\top}$. The constants $C_i, \,i=1,2,3$ are defined by
\begin{align*}
C_1&=\frac{i\omega\eta\gamma(k_p^{2}+k_1^{2})-k_1^{2}(k_1^{2}-q)(\lambda+2\mu)}{k_1^{2}-k_2^{2}}, \\
C_2&=\frac{i\omega\eta\gamma(k_p^{2}+k_2^{2})-k_2^{2}(k_2^{2}-q)(\lambda+2\mu)}{k_1^{2}-k_2^{2}}, \\
C_3&=\frac{2\mu}{k_1^{2}-k_2^{2}}(\frac{i\omega\eta\gamma}{\lambda+2\mu}-k_2^{2}+q). 
\end{align*}
The weighted tangential operator $D_s^w$ is given by
\ben
D_s^w[v]= w^2D_s[v] + \frac{v}{2}D_s[w^2],
\enn
with $D_s$ denoting the tangential derivative. To simplify the proof of the regularized formulations, the G\"unter derivative $\bm M(\pa, \bm \nu)$ \cite{BSY20,HW08} is introduced in the form of
\ben 
\bm M(\pa,\bm \nu) \bm u= \pa_{\bm \nu} \bm u-\bm \nu (\nabla \cdot \bm u) + \bm \nu \times \mbox{curl}  \bm u.
\enn
Moreover, the G\"unter derivative and the tangential derivative satisfy the following relation:
\ben
\bm M(\pa, \bm \nu) \bm u = \mathbb{A} D_s [\bm u] , \quad \mathbb{A} =\begin{pmatrix}  0 & -1 \\ 1 & 0 \end{pmatrix}.
\enn

\begin{theorem}
\label{regV3}
The integral operators $\bm V_{4,11}^w, \bm V_{4,12}^w$ and $\bm V_{4,21}^w$ can be re-expressed as
\begin{align}
\label{RegV311}
& \bm V_{4,11}^w = \bm V_{4,11}^{w,1} + D_s\bm V_{4,11}^{w,2}D_s^w + \bm V_{4,11}^{w,3}D_s^w+ D_s\bm V_{4,11}^{w,4} ,\\
\label{RegV312}
& \bm V_{4,12}^w = \bm V_{4,12}^{w,1} + D_s\bm V_{4,12}^{w,2}, \\
\label{RegV313}
& \bm V_{4,21}^w= \bm V_{4,21}^{w,1}+ \bm V_{4,21}^{w,2}D_s^w,
\end{align}
where $\bm V_{4,11}^{w,1}[\bm\phi]=\bm V_{4,11}^{1}[w\bm\phi]$, $\bm V_{4,11}^{w,2}[\bm\phi]=\bm V_{4,11}^{2}[w^{-1}\bm\phi]$, $\bm V_{4,11}^{w,3}[\bm\phi]=\bm V_{4,11}^{3}[w^{-1}\bm\phi]$, $\bm V_{4,11}^{w,4}[\bm\phi]=\bm V_{4,11}^{4}[w\bm\phi]$, $\bm V_{4,12}^{w,1}[\psi]=\bm V_{4,12}^{1}[w^{-1}\psi]$, $\bm V_{4,12}^{w,2}[\psi]=\bm V_{4,12}^{2}[w^{-1}\psi]$, $\bm V_{4,21}^{w,1}[\bm\phi]=\bm V_{4,21}^{1}[w\bm\phi]$ and $\bm V_{4,21}^{w,2}[\bm\phi]=\bm V_{4,21}^{2}[w^{-1}\bm\phi]$, and the integral operators
\begin{align*}
\bm V_{4,11}^{1}[\bm\phi]= & -\rho \omega^2 \int_\Gamma \gamma_{k_s}(\bm x,\bm y)(\bm\nu_{\bm x} \bm \nu_{\bm y}^{\top}-\bm\nu_{\bm x}^{\top}\bm \nu_{\bm y} \mathbb{I}_2-\mathbb{J}_{\bm \nu_{\bm x} ,\bm \nu_{\bm y}})\bm\phi(\bm y)d s_{\bm y}\\
& -\int_{\Gamma}[C_1 \gamma_{k_1}(\bm x,\bm y)-C_2 \gamma_{k_2}(\bm x,\bm y)]\bm\nu_{\bm x} \bm \nu_{\bm y}^{\top} \bm\phi(\bm y)d s_{\bm y},\\
\bm V_{4,11}^{2}[\bm\phi] = & \int_\Gamma \left[4\mu\gamma_{k_s}(\bm x,\bm y) \mathbb{I}_2+ 4\mu^2 \mathbb{A} \mathbb{E}_{11}(\bm x,\bm y) \mathbb{A} \right]\bm\phi(\bm y)d s_{\bm y},\\
\bm V_{4,11}^{3}[\bm\phi] = &  \int_\Gamma \bm\nu_{\bm x} \nabla_{\bm x}^{\top} [-2\mu \gamma_{k_s}(\bm x,\bm y)+(2\mu-C_3)\gamma_{k_1}(\bm x,\bm y)+C_3\gamma_{k_2}(\bm x,\bm y)] \mathbb{A} \bm\phi(\bm y)d s_{\bm y},\\
\bm V_{4,11}^{4}[\bm\phi] = &  \int_\Gamma \mathbb{A}\nabla_{\bm y} [-2\mu \gamma_{k_s}(\bm x,\bm y)+(2\mu-C_3)\gamma_{k_1}(\bm x,\bm y)+C_3\gamma_{k_2}(\bm x,\bm y)] \bm\nu_{\bm y}^\top \bm\phi(\bm y)d s_{\bm y},\\
\bm V_{4,12}^{1}[\psi]= & -\frac{\gamma k_p^2}{k_1^2 - k_2^2}\int_\Gamma \bm\nu_{\bm x}\left[\gamma_{k_1}(\bm x,\bm y)- \gamma_{k_2}(\bm x,\bm y)\right] \psi(\bm y)ds_{\bm y},\\
\bm V_{4,12}^{2}[\psi]= & \frac{2\mu\gamma}{(\lambda  + 2\mu )(k_1^2 - k_2^2)} \int_\Gamma \mathbb{A}\nabla_{\bm x}\left[\gamma_{k_1}(\bm x,\bm y) - \gamma _{k_2}(\bm x,\bm y)\right]\psi(\bm y)ds_{\bm y},\\
\bm V_{4,21}^{1}[\bm\phi]= & -\frac{i\omega\eta k_p^2}{k_1^2 - k_2^2}\int_\Gamma \left[\gamma_{k_1}(\bm x,\bm y)- \gamma_{k_2}(\bm x,\bm y)\right]\bm\nu_{\bm y}^\top \bm\phi(\bm y)ds_{\bm y},\\
\bm V_{4,21}^{2}[\bm\phi]= & -\frac{2i\omega\eta\mu}{(\lambda  + 2\mu )(k_1^2 - k_2^2)}\int_\Gamma \nabla_{\bm x}^\top\left[\gamma_{k_1}(\bm x,\bm y) - \gamma _{k_2}(\bm x,\bm y)\right] \mathbb{A}\bm\phi(\bm y)ds_{\bm y},
\end{align*}
are all at most weakly-singular.
\end{theorem}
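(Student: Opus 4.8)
The plan is to establish the three operator identities \eqref{RegV311}--\eqref{RegV313} by trading every second-order differentiation responsible for a strong- or hyper-singularity for either a tangential derivative $D_s$ pulled outside the integral in the free variable $\bm x$, or — after a tangential integration by parts — the operator $D_s^w$ acting against the density in the variable $\bm y$, while the residual kernels are reduced to at most $O(\log|\bm x-\bm y|)$ by means of the scalar Helmholtz equations $\Delta\gamma_{k_j}=-k_j^2\gamma_{k_j}$, $j\in\{s,1,2\}$. I describe the argument for the hyper-singular operator $\bm V_{4,11}^w$; the operators $\bm V_{4,12}^w$ and $\bm V_{4,21}^w$ are treated in the same way and are strictly simpler.

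First I would split each traction operator by means of the G\"unter derivative, using $\bm T(\partial,\bm\nu)=2\mu\,\bm M(\partial,\bm\nu)+(\lambda+2\mu)\,\bm\nu\,\mathrm{div}-\mu\,\bm\nu\times\mathrm{curl}$ together with $\bm M(\partial,\bm\nu)=\mathbb{A}D_s$. Applying $\bm T(\partial_{\bm y},\bm\nu_{\bm y})$ to $\mathbb{E}_{11}$, using $\partial_{x_i}\partial_{x_j}\gamma_k=\partial_{y_i}\partial_{y_j}\gamma_k$ together with the Helmholtz equations, the $\mathrm{div}_{\bm y}$-part becomes a $\bm y$-gradient whose $\gamma_{k_s}$-contributions cancel exactly because $\rho\omega^2=\mu k_s^2$, leaving only gradients of $\gamma_{k_1}$ and $\gamma_{k_2}$; the $\mathrm{curl}_{\bm y}$-part collapses to a single first derivative of $\gamma_{k_s}$, and the $\bm M_{\bm y}$-part is $\mathbb{A}D_{s,\bm y}\mathbb{E}_{11}$. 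Repeating this with $\bm T(\partial_{\bm x},\bm\nu_{\bm x})$ — and, for the coupling terms, with $\bm E_{21}$, $\bm E_{12}$ (both proportional to $\nabla_{\bm x}(\gamma_{k_1}-\gamma_{k_2})$) and $E_{22}$ — one is left with a finite list of terms splitting into four groups: pure $D_{s,\bm x}(\cdots)$ terms, pure $(\cdots)D_{s,\bm y}(\cdots)$ terms, mixed $D_{s,\bm x}(\cdots)D_{s,\bm y}(\cdots)$ terms, and ``bulk'' terms carrying $\bm\nu_{\bm x}$, $\bm\nu_{\bm y}$ and at most one normal derivative.

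For the bulk terms I would use $\nabla_{\bm y}\gamma_{k_j}=-\nabla_{\bm x}\gamma_{k_j}$, the Helmholtz equations once more, the geometric estimates $\bm\nu_{\bm y}\cdot(\bm x-\bm y)=O(|\bm x-\bm y|^2)$ and $\bm\nu_{\bm x}\cdot(\bm x-\bm y)=O(|\bm x-\bm y|^2)$ (valid on a smooth arc, turning every $\bm\nu$-contracted normal derivative of a $\gamma_{k_j}$ into a bounded kernel), and the algebraic identities relating $\bm\nu_{\bm x}\bm\nu_{\bm y}^\top$, $\bm\nu_{\bm x}^\top\bm\nu_{\bm y}\mathbb{I}_2$ and $\mathbb{J}_{\bm\nu_{\bm x},\bm\nu_{\bm y}}$; collecting the coefficients of $\gamma_{k_1}$ and $\gamma_{k_2}$ then forces the constants $C_1,C_2,C_3$ and the $\log$-free combination $-2\mu\gamma_{k_s}+(2\mu-C_3)\gamma_{k_1}+C_3\gamma_{k_2}$ to appear. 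Then comes the tangential integration by parts in $\bm y$: since the effective density carries the factor $w(\bm x(t))=\sqrt{1-t^2}$, which vanishes at the endpoints $t=\pm1$, and the kernels are by then at most weakly singular, the endpoint terms drop out, and one may substitute $D_{s,\bm y}[w\bm\phi]=w^{-1}D_s^w[\bm\phi]$ using $D_s^w[v]=w\,D_s[wv]=w^2D_s[v]+\tfrac v2 D_s[w^2]$; absorbing the residual factors $w^{\pm1}$ into the new operators ($\bm V^{w,k}_{4,11}=\bm V^{k}_{4,11}\circ w^{\pm1}$) and pulling the surviving $D_{s,\bm x}$ outside the integral as $D_s$ yields \eqref{RegV311}. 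For $\bm V_{4,12}^w$ only $\bm T(\partial_{\bm x},\cdot)$ is present and $\bm E_{21}\propto\nabla_{\bm x}(\gamma_{k_1}-\gamma_{k_2})$ is one order smoother (the $\log$-parts of $\gamma_{k_1}$ and $\gamma_{k_2}$ coincide), so extracting the single $D_s$ from the G\"unter part gives \eqref{RegV312}; for $\bm V_{4,21}^w$ the same applies to $\bm E_{12}$, and one tangential integration by parts in $\bm y$ gives \eqref{RegV313}, the bulk part combining with the $i\omega\eta\,\bm\nu_{\bm y}^\top E_{22}$ contribution to collapse to $-\tfrac{i\omega\eta k_p^2}{k_1^2-k_2^2}(\gamma_{k_1}-\gamma_{k_2})\bm\nu_{\bm y}^\top$. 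The ``at most weakly-singular'' claim is then immediate from the explicit kernels: each one is either a $\gamma_{k_j}$ (hence $O(\log|\bm x-\bm y|)$) times a bounded geometric factor, or a first or second gradient of a $\log$-free combination such as $\gamma_{k_1}-\gamma_{k_2}$ or $-2\mu\gamma_{k_s}+(2\mu-C_3)\gamma_{k_1}+C_3\gamma_{k_2}$, hence bounded.

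The main obstacle is the bulk-term bookkeeping: one must carry the full symbol of $\bm T(\partial_{\bm x},\bm\nu_{\bm x})\big(\bm T(\partial_{\bm y},\bm\nu_{\bm y})\mathbb{E}_{11}\big)^\top$ along with the three coupling terms, commit to a canonical way of splitting each gradient into normal and tangential parts (and of when to replace $\nabla_{\bm y}$ by $-\nabla_{\bm x}$), and verify that after all Helmholtz substitutions every non-weakly-singular contribution either cancels identically or is absorbed into one of the $D_s(\cdots)$, $(\cdots)D_s^w$, $D_s(\cdots)D_s^w$ compositions with exactly the coefficients in the statement. The computation is lengthy but mechanical; the two points requiring genuine care are the cancellation enabled by $\rho\omega^2=\mu k_s^2$ in the $\mathrm{div}$-part of the first traction and the vanishing of the $\log$-coefficient $-2\mu+(2\mu-C_3)+C_3=0$ in the combination appearing in $\bm V^{3}_{4,11}$ and $\bm V^{4}_{4,11}$.
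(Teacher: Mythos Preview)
Your proposal is correct and follows essentially the same route as the paper. The paper's proof is much terser: for $\bm V_{4,11}^w$ it simply points to \cite[Lemma~3.3]{ZXY21} (whose kernel has the same structure) and notes that the only extra care needed is rewriting $D_{s}(w\bm\phi)$ as $w^{-1}D_s^w[\bm\phi]$; for $\bm V_{4,12}^w$ and $\bm V_{4,21}^w$ it records the single identity $\bm T(\partial,\bm\nu)\nabla=(\lambda+2\mu)\bm\nu\Delta+2\mu\,\bm M(\partial,\bm\nu)\nabla$ (a consequence of $\partial_{\bm\nu}\nabla-\bm M(\partial,\bm\nu)\nabla=\bm\nu\Delta$), applies it to $\bm E_{21}\propto\nabla_{\bm x}(\gamma_{k_1}-\gamma_{k_2})$ and $\bm E_{12}\propto\nabla_{\bm x}(\gamma_{k_1}-\gamma_{k_2})$, uses the Helmholtz equations, and combines the $\bm\nu$-terms with the $E_{22}$-contribution to get the $k_p^2$ coefficient. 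Your G\"unter splitting $\bm T=2\mu\bm M+(\lambda+2\mu)\bm\nu\,\mathrm{div}-\mu\,\bm\nu\times\mathrm{curl}$ reduces to exactly this identity when applied to a gradient, so the two arguments are the same underneath; you have simply spelled out what the paper delegates to the reference.
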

\begin{proof}
Since the kernel of the operator $\bm V_{4,11}^w$ is similar as the operator $N_1$ in \cite{ZXY21}, following the proof of \cite[Lemma 3.3]{ZXY21} and carefully treating the tangential derivative $D_s(w\bm\phi)$ yield the regularized formulation (\ref{RegV311}). Note that
\ben
\bm T(\partial ,\bm\nu )\nabla  = (\lambda  + 2\mu )\bm \nu \Delta  + 2\mu \bm M(\partial ,\bm\nu )\nabla ,
\enn
since
\ben
{\partial _{\bm\nu} }\nabla  - \bm M(\partial ,\bm\nu )\nabla  = \bm \nu \Delta .
\enn
Then we have
\begin{align*}
\bm V_{4,12}^w[\psi](x)=& -\frac{\gamma k_p^2}{k_1^2 - k_2^2}\int_\Gamma \bm\nu_{\bm x}\left[\gamma_{k_1}(\bm x,\bm y)- \gamma_{k_2}(\bm x,\bm y)\right] \frac{\psi(\bm y)}{w(\bm y)}ds_{\bm y}\\ 
& +\frac{2\mu\gamma}{(\lambda  + 2\mu )(k_1^2 - k_2^2)}D_{s_{\bm x}}\int_\Gamma \mathbb{A}\nabla_{\bm x}\left[\gamma_{k_1}(\bm x,\bm y) - \gamma _{k_2}(\bm x,\bm y)\right]\frac{\psi(\bm y)}{w(\bm y)}ds_{\bm y},
\end{align*}
and
\begin{align*}
\bm V_{4,21}^w[\bm\phi](x)=& -\frac{i\omega\eta k_p^2}{k_1^2 - k_2^2}\int_\Gamma \left[\gamma_{k_1}(\bm x,\bm y)- \gamma_{k_2}(\bm x,\bm y)\right]\bm\nu_{\bm y}^\top \bm\phi(\bm y)w(\bm y)ds_{\bm y}\\
& -\frac{2i\omega\eta\mu}{(\lambda  + 2\mu )(k_1^2 - k_2^2)}\int_\Gamma \nabla_{\bm x}^\top\left[\gamma_{k_1}(\bm x,\bm y) - \gamma _{k_2}(\bm x,\bm y)\right] \mathbb{A}\frac{D_s^w(\bm\phi)(\bm y)}{w(\bm y)}ds_{\bm y},
\end{align*}
which completes the proof.
\end{proof}

\section{Numerical experiments}
\label{sec:4}

\subsection{Numerical implementations}
\label{sec:4.1}

We utilize the Nystr\"om-type method~\cite{BL12,BXY21} to numerically discretize the integral operators \jp{$\bm{V}_i^{w} , i = 1,2,3,4 $} on the basis of the Chebyshev quadrature points $\theta_n=\frac{\pi (2n+1)}{2N},\,\, n=0,1,...,N-1$, and truncated Chebyshev expansions of the smooth densities. Without loss of generality, we assume a smooth parameterization $\jp{\bm x} (t)=(x_1 (t),x_2(t)),\,\,t\in [-1,1]$ of $\Gamma$ satisfying $|\jp{\bm x'}(t)|=|d\jp{\bm x}(t)/dt|>0$, and choose
the weight $w$ so that $w(\jp{\bm x}(t))=\sqrt{1-t^2}$. Hence, the operator $\jp{\bm{V}_1^w}$ will be given by \ben
\jp{\bm{V}_1^w}[\bm \phi](t)=\int_{-1}^1
\jp{\mathbb{E}^\top(\bm x(t),\bm x(\tau))}\frac{\bm \phi(\bm x(\tau))}{\sqrt{1-\tau^2}}|\bm x'(\tau)|d\tau. \enn
Introducing the change of variables $t=\cos\theta$ and
$\tau=\cos\theta'$, we
obtain the periodic weighted operator
\be
\jp{\bm{V}_1^w}[\bm \phi](\theta)=\int_0^\pi
\mathbb{E}^\top(\bm x(\cos\theta),\bm x(\cos\theta'))\widetilde{\bm \phi}(\theta')|\bm x'(\cos\theta')|d\theta',
\quad \widetilde{\bm \phi}(\theta')=\bm \phi(\bm x(\cos\theta')).  \en Using the series expansions of the Hankel functions  \cite{CK13}, we can
obtain the decomposition
\be \label{anal} \jp{\mathbb{E}}(\bm x(t),\bm x(\tau))=\mathbb{E}^{1}(t,\tau)\log|t-\tau|+\mathbb{E}^{2}(t,\tau),  \en
where $\mathbb{E}^1(t,\tau)$ and $\mathbb{E}^2(t,\tau)$ are analytic. The detailed expressions of $\mathbb{E}^1$ and $\mathbb{E}^2$ are presented in the appendix.

It should be noted that, since the wave numbers 
$k_1,k_2$
  are complex, the terms 
$\mathbb{E}^{1}$ and $\mathbb{E}^{2}$
  increase exponentially with frequency 
$\omega$. An alternative regularization technique involves selecting only a few terms from the power series of 
$\mathbb{E}^1(t,\tau)$, which results in new functions 
$\mathbb{E}^1(t,\tau), \mathbb{E}^2(t,\tau)$ increasing polynomially with frequency. However, this approach means the overall scheme will not converge spectrally, but instead with a fixed order, determined by the number of terms selected in the regularization. For this work, we will limit ourselves to low- and medium-frequency regimes and therefore we use the traditional regularization scheme.

Using the Chebyshev points
$\left\{\theta_n=\frac{\pi(2n+1)}{2N}\right\}, n=0,1,\cdots,N-1$, gives
rise to a spectrally convergent cosine representation for smooth,
$2\pi$-periodic and even function $ \widetilde{\bm \phi}$ as \be
\label{periodicexp}
\jp{
\widetilde{\bm \phi}(\theta)=\sum_{n=0}^{N-1}\begin{pmatrix}
a^1_n\\ a^2_n
\end{pmatrix} e_n(\theta), \quad \begin{pmatrix}
a^1_n\\ a^2_n
\end{pmatrix}=\frac{2-\delta_{0n}}{N}\sum_{j=0}^{N-1}\widetilde{\bm \phi}(\theta_j)\cos(n\theta_j),}
\en
where $e_n(\theta) = \cos(n\theta)$. \jp{From \cite{BH07,YS88}, we have that}
\be
\label{symmpro}
-\frac{1}{2\pi}\int_0^\pi \log|\cos\theta-\cos\theta'|e_n(\theta')d\theta' =\lambda_ne_n(\theta), \quad \lambda_n=\begin{cases}
\frac{\log2}{2}, & n=0, \\
\frac{1}{2n}, & n\ge 1.
\end{cases}
\en
Applying equation (\ref{symmpro}) to each term of expansion (\ref{periodicexp}) we can obtain the well-known spectral quadrature rule for the logarithmic kernel
\be
\label{sqr}
\int_0^\pi \log|\cos\theta-\cos\theta'|\widetilde{\bm \phi}(\theta')d\theta' \jp{\approx} \frac{\pi}{N}\sum_{j=0}^{N-1}\widetilde{\bm \phi}(\theta_j)R_j^{(N)}(\theta),
\en
where
\be
\label{RjN}
R_j^{(N)}(\theta)=-2\sum_{m=0}^{N-1}(2-\delta_{0m})
\lambda_m\cos(m\theta_j)\cos(m\theta).  \en Together with the
trapezoidal integration for smooth periodic functions we  obtain a
spectrally quadrature approximation of the operator $\bm{V}_1^{w}$ as \be
\label{sqaS}
\bm{V}_1^{w}[\bm \phi](\theta)\approx  \frac{\pi}{N}\sum_{j=0}^{N-1} |\bm x'(\cos\theta_j)| \left[\mathbb{E}^1 (\cos\theta,\cos\theta_j)R_j^{(N)}(\theta) + \mathbb{E}^2 (\cos\theta,\cos\theta_j)\right]\widetilde{\bm \phi}(\theta_j).
\en
Then we can evaluate $\bm{V}_1^w[\bm \phi]$ in the sets of quadrature points $\{\theta_n, n=0,\cdots,N-1\}$ by means of a matrix-vector multiplication involving the \jp{block}-matrix ${\bf V}_1^w$ whose elements are defined by
\ben
[{\bf V}_1^w]_{ij}=\frac{\pi}{N}|\bm x'(\cos\theta_j)| \left[\mathbb{E}^1 (\cos\theta_i,\cos\theta_j)R_j^{(N)}(\theta_i) + \mathbb{E}^2 (\cos\theta_i,\cos\theta_j)\right],
\enn
in which the quantities $R_j^{(N)}(\theta_i)$ can be evaluated through
\ben
R_j^{(N)}(\theta_i)=R^{(N)}(|i-j|)+R^{(N)}(i+j+1),
\enn
where
\ben
R^{(N)}(l)=-\sum_{m=0}^{N-1} (2-\delta_{0m})\lambda_m\cos\left(\frac{lm\pi}{N}\right), \quad l=0,\cdots,2N-1.
\enn

In order to discretize the operators $\bm{V}_i^w,i=2,3,4$, we use the regualarized technique presented in Section~\ref{sec:3.3}. From the regualarized formulations given in Theorem~\ref{regV3} and in the appendix, the numerical evaluations of the operators $\bm{V}_i^w, i=2,3,4$ reduce to calculate the operators with the following forms
\be \bm{S}_1, \quad D_s \bm{S}_2 D_s^w, \quad \bm{S}_3 D_s^w, \quad D_s \bm{S}_4, \en
where $\bm{S}_i,\,i=1,2,3,4$ are integral operators with kernel of logarithmic type (which can be treated like $\bm{V}_1^w$), and
\begin{align} D_s[\alpha](\theta)=-\frac{1}{|\bm x'(\cos\theta)|}\frac{1}{\sin\theta} \frac{d\alpha(\theta)}{d\theta},
\end{align}
\begin{align}
  D_s^w[\alpha](\theta)=-\frac{\sin\theta}{|\bm x'(\cos\theta)|} \frac{d\alpha(\theta)}{d\theta} - \frac{\alpha(\theta)\cos\theta}{|\bm x'(\cos\theta)|}. \end{align}
Note that $\alpha(\theta)$ is an even function on $[0,\pi]$ which can be extended to be an even $2\pi$-periodic function on $[-\pi,\pi]$ and then whose derivatives respect to $\theta$ can be evaluated by the FFT.

\subsection{Numerical examples}
\label{sec:4.2}

Now we present several numerical examples to demonstrate the accuracy and efficiency of the proposed regularized BIE methods for solving the thermo-elastic scattering by open-arcs. We fix the parameters,  $\rho=1, \mu=1, \lambda=2,
\kappa=1, \eta=0.2$ and $\gamma=0.1$.  The boundary data for the different boundary condition will be given by 
\ben  \bm G_i=- \mathbb{B}_i(\pa,\nu) \bm U^{inc}, \,\, i=1,2,3,4,\enn
where
$\bm U^{inc}=(\bm u^{inc },0)$, with $\bm u^{inc}=\bm d_{inc}^{\bot }\mbox{exp}(i k_s \bm x \cdot \bm d_{inc})$ a plane incident shear wave with direction $\bm d_{inc}=(\cos \theta_{inc},\sin \theta_{inc})$, $\bm d_{inc}^{\bot}=(-\sin \theta_{inc}, \cos \theta_{inc})$ and $\theta_{inc}$ denoting the incident angle.

In what follows we will denote by $\mathrm{Solver}_i(\bm{V}_i^w)$ whenever we are using the  unregularized solvers to approximate the i-th BIE in \eqref{weighted} (i.e., solving the i-th boundary value problem), for $i=1,2,3,4$. Similarly we denote by  $\mathrm{Solver}_i(\bm{V}_{j_i}^w\bm{V}_{j_i-1}^w)$ when we use the regularized solvers, specified on the third column of Table~\ref{RsolrepHIE}, with $j_1=j_2=2,j_3=j_4=4$. All the formulations were implemented in Matlab using the expressions of Section \ref{sec:4.1}, while the resulting linear system were approximated using GMRES. 

% Using (\ref{weighted}) to solve the thermoelastic problems is denoted by $\mathrm{Solver}_j(\bm{V}_i^w)$ for $j$-th kind boundary condition ($j=1,2,3,4$). Corresponding to Table~\ref{RsolrepHIE}, the regularized solvers are denoted by $\mathrm{Solver}_1(\bm{V}_2^w\bm{V}_1^w)$, $\mathrm{Solver}_2(\bm{V}_2^w\bm{V}_1^w)$, $\mathrm{Solver}_3(\bm{V}_4^w\bm{V}_3^w)$, $\mathrm{Solver}_4(\bm{V}_4^w\bm{V}_3^w)$ for the four different boundary conditions, respectively. 
% All the numerical results are obtained by a Matlab implementation of BIEs in conjunction with the iterative linear algebra solver GMRES. Numerical errors of the near-field are evaluated at a circle with radius $R>0$ such that the open-arcs are included inside.

{\bf Example 1.} In our first example, we test the accuracy of the proposed method. Consider the thermoelastic scattering by a spiral-shaped arc which is parametrized by $\bm x(t)=e^{t}(\cos 5t,\sin 5t), t\in[-1,1]$.

The corresponding approximated solution are denoted by $\bm {U}_N$, where $N$ is the number of points of the discretization. We measure the relative errors of the near-field as,  
\ben 
\bm \epsilon_{\infty}:=\frac{ \max_{\bm x \in \bm C} | {\bm U}_{N}(\bm x) - {\bm U}_{N_K} (\bm x)|}{\max_{\bm x \in \bm C} |\bm U_{N_K} (\bm x)|},
\enn
for a (big) fixed value of $N_K=1600$, and different values of $N$. The points $\bm x $ are selected to be on a circle $\bm C =\{ \|\bm x\|=4, \bm x \in \mathbb{R}^2\}$ enclosing the open arc. The results, for $\theta_{inc} = 0$, $\omega=10,50$, and the different formulations (unregularized and regularized), are presented in  Table~\ref{Table2}  for the pure Dirichlet and Neumann cases, and Table~\ref{Table3} for the mixed boundary conditions. In all cases we observe fast convergence, achieving small errors. Particularly for the problems with traction boundary conditions ($i=2,4$), the regularized BIE approach can efficiently improve the accuracy.

Complementary to this example  we also present the near fields obtained from the regularized formulation for the scattering by the flat strip $[-1,1]\times \{0\}$ and the "fish-shaped arc"
\ben \bm x(t)=(1+0.7\cos1.6 \pi t)(\cos0.8\pi t,\sin0.8\pi t),\quad t\in[-1,1], \enn
in Figures~\ref{FigESol12}  and \ref{FigESol34}. 

% {\bf Example 1.} In our first example, we test the accuracy of the proposed method. Consider the thermoelastic scattering by a spiral-shaped arc which is characterized by $\bm x(t)=e^{t}(\cos 5t,\sin 5t), t\in[-1,1]$. Let $\bm U^{inc}=(\bm u^{inc},p^{inc})$ be a plane incident shear wave with $\bm u^{inc}=\bm d_{inc}^{\bot }\mbox{exp}(i k_s \bm x \cdot \bm d_{inc})$ and $p^{inc}=0$, where $\bm d_{inc}=(\mbox{cos}\,\theta_{inc},\mbox{sin}\,\theta_{inc})$ denotes the incident direction. Then the boundary data will be given by
% \ben  \bm G_i=- \mathbb{B}_i(\pa,\nu)\begin{pmatrix} \bm u^{inc} \\0 \end{pmatrix}, \,\, i=1,2,3,4.\enn
% Setting $\theta_{inc}=0$, Tables~\ref{Table2} and \ref{Table3} show the numerical errors using different solvers for $\omega=10$ and $\omega=50$, and fast convergence and spectral accuracy can be observed. Especially for the problems whose integral equations involving hyper-singular kernels, the regularized BIE approach can efficiently improve the accuracy. Near fields obtained from the regularized BIE solvers for the scattering by the flat strip $[-1,1]$ and the "fish-shaped arc"
% \ben \bm x(t)=(1+0.7\cos1.6 \pi t)(\cos0.8\pi t,\sin0.8\pi t),\quad t\in[-1,1], \enn
% are presented in Figures~\ref{FigESol12}  and \ref{FigESol34}. 

\begin{table}[htbp]\small
    \caption{Near-field errors for thermoelastic scattering by a spiral-shaped arc. GMRES tol: $10^{-12}$.}
    \centering
    \begin{tabular}{ccccccc}
    \hline
    $\omega$ & $N$ & $\mbox{Solver}_1(\bm{V}_1^w)$ & $\mbox{Solver}_2(\bm{V}_2^w)$ & $\mbox{Solver}_1(\bm{V}_2^w \bm{V}_1^w)$ & $\mbox{Solver}_2(\bm{V}_2^w \bm{V}_1^w)$  \\
    \hline
     10 & 100  & $3.01\times10^{-2}$  & $2.42\times10^{-1}$   & $3.01\times10^{-2}$ & $2.41\times10^{-1}$  \\
        & 150  & $1.81\times10^{-5}$  & $1.57\times10^{-3}$   & $1.81\times10^{-5}$ & $1.57\times10^{-3}$  \\
        & 300  & $4.75\times10^{-14}$  & $4.31\times10^{-9}$  & $1.41\times10^{-11}$ & $1.55\times10^{-12}$  \\
    \hline
     50 & 600  & $3.32\times10^{-3}$  & $2.23\times10^{-1}$   & $3.32\times10^{-3}$ & $1.81\times10^{-1}$  \\
        & 800  & $4.18\times10^{-10}$  & $1.64\times10^{-6}$  & $3.13\times10^{-10}$ & $5.89\times10^{-7}$ \\
        & 1000  & $3.30\times10^{-10}$ & $2.26\times10^{-8}$  & $1.18\times10^{-10}$ & $6.53\times10^{-10}$ \\
    \hline
    \end{tabular}
    \label{Table2}
    \end{table}

    \begin{table}[htbp]\small
      \caption{Near-field errors for thermoelastic scattering by a spiral-shaped arc. GMRES tol: $10^{-12}$.}
      \centering
      \begin{tabular}{ccccccc}
      \hline
      $\omega$ & $N$ & $\mbox{Solver}_3(\bm{V}_3^w)$ & $\mbox{Solver}_4(\bm{V}_4^w)$  & $\mbox{Solver}_3(\bm{V}_4^w \bm{V}_3^w)$ & $\mbox{Solver}_4(\bm{V}_4^w \bm{V}_3^w)$\\
      \hline
       10 & 100    & $2.99\times10^{-2}$ & $2.42\times10^{-1}$ & $2.99\times10^{-2}$ & $2.42\times10^{-1}$  \\
          & 150    & $1.81\times10^{-5}$ & $1.57\times10^{-3}$ & $1.81\times10^{-5}$ & $1.57\times10^{-3}$  \\
          & 300    & $8.86\times10^{-11}$ & $1.23\times10^{-9}$ & $3.43\times10^{-11}$ & $1.21\times10^{-13}$  \\
      \hline
       50 & 600    & $3.32\times10^{-3}$ & $1.82\times10^{-1}$ & $3.32\times10^{-3}$ & $1.82\times10^{-1}$  \\
          & 800    & $1.01\times10^{-8}$ & $5.91\times10^{-7}$ & $1.22\times10^{-9}$ & $5.91\times10^{-7}$ \\
          & 1000   & $1.07\times10^{-8}$ & $2.33\times10^{-9}$ & $9.10\times10^{-10}$ & $7.85\times10^{-10}$ \\
      \hline
      \end{tabular}
      \label{Table3}
      \end{table}
      
\begin{figure}[htbp]
  \centering
  \begin{tabular}{ccc}
  \includegraphics[scale=0.15]{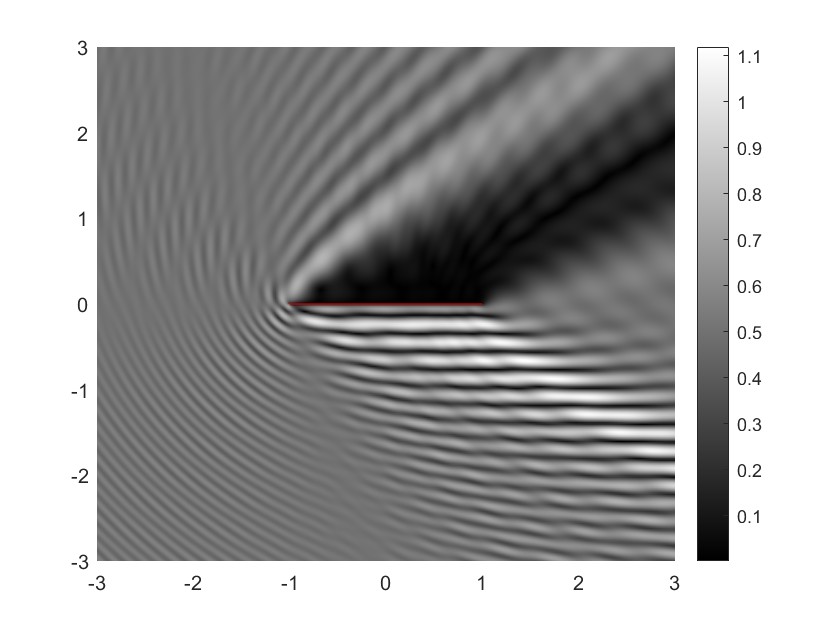} &
  \includegraphics[scale=0.15]{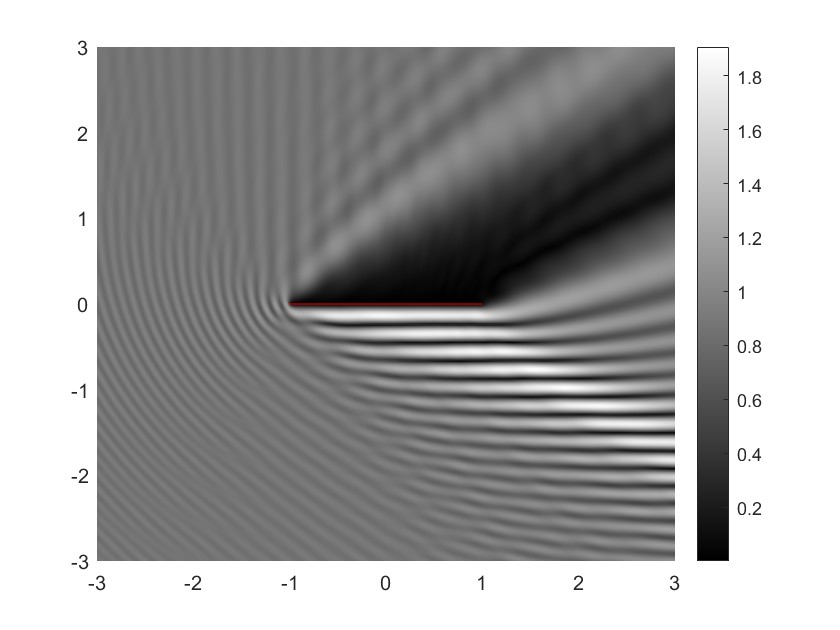} &
   \includegraphics[scale=0.15]{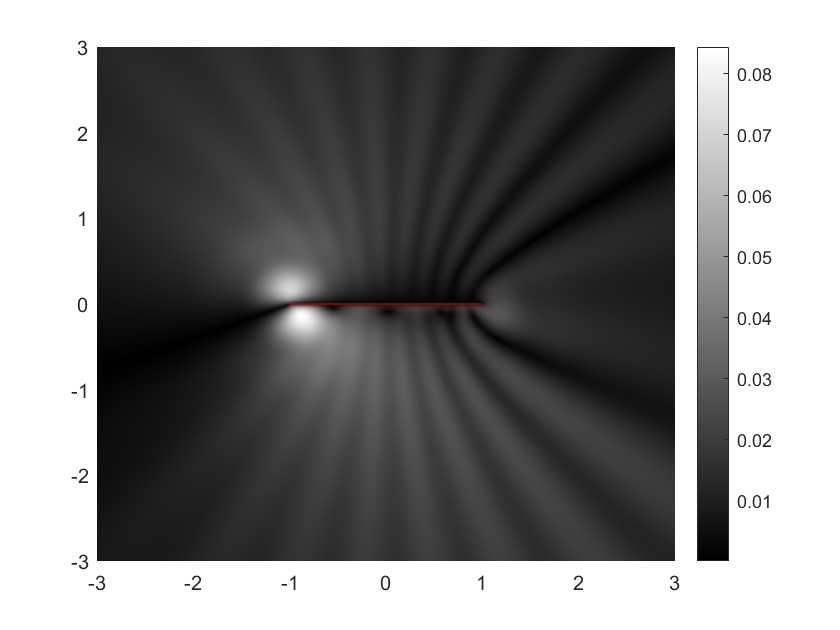}\\
  (a) $|u_1|, \theta_{inc}=\pi/6$ & (b) $|u_2|, \theta_{inc}=\pi/6$ & (c) $|p|, \theta_{inc}=\pi/6$\\
  \includegraphics[scale=0.15]{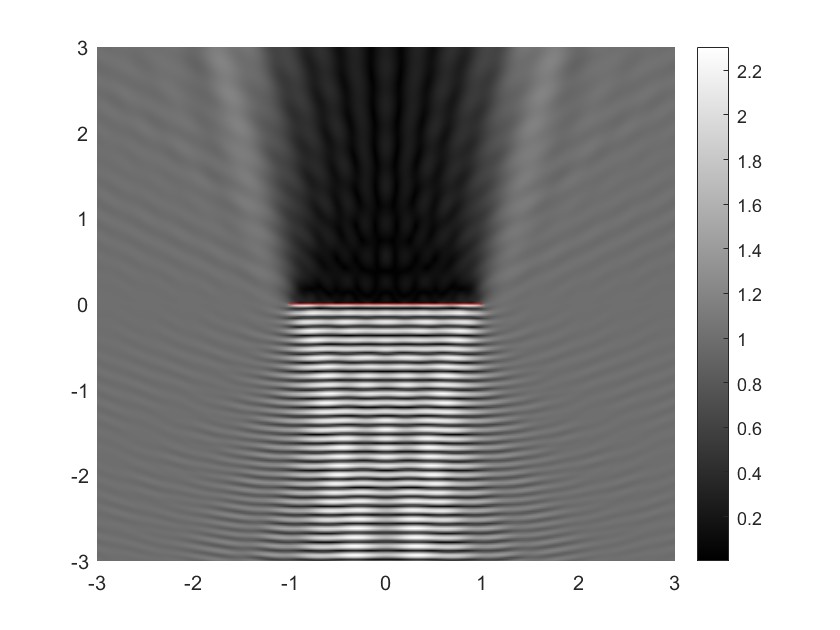} &
  \includegraphics[scale=0.15]{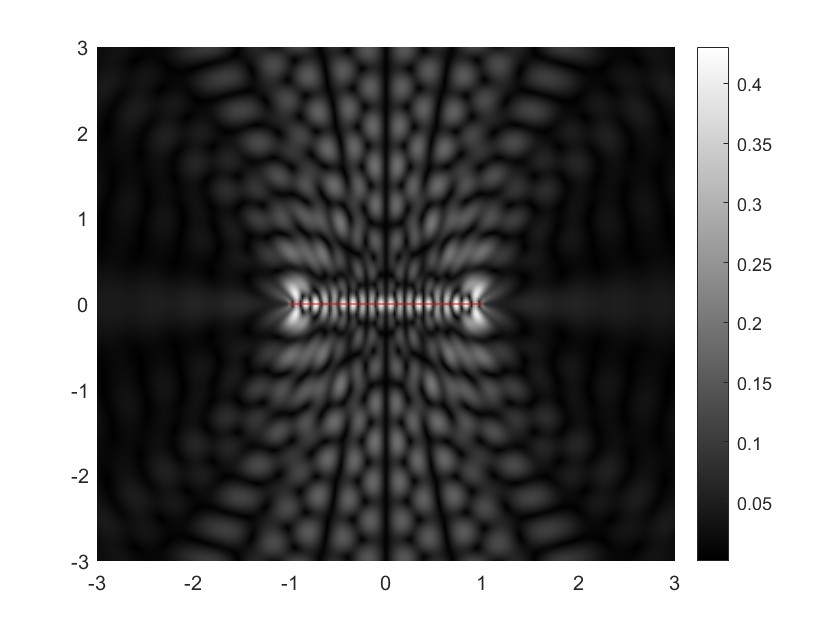} &
  \includegraphics[scale=0.15]{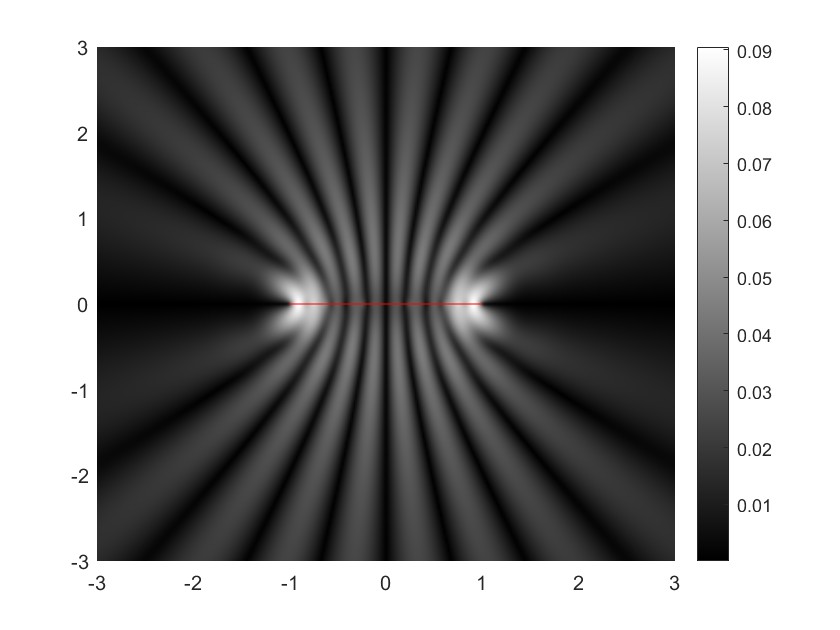} \\
  (d) $|u_1|, \theta_{inc}=\pi/2$ &
    (e) $|u_2|, \theta_{inc}=\pi/2$ &
    (f) $|p|, \theta_{inc}=\pi/2$ \\
  \end{tabular}
  \caption{Thermoelastic scattering by a flat strip with 1-st (a,b,c) and 2-nd (d,e,f) type boundary condition. ($\omega=30$)}
  \label{FigESol12}
  \end{figure}

\begin{figure}[htbp]
  \centering
  \begin{tabular}{ccc}
  \includegraphics[scale=0.15]{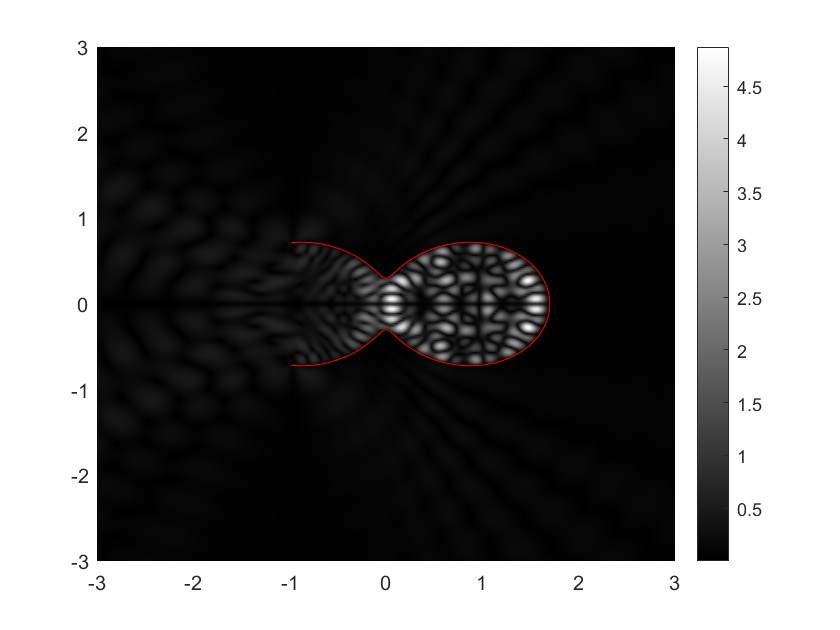} &
  \includegraphics[scale=0.15]{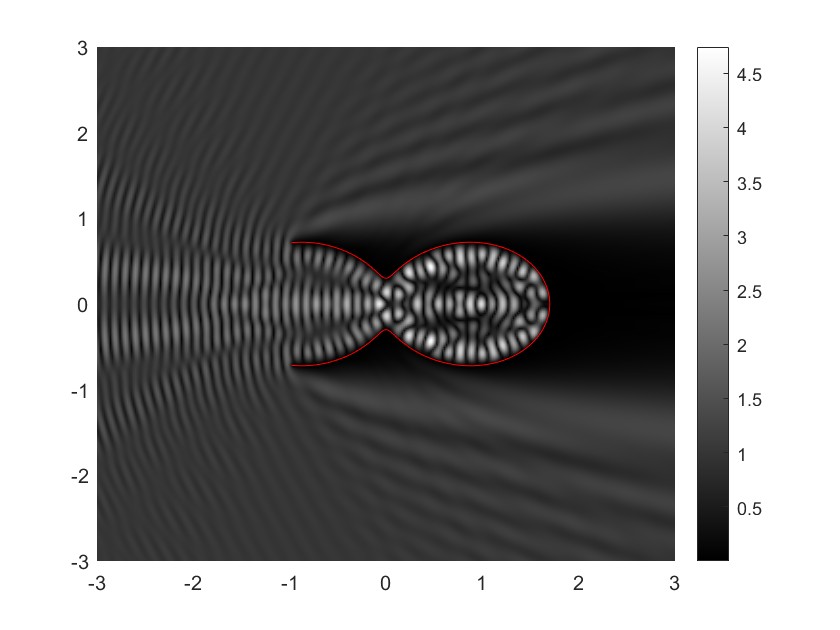} &
  \includegraphics[scale=0.15]{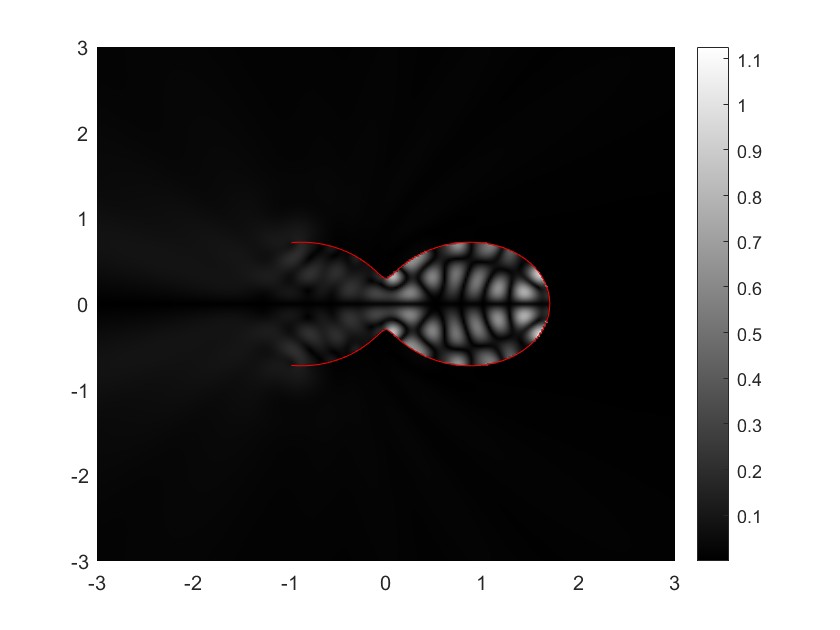} \\
  (a) $|u_1|, \theta_{inc}=0$ & (b) $|u_2|, \theta_{inc}=0$  & (c) $|p|, \theta_{inc}=0$\\
  \includegraphics[scale=0.15]{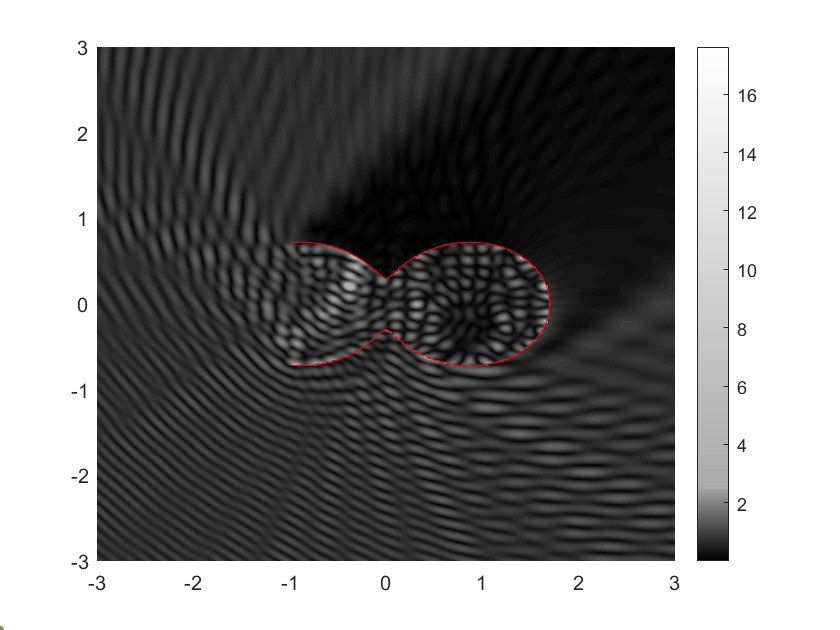} &
  \includegraphics[scale=0.15]{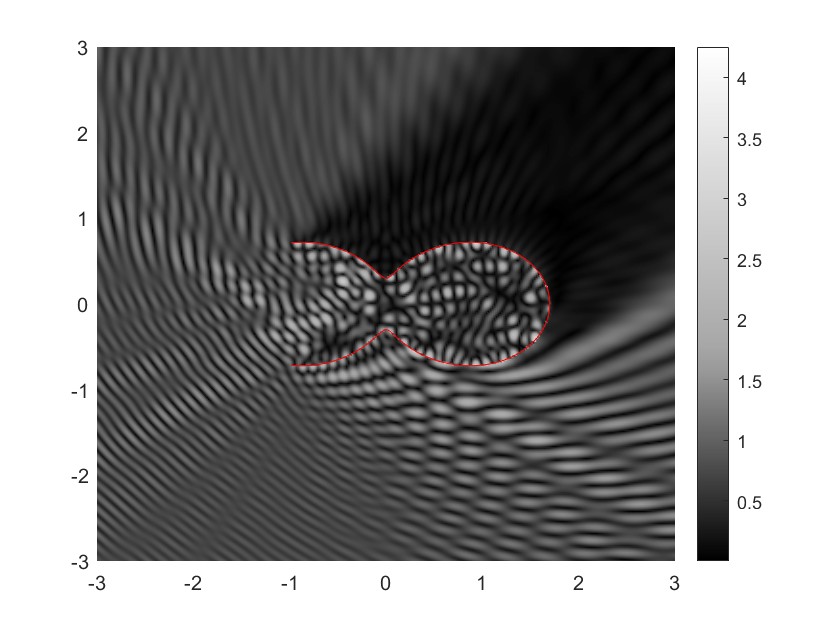} &
  \includegraphics[scale=0.15]{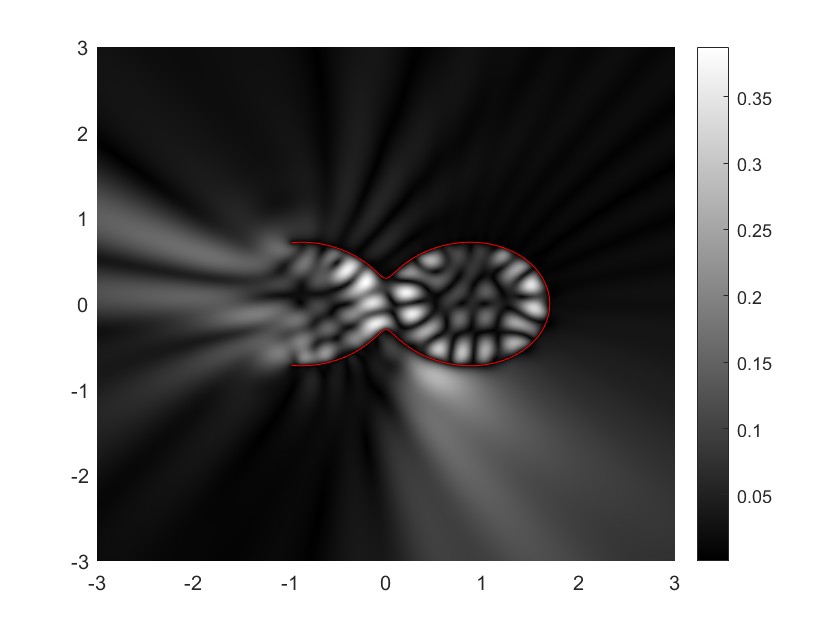} \\
 (d) $|u_1|, \theta_{inc}=\pi/4$ &
    (e) $|u_2|, \theta_{inc}=\pi/4$ &
    (f) $|p|, \theta_{inc}=\pi/4$ \\
  \end{tabular}
  \caption{Thermoelastic scattering by a "fish-shaped arc" with 3-rd (a,b,c) and 4-th (d,e,f) type boundary condition. ($\omega=30$)}
  \label{FigESol34}
  \end{figure}
  
{\bf Example 2.} For the second example we focus on showing the efficiency of the proposed regularized BIE solvers. We consider the flat strip and the "fish-shaped arc" geometries that were detailed at the end of the previous example.

Tables~\ref{Table4}-\ref{Table7} present the computation time and the number of GMRES iterations required to achieve a residual tolerance of $10^{-10}$ for the different formulations, with frequencies $\omega=10, 50$ and $\theta_{inc}=\frac{\pi}{4}$. In these tables, $T_p$ refers to the time for "precomputation"---which mainly amounts to the assembly time associated with the numerical evaluation of necessary matrices of the weakly singular integral operators as mentioned in section \ref{sec:4.1},  $N_{it}$ denotes the number of GMRES iterations and $T_{it}$  denotes GMRES time to perform all $N_{it}$ iterations.

We observe that for all the considered cases the  ``regularized'' solvers lead to a significant reduction in GMRES iterations vis-a-vis the numbers of iterations required for the  ``unregularized'' solvers.  Only for the pure Dirichlet case we observe that the total times ($T_p+T_{it}$) is higher for the regularized solver that for the unregularized one. The reason for this seems to be a combination of a relative less significant reduction on the GMRES iterations, and the added assambly required for the construction of the regularized. Therefore, it is suggested to utilize $\mathrm{Solver}_1(\bm V_1^w)$ to numerically solve the Dirichlet problem, which shows to be much faster than $\mathrm{Solver}_1(\bm V_2^w\bm V_1^w)$, in spite of the fact that $\mathrm{Solver}_1(\bm V_1^w)$ requires larger numbers of iterations than $\mathrm{Solver}_1(\bm V_2^w\bm V_1^w)$. For the 2nd, 3rd and 4th type boundary conditions, although the regularized solvers $\mbox{Solver}_2(\bm{V}_2^w \bm{V}_1^w)$, $\mbox{Solver}_3(\bm{V}_4^w \bm{V}_3^w)$ and $\mbox{Solver}_4(\bm{V}_4^w \bm{V}_3^w)$ requires to evaluate an additional operator compared with the unregularized solvers $\mathrm{Solver}_2(\bm V_2^w)$, $\mathrm{Solver}_3(\bm V_3^w)$ and $\mathrm{Solver}_4(\bm V_4^w)$, respectively, the regularized solvers require much fewer iterations than the unregularized solvers which shows that the formers are significantly faster overall. Hence, use of the regularized solvers is recommended for the 2nd, 3rd and 4th type boundary conditions.

\begin{table}[htbp]\small
      \caption{Number of iterations and computing time (seconds) required for the flat-strip thermoelastic scattering problem. GMRES tol: $10^{-10}$.}
      \centering
      \resizebox{\linewidth}{!}{
      \begin{tabular}{m{0.5cm}<{\centering}|m{0.8cm}<{\centering}|m{1cm}<{\centering}m{1cm}<{\centering}m{1cm}<{\centering}|m{1cm}<{\centering}m{1cm}<{\centering}m{1cm}<{\centering}|m{1cm}<{\centering}m{1cm}<{\centering}m{1cm}<{\centering}|m{1cm}<{\centering}m{1cm}<{\centering}m{1cm}<{\centering}}
      \hline
      &  & \multicolumn{3}{c|}{$\mbox{Solver}_1(\bm{V}_1^w)$} &  \multicolumn{3}{c|}{$\mbox{Solver}_1(\bm{V}_2^w \bm{V}_1^w)$} & \multicolumn{3}{c|}{$\mbox{Solver}_2(\bm{V}_2^w)$}    & \multicolumn{3}{c}{$\mbox{Solver}_2(\bm{V}_2^w \bm{V}_1^w)$} \\
      \hline
       $\omega$& N & $T_p$ & $N_{it}$ & $T_{it}$ & $T_p$ & $N_{it}$ & $T_{it}$ & $T_p$ & $N_{it}$ & $T_{it}$ & $T_p$ & $N_{it}$ & $T_{it}$\\
      \hline 
       10 & 150  &0.25  &35 &0.02        &0.5 &19 &0.1        &0.5 &48 &0.2        &0.5  &12  &0.1 \\
          & 300  &0.7  &35 &0.05        &1.2 &19 &0.25       &1.1 &69 &0.6        &1.1  &12  &0.2 \\
       \hline
       50 & 800  &4.1  &83 &0.6        &6.5 &47 &2.3        &5.9 &196 &7.5        &6.0  &33  &1.4 \\
          & 1000  &6.3  &83 &0.9        &9.9 &47 &3.2        &9.7 &219 &12.3        &9.3  &33  &2.2 \\

      \hline
      \end{tabular}
      }
      \label{Table4}
      \end{table}

    %   \begin{table}[htbp]\small
    %   \caption{Number of iterations and computing time (seconds) required for the fish-shaped arc thermoelastic scattering problem. GMRES tol: $10^{-10}$.}
    %   \centering
    %   \resizebox{\linewidth}{!}{
    %   \begin{tabular}{m{0.5cm}<{\centering}|m{0.8cm}<{\centering}|m{1cm}<{\centering}m{1cm}<{\centering}m{1cm}<{\centering}|m{1cm}<{\centering}m{1cm}<{\centering}m{1cm}<{\centering}|m{1cm}<{\centering}m{1cm}<{\centering}m{1cm}<{\centering}|m{1cm}<{\centering}m{1cm}<{\centering}m{1cm}<{\centering}}
    %   \hline
    %   &  & \multicolumn{3}{c|}{\mbox{Solver}_1(\bm{V}_1^w)$} &  \multicolumn{3}{c|}{$\mbox{Solver}_1(\bm{V}_2^w \bm{V}_1^w)$} & \multicolumn{3}{c|}{$\mbox{Solver}_2(\bm{V}_2^w)$}    & \multicolumn{3}{c}{$\mbox{Solver}_2(\bm{V}_2^w \bm{V}_1^w)$} \\
    %   \hline
    %   $\omega$& N & $T_p$ & $N_{it}$ & $T_{it}$ & $T_p$ & $N_{it}$ & $T_{it}$ & $T_p$ & $N_{it}$ & $T_{it}$ & $T_p$ & $N_{it}$ & $T_{it}$\\
    %   \hline
    %   10 & 150  &2.6  &112 &0.1        &5.9 &50 &0.2        &6.6 &216 &0.8        &6.2  &65  &0.25 \\
    %       & 300  &8.5  &126 &0.25        &19.6 &42 &0.4        &19.9 &286 &2.0        &19.8  &42  &0.3 \\
    %       \hline
    %   50 & 800  &49.8  &263 &2.8        &124.3 &132 &3.7        &126.7 &639 &21.4        &125.9  &134  &3.7 \\
    %       & 1000  &77.5  &263 &3.7        &193.7 &132 &5.3        &195.3 &725 &33.4        &194.7  &137  &5.4 \\

    %   \hline
    %   \end{tabular}
    %   }
    %   \label{Table5}
    %   \end{table}

\begin{table}[htbp]\small
      \caption{Number of iterations and computing time (seconds) required for the fish-shaped arc thermoelastic scattering problem. GMRES tol: $10^{-10}$.}
      \centering
      \resizebox{\linewidth}{!}{
      \begin{tabular}{m{0.5cm}<{\centering}|m{0.8cm}<{\centering}|m{1cm}<{\centering}m{1cm}<{\centering}m{1cm}<{\centering}|m{1cm}<{\centering}m{1cm}<{\centering}m{1cm}<{\centering}|m{1cm}<{\centering}m{1cm}<{\centering}m{1cm}<{\centering}|m{1cm}<{\centering}m{1cm}<{\centering}m{1cm}<{\centering}}
      \hline
       &  & \multicolumn{3}{c|}{$\mbox{Solver}_1(\bm{V}_1^w)$} &  \multicolumn{3}{c|}{$\mbox{Solver}_1(\bm{V}_2^w \bm{V}_1^w)$} & \multicolumn{3}{c|}{$\mbox{Solver}_2(\bm{V}_2^w)$}    & \multicolumn{3}{c}{$\mbox{Solver}_2(\bm{V}_2^w \bm{V}_1^w)$} \\
      \hline
       $\omega$& N & $T_p$ & $N_{it}$ & $T_{it}$ & $T_p$ & $N_{it}$ & $T_{it}$ & $T_p$ & $N_{it}$ & $T_{it}$ & $T_p$ & $N_{it}$ & $T_{it}$\\
      \hline
       10 & 150  &0.2  &123 &0.1        &0.5 &62 &0.25        &0.5 &230 &1.0       &0.5  &70  &0.3 \\
          & 300  &0.9  &136 &0.3        &1.1 &58 &0.4        &1.1 &333 &2.7        &1.1  &59  &0.4 \\
          \hline
       50 & 800  &3.7  &313 &3.5        &5.5 &171 &5.0        &5.3 &727 &25.7        &5.3  &174  &5.0 \\
          & 1000  &5.8  &313 &4.9        &8.7 &171 &7.4        &7.7 &827 &40.3        &8.0  &173  &7.0 \\

      \hline
      \end{tabular}
      }
      \label{Table5}
      \end{table}

%   \begin{table}[htbp]\small
%         \caption{Number of iterations and computing time (seconds) required for the flat-strip thermoelastic scattering problem. GMRES tol: $10^{-10}$.}
%         \centering
%         \resizebox{\linewidth}{!}{
%         \begin{tabular}{m{0.5cm}<{\centering}|m{0.8cm}<{\centering}|m{1cm}<{\centering}m{1cm}<{\centering}m{1cm}<{\centering}|m{1cm}<{\centering}m{1cm}<{\centering}m{1cm}<{\centering}|m{1cm}<{\centering}m{1cm}<{\centering}m{1cm}<{\centering}|m{1cm}<{\centering}m{1cm}<{\centering}m{1cm}<{\centering}}
%         \hline
%         &  & \multicolumn{3}{c|}{$\mbox{Solver}_3(\bm{V}_3^w)$} &  \multicolumn{3}{c|}{$\mbox{Solver}_3(\bm{V}_4^w \bm{V}_3^w)$} &\multicolumn{3}{c|}{$\mbox{Solver}_4(\bm{V}_4^w)$}    & \multicolumn{3}{c}{$\mbox{Solver}_4(\bm{V}_4^w  \bm{V}_3^w)$} \\
        
%         \hline
%          $\omega$& N & $T_p$ & $N_{it}$ & $T_{it}$ & $T_p$ & $N_{it}$ & $T_{it}$ & $T_p$ & $N_{it}$ & $T_{it}$ & $T_p$ & $N_{it}$ & $T_{it}$\\
%         \hline 
%          10 & 150       &3.5 &161 &0.3        &11.8  &15  &0.05  &5.4  &49 &0.2       &7.3 &11 &0.1   \\
%             & 300       &11.4 &249 &1.0        &41.1  &15  &0.1  &17.2  &71 &0.4        &23.8 &13 &0.2   \\
%             \hline
%          50 & 800       &71.9 &783 &19.1        &206.0  &35  &1.0  &120.2  &186 &5.0        &171.1 &34 &1.3   \\
%             & 1000         &111.6 &895 &29.7        &356.2  &35  &1.9  &191.1  &207 &6.2        &249.3 &34 &1.5 \\

%         \hline
%         \end{tabular}
%         }
%         \label{Table6}
%         \end{table}

  \begin{table}[htbp]\small
        \caption{Number of iterations and computing time (seconds) required for the flat-strip thermoelastic scattering problem. GMRES tol: $10^{-10}$. (Data marked with $*$ indicates that GMRES did not converge to the specified tolerance because the method stagnated.)}
        \centering
        \resizebox{\linewidth}{!}{
        \begin{tabular}{m{0.5cm}<{\centering}|m{0.8cm}<{\centering}|m{1cm}<{\centering}m{1cm}<{\centering}m{1cm}<{\centering}|m{1cm}<{\centering}m{1cm}<{\centering}m{1cm}<{\centering}|m{1cm}<{\centering}m{1cm}<{\centering}m{1cm}<{\centering}|m{1cm}<{\centering}m{1cm}<{\centering}m{1cm}<{\centering}}
        \hline
        &  & \multicolumn{3}{c|}{$\mbox{Solver}_3(\bm{V}_3^w)$} &  \multicolumn{3}{c|}{$\mbox{Solver}_3(\bm{V}_4^w \bm{V}_3^w)$} &\multicolumn{3}{c|}{$\mbox{Solver}_4(\bm{V}_4^w)$}    & \multicolumn{3}{c}{$\mbox{Solver}_4(\bm{V}_4^w  \bm{V}_3^w)$} \\
        
        \hline
         $\omega$& N & $T_p$ & $N_{it}$ & $T_{it}$ & $T_p$ & $N_{it}$ & $T_{it}$ & $T_p$ & $N_{it}$ & $T_{it}$ & $T_p$ & $N_{it}$ & $T_{it}$\\
        \hline 
         10 & 150       &0.2 &176 &0.4        &0.6  &19  &0.1  &0.4  &210 &0.7       &0.5 &15 &0.1   \\
            & 300       &0.6 &311 &1.4       &1.1  &19  &0.2  &0.9  &310 &2.0        &1.1 &15 &0.2   \\
            \hline
         50 & 800       &4.1 &864 &23.0        &5.9  &47  &1.4  &5.1  &812 &27.3        &5.8 &36 &1.1   \\
            & 1000         &6.3 & 1024* &37.7        &8.9  &47  &2.0  &7.8  &911 &42.1        &9.5 &36 &1.6 \\

        \hline
        \end{tabular}
        }
        \label{Table6}
        \end{table}

    %   \begin{table}[htbp]\small
    %     \caption{Number of iterations and computing time (seconds) required for the fish-shaped arc thermoelastic scattering problem. GMRES tol: $10^{-10}$.}
    %     \centering
    %     \resizebox{\linewidth}{!}{
    %     \begin{tabular}{m{0.5cm}<{\centering}|m{0.8cm}<{\centering}|m{1cm}<{\centering}m{1cm}<{\centering}m{1cm}<{\centering}|m{1cm}<{\centering}m{1cm}<{\centering}m{1cm}<{\centering}|m{1cm}<{\centering}m{1cm}<{\centering}m{1cm}<{\centering}|m{1cm}<{\centering}m{1cm}<{\centering}m{1cm}<{\centering}}
    %     \hline
    %      &  & \multicolumn{3}{c|}{$\mbox{Mix1}(\bm{V}_3^w)$} &  \multicolumn{3}{c|}{$\mbox{Mix1}(\bm{V}_4^w \bm{V}_3^w)$} &\multicolumn{3}{c|}{$\mbox{Mix2}(\bm{V}_4^w)$}    & \multicolumn{3}{c}{$\mbox{Mix2}(\bm{V}_4^w  \bm{V}_3^w)$} \\

    %     \hline
    %      $\omega$& N & $T_p$ & $N_{it}$ & $T_{it}$ & $T_p$ & $N_{it}$ & $T_{it}$ & $T_p$ & $N_{it}$ & $T_{it}$ & $T_p$ & $N_{it}$ & $T_{it}$\\
    %     \hline
    %      10 & 150  &3.7 &249 &0.8        &7.0  &51  &0.2 &5.1  &343 &1.3       &7.2 &55 &0.2         \\
    %         & 300  &11.0 &404 &2.2        &23.0  &42  &0.3  &17.0  &603 &5.5        &23.7 &42 &0.3        \\
    %         \hline
    %      50 & 800 &67.5 &1024 &30.5        &146.0  &133  &3.7 &104.0  &1272 &55.2        &149.0 &137 &3.8         \\
    %         & 1000  &105.3 &1222 &50.5        &225.8  &133  &5.4 &161.5  &1452 &85.9        &228.9 &137 &5.5         \\

    %     \hline
    %     \end{tabular}
    %     }
    %     \label{Table7}
    %     \end{table}

      \begin{table}[htbp]\small
        \caption{Number of iterations and computing time (seconds) required for the fish-shaped arc thermoelastic scattering problem. GMRES tol: $10^{-10}$.}
        \centering
        \resizebox{\linewidth}{!}{
        \begin{tabular}{m{0.5cm}<{\centering}|m{0.8cm}<{\centering}|m{1cm}<{\centering}m{1cm}<{\centering}m{1cm}<{\centering}|m{1cm}<{\centering}m{1cm}<{\centering}m{1cm}<{\centering}|m{1cm}<{\centering}m{1cm}<{\centering}m{1cm}<{\centering}|m{1cm}<{\centering}m{1cm}<{\centering}m{1cm}<{\centering}}
        \hline
        &  & \multicolumn{3}{c|}{$\mbox{Solver}_3(\bm{V}_3^w)$} &  \multicolumn{3}{c|}{$\mbox{Solver}_3(\bm{V}_4^w \bm{V}_3^w)$} &\multicolumn{3}{c|}{$\mbox{Solver}_4(\bm{V}_4^w)$}    & \multicolumn{3}{c}{$\mbox{Solver}_4(\bm{V}_4^w  \bm{V}_3^w)$} \\

        \hline
         $\omega$& N & $T_p$ & $N_{it}$ & $T_{it}$ & $T_p$ & $N_{it}$ & $T_{it}$ & $T_p$ & $N_{it}$ & $T_{it}$ & $T_p$ & $N_{it}$ & $T_{it}$\\
        \hline
         10 & 150  &0.2 &251 &0.6        &0.5  &62  &0.2  &0.3  &348 &1.4       &0.6 &63 &0.3         \\
            & 300  &0.8 &412 &2.3        &1.2  &58  &0.5    &1.0  &640 &6.1       &1.2 &59 &0.4        \\
            \hline
         50 & 800 &3.8 &1058 &32.5        &5.6  &172  &5.0   &4.6  &1305 &58.0        &5.5 &174 &5.2         \\
            & 1000  &5.9 &1258 &53.3        &8.7  &171  &7.1   &7.3  &1486 &89.8        &8.4 &174 &7.1         \\

        \hline
        \end{tabular}
        }
        \label{Table7}
        \end{table}

% \section{Conclusion}
% \label{sec:5}

\section*{Acknowledgments}
This work is partially supported by the National Key R\&D Program of China, Grant No. 2024YFA1016000, the Strategic Priority Research Program of the Chinese Academy of Sciences, Grant No. XDB0640000, NSFC under grants 12171465 and 12288201, and ANID grant Fondecyt Iniciaci\'on N°11230248.

\section*{Appendix}
In this appendix, we first present the regularized formulations for $\bm V_i^w, i=2,3$. Using the notations in Section~\ref{sec:3.3}, it can be derived that the integral operators $\bm V_{2,11}^w$, $\bm V_{2,12}^w$, $\bm V_{2,21}^w$, $\bm V_{2,22}^w$ can be re-expressed as

\begin{align*}
    \bm V_{2,11}^w=&\bm V_{4,11}^w,\\
    \bm V_{2,12}^w=&\bm V_{2,12}^{w,1} + D_s \bm V_{2,12}^{w,2} D_s^w+ D_s \bm V_{2,12}^{w,3}, \\
    \bm V_{2,21}^w=&\bm V_{2,21}^{w,1} + D_s \bm V_{2,21}^{w,2} D_s^w +\bm V_{2,21}^{w,3} D_s^w, \\
   \bm V_{2,22}^w=&\bm V_{2,22}^{w,1} + D_s \bm V_{2,22}^{w,2} D_s^w,
\end{align*}
where $\bm V_{4,11}^w$ is defined in (\ref{RegV311}), and $\bm V_{2,12}^{w,1}[\psi]= \bm V_{2,12}^{1}[w \psi]$,
$\bm V_{2,12}^{w,2}[\psi]= \bm V_{2,12}^{2}[w^{-1}\psi]$,
$\bm V_{2,12}^{w,3}[\psi] =\bm V_{2,12}^{3}[w\psi]$,
$\bm  V_{2,21}^{w,1}[\bm \phi] =\bm  V_{2,21}^{1}[w \bm \phi]$,
$\bm  V_{2,21}^{w,2}[\bm \phi] =\bm  V_{2,21}^{2}[w^{-1}\bm \phi]$,
$\bm V_{2,21}^{w,3}[\bm \phi] =\bm V_{2,21}^{3}[w^{-1}\bm \phi]$,
$\bm V_{2,22}^{w,1} [\psi]= \bm V_{2,22}^{1} [w \psi]$,
$\bm V_{2,22}^{w,2} [\psi] =\bm V_{2,22}^{2} [w^{-1}\psi]$.
The integral operators 
\begin{align*}
    \bm V_{2,12}^{1}[\psi]=& \frac{\gamma k_p^{2} \bm\nu_{\bm x}}{k_1^{2}-k_2^{2}} \int_{\Gamma}  \pa_{ \bm\nu_{\bm y}}[\gamma_{k_1}(\bm x,\bm y)-\gamma_{k_2}(\bm x,\bm y)]\psi(\bm y)d s_{\bm y}, \\ 
    \bm V_{2,12}^{2}[\psi]=& \frac{2\mu \gamma}{(k_1^{2}-k_2^{2})(\lambda+2\mu)} \int_{\Gamma} \nabla_{\bm y} (\gamma_{k_1}(\bm x,\bm y)-\gamma_{k_2}(\bm x,\bm y))\psi(\bm y) d s_{\bm y} ,\\
    \bm V_{2,12}^{3}[\psi]=&-\frac{2\mu \gamma}{(k_1^{2}-k_2^{2})(\lambda+2\mu)}\int_{\Gamma} [k_1^{2}\gamma_{k_1}(\bm x,\bm y)-k_2^{2}\gamma_{k_2}(\bm x,\bm y)]\mathbb{A} \bm \nu_{\bm y} \psi(\bm y)d s_{\bm y},\\
   \bm  V_{2,21}^{1}[\bm \phi]=&\frac{i\omega\eta k_p^{2}}{k_1^{2}-k_2^{2}}\int_{\Gamma} \pa_{\nu_{\bm x}}[\gamma_{k_1}(\bm x,\bm y)-\gamma_{k_2}(\bm x,\bm y)]\bm \nu_{\bm y}^{\top}\bm \phi(\bm y)d s_{\bm y}, \\
   \bm  V_{2,21}^{2}[\bm \phi]=&\frac{2i\mu \omega\eta}{(k_1^{2}-k_2^{2})(\lambda+2\mu)}  \int_{\Gamma} \nabla_{\bm x}^{\top}[\gamma_{k_1}(\bm x,\bm y)-\gamma_{k_2}(\bm x,\bm y)]\bm \phi(\bm y) d s_{\bm y},\\
    \bm V_{2,21}^{3}[\bm \phi]=&-\frac{2i\mu \omega\eta}{(k_1^{2}-k_2^{2})(\lambda+2\mu)} \int_{\Gamma} [k_1^{2}\gamma_{k_1}(\bm x,\bm y)-k_2^{2}\gamma_{k_2}(\bm x,\bm y)]\bm \nu_{\bm x}^{\top}\mathbb{A} \bm \phi (\bm y) d s_{\bm y}, \\
    \bm V_{2,22}^{1} [\psi] =&-\frac{1}{k_1^{2}-k_2^{2}}\int_{\Gamma}[k_1^{2}(k_p^{2}-k_1^{2})\gamma_{k_1}(\bm x,\bm y)-k_2^{2}(k_p^{2}-k_2^{2})\gamma_{k_2}(\bm x,\bm y)]\bm\nu_{\bm x}^{\top}\bm\nu_{\bm y} \psi(\bm y) d s_{\bm y}, \\
    \bm V_{2,22}^{2} [\psi]=&-\frac{1}{k_1^{2}-k_2^{2}}\int_{\Gamma}[(k_p^{2}-k_1^{2})\gamma_{k_1}(\bm x,\bm y)-(k_p^{2}-k_2^{2})\gamma_{k_2}(\bm x,\bm y)] \psi(\bm y) d s_{\bm y},
\end{align*}  
are at most weakly-singular.
Similarly, the operator $\bm V_{3,12}^w,\bm V_{3,21}^w,\bm V_{3,22}^w$ can be re-expressed as
\begin{align*}
    \bm V_{3,12}^w=&\bm V_{3,12}^{w,1} +\bm V_{3,12} ^{w,2} D_s^w,\\
    \bm V_{3,21}^w=&\bm V_{3,21}^{w,1} + D_s \bm V_{3,21}^{w,2}, \\
   \bm  V_{3,22}^w=&\bm V_{2,22}^w,
\end{align*}
where  $\bm V_{3,12}^{w,1} [\psi]= \bm V_{3,12}^{1} [w\psi]$,
$\bm V_{3,12}^{w,2} [\psi] =\bm V_{3,12}^{2} [w^{-1}\psi]$,
$\bm V_{3,21}^{w,1} [\bm \phi] =\bm V_{3,21}^{1} [w^{-1}\bm \phi]$,
$\bm V_{3,21}^{w,2} [\bm \phi]= \bm V_{3,21}^{2} [w^{-1}\bm \phi]$,
and the integral operators
\begin{align*}
    \bm V_{3,12}^{1} [\psi] =&\frac{\gamma}{(k_1^{2}-k_2^{2})(\lambda+2\mu)} \int_{\Gamma} (-k_1^{2}\gamma_{k_1}(\bm x,\bm y)+k_2^{2}\gamma_{k_2}(\bm x,\bm y))\psi(y)\bm\nu_{\bm y}  d s_{\bm y}, \\
    \bm V_{3,12}^{2} [\psi] =&-\frac{\gamma}{(k_1^{2}-k_2^{2})(\lambda+2\mu)}\int_{\Gamma} \mathbb{A} \nabla_{\bm y} (\gamma_{k_1}(\bm x,\bm y)-\gamma_{k_2}(\bm x,\bm y))\psi(\bm y) d s_{\bm y}, \\
    \bm V_{3,21}^{1} [\bm \phi] =&\frac{i\omega \eta}{(k_1^{2}-k_2^{2})(\lambda+2\mu)} \int_{\Gamma} (-k_1^{2}\gamma_{k_1}(\bm x,\bm y)+k_2^{2}\gamma_{k_2}(\bm x,\bm y))\bm\nu_{\bm x}^{\top}  \bm \phi(\bm y)d s_{\bm y}, \\
    \bm V_{3,21}^{2} [\bm \phi] =&-\frac{i\omega\eta}{(k_1^{2}-k_2^{2})(\lambda+2\mu)}  \int_{\Gamma} \nabla_{\bm x}^{\top}(\gamma_{k_1}(\bm x,\bm y)-\gamma_{k_2}(\bm x,\bm y))\mathbb{A}  \bm \phi(\bm y)d s_{\bm y},
\end{align*}
are all at most weakly-singular.

Finally, we present the expressions of $\mathbb{E}^1(t,\tau)$ and $\mathbb{E}^2(t,\tau)$ in (\ref{anal}). 
 Using the series expansions of Bessel functions~\cite{CK13} , when $t\neq \tau$ we obtain
\begin{align*}
 \mathbb{E}_{11}^1(t,\tau)=&-\frac{1}{2\pi\mu}J_0 (k_s r)\mathbb{I}_2+\frac{1}{2\pi\rho \omega^2 r}[k_s J_1 (k_s r)-\frac{k_p^2-k_2^2}{k_1^2-k_2^2}k_1 J_1(k_1 r)+\frac{k_p^2-k_1^2}{k_1^2-k_2^2}k_2 J_1(k_2 r)]\\
 &-\frac{(\bm x(t)-\bm x(\tau))(\bm x(t)-\bm x(\tau))^{\top}}{2\pi\rho\omega^2 r^2}[k_s^2 J_2(k_s r)-\frac{k_p^2-k_2^2}{k_1^2-k_2^2}k_1^2 J_2 (k_1 r)+\frac{k_p^2-k_1^2}{k_1^2-k_2^2}k_2^2 J_2 (k_2 r)],  \\
 \bm E_{12}^1(t,\tau)=&\frac{i\omega \eta}{(k_1^2-k_2^2)(\lambda+2\mu)}\frac{1}{2\pi r}[k_1 J_1(k_1 r) -k_2 J_1(k_2 r)](\bm x(t)-\bm x(\tau)), \\
  \bm E_{21}^1(t,\tau)=&-\frac{ \gamma}{(k_1^2-k_2^2)(\lambda+2\mu)}\frac{1}{2\pi r}[k_1 J_1(k_1 r) -k_2 J_1(k_2 r)](\bm x(t)-\bm x(\tau)), \\
  E_{22}^1 (t,\tau)=&\frac{1}{2\pi}[\frac{1}{k_1^2-k_2^2}(k_p^2-k_1^2)J_0 (k_1 r)-(k_p^2-k_2^2) J_0 (k_2 r)],
 \end{align*}
with $r=|x(t)-x(\tau)|$, $J_n$ denoting Bessel functions of order $n$ and
\ben \mathbb{E}^2 (t,\tau)=\mathbb{E} (\bm x(t),\bm x(\tau))-\mathbb{E}^1 (t,\tau) \log|t-\tau|. \enn
When $t=\tau$, we have
\begin{align*}
\mathbb{E}_{11}^1 (t,t)=&-\frac{1}{2\pi\mu}\mathbb{I}_2+\frac{k_s^2-k_p^2}{4\pi\rho\omega^2}\mathbb{I}_2, \\
 \mathbb{E}_{11}^2 (t,t)=&\frac{i}{4\mu} [1+\frac{2i}{\pi}(\log \frac{k_s |\bm x'(t)|}{2}+C_e)]\mathbb{I}_2-\frac{i}{4\rho\omega^2}\frac{k_s^2-k_p^2}{2}[1+\frac{2i}{\pi}(\log|\bm x'(t)|+C_e)-\frac{i}{\pi}]\mathbb{I}_2 \\
 +&\frac{k_s^2-k_p^2}{4\pi\rho\omega^2}\frac{\bm x'(t)^{\top}\bm x'(t)}{|\bm x'(t)|^2}+\frac{1}{4\pi\rho\omega^2}(k_s^2\, \log\frac{k_s}{2}-\frac{k_p^2-k_2^2}{k_1^2-k_2^2}k_1^2\, \log\frac{k_1}{2}+\frac{k_p^2-k_1^2}{k_1^2-k_2^2}k_2^2 \,\log\frac{k_2}{2})\mathbb{I}_2,\\
 \bm E_{12}^1 (t,t)=&\bm E_{12}^2(t,t)=0, \\
 \bm E_{21}^1(t,t)=&\bm E_{21}^2(t,t)=0, \\
 E_{22}^1(t,t)=&-\frac{1}{2\pi}, \\
 E_{22}^2(t,t)=&\frac{i}{4}-\frac{C_e}{2\pi}+\frac{1}{2\pi}(\frac{k_p^2-k_1^2}{k_1^2-k_2^2}\log \frac{k_1 \vert x'(t) \vert}{2}-\frac{k_p^2-k_2^2}{k_1^2-k_2^2}\log \frac{k_2 \vert x'(t) \vert}{2}),
 \end{align*}
where $C_e$ is the Euler constant.

  \end{document}